\newcommand{\abs}[1]{\left\lvert #1 \right\rvert}
\tikzset{
    scale plot marks/.is choice,
    scale plot marks/false/.code={
        \def\pgfuseplotmark##1{\pgftransformresetnontranslations\csname pgf@plot@mark@##1\endcsname}
    },
    scale plot marks/true/.style={},
    scale plot marks/.default=true
}
\tikzstyle{vertex}=[circle, fill=black, inner sep=0pt, minimum size=5pt]
\tikzstyle{edge}=[fill=black!0, scale=0.8]
\tikzstyle{fixed}=[rectangle, fill={rgb:blue,2;black,2}, inner sep=0pt, minimum size=5pt]
\tikzstyle{input}=[regular polygon, regular polygon sides=3, rotate=180, fill={rgb:red,0;green,2;black,2}, inner sep=0pt, minimum size=8pt]
\tikzstyle{output}=[regular polygon, regular polygon sides=3, fill={rgb:red,2;black,2}, inner sep=0pt, minimum size=8pt]
\def\url@leostyle{
 \@ifundefined{selectfont}{\def\UrlFont{\sf}}{\def\UrlFont{\small\ttfamily}}}
\newcommand{\arcosh}{\operatorname{arcosh}}
\newtheorem{prop}{Proposition}[chapter]
\newtheorem{fact}[prop]{Fact}
\newtheorem{thm}[prop]{Theorem}
\newtheorem{corollary}[prop]{Corollary}
\theoremstyle{remark}
\newtheorem*{remarque}{Remark}
\newtheorem*{exemple}{Example}
\newtheorem*{question}{Question}
\theoremstyle{definition}
\newtheorem{definition}[prop]{Definition}
\DeclareRobustCommand*{\refkempemink}{\ref*{kempe-mink}}
\DeclareRobustCommand*{\refkempehyp}{\ref*{kempe-hyp}}
\DeclareRobustCommand*{\refkempesphere}{\ref*{kempe-sphere}}
\begin{document}

\selectlanguage{english}

\title{Universality theorems for linkages in homogeneous surfaces}
%the Minkowski plane, the hyperbolic plane and the sphere}
\author{Mickaël Kourganoff} %\footnote{UMPA, ENS Lyon \newline 46, allée d'Italie \newline 69364 Lyon Cedex 07 \newline France}}

\address{
UMPA, ENS de Lyon, \newline
46, allée d'Italie,  69364 Lyon Cedex 07  France}
\email{mickael.kourganoff@ens-lyon.fr}
% \date{2013}

\begin{abstract}
A mechanical linkage is a mechanism made of rigid rods linked together by flexible joints, in which some vertices are fixed and others may move. The partial configuration space of a linkage is the set of all the possible positions of a subset of the vertices. We characterize the possible partial configuration spaces of linkages in the (Lorentz-)Minkowski plane, in the hyperbolic plane and in the sphere. We also give a proof of a differential universality theorem in the Minkowski plane and in the hyperbolic plane: for any compact manifold $M$, there is a linkage whose configuration space is diffeomorphic to the disjoint union of a finite number of copies of $M$. In the Minkowski plane, it is also true for any manifold $M$ which is the interior of a compact manifold with boundary.
\end{abstract}

\maketitle

\chapter{Introduction and generalities}
A \emph{mechanical linkage} is a mechanism made of rigid rods linked together by flexible joints. 
Mathematically, we consider a linkage as a \emph{marked} graph: lengths are assigned to the edges, and some vertices are pinned down while others may move.

A \emph{realization} of a linkage $\mathcal{L}$ in a manifold $\mathcal{M}$ is a mapping which sends each vertex of the graph to a point of $\mathcal{M}$, respecting the lengths of the edges.  
The \emph{configuration space} $\mathrm{Conf}_\mathcal{M}(\mathcal{L})$ is the set of all realizations.
Intuitively, the configuration space is the set of all the possible states of the mechanical linkage. This supposes, classically, the ambient manifold $\mathcal M$ to have a Riemannian structure: thus the configuration space may be seen as the space of ``isometric immersions'' of the metric graph $\mathcal L$ in $\mathcal M$. 

Here we will always deal with (non-trivially) marked connected graphs, that is, a non-empty set of vertices have fixed realizations (in fact, when $\mathcal{M}$ is homogeneous, considering a linkage without fixed vertices only adds a translation factor to the configuration space). Hence, our configurations spaces will be compact even if $\mathcal M$ is not compact, but rather complete.

\section{Some historical background}  \label{sectHistorical}
Most existing studies deal with the special case where $\mathcal M$ is the Euclidean plane and some with the higher dimensional Euclidean case 
(see for instance~\cite{MR2455573} and ~\cite{king}). There are also studies about polygonal linkages in the standard 2-sphere (see \cite{kmsphere}), or in the hyperbolic plane (see~\cite{kmhyp}).

 \medskip

 {\noindent{\it Universality theorems.}
When $\mathcal M$ is the Euclidean plane $\mathbb E^2$, a configuration space is an algebraic set. This set is smooth for a generic length structure on the underlying graph.

Universality theorems tend to state that, playing with mechanisms, we get any algebraic set of $\mathbb{R}^n$, and any manifold, as a configuration space! In contrast, it is a hard task to understand the topology or geometry of the configuration space of a given mechanism, even for a simple one.

Universality theorems have been announced in the ambient manifold $\mathbb{E}^2$ by Thurston in oral lectures, and then proved by Kapovich and Millson in~\cite{mk}. They have been proved in $\mathbb{E}^n$ by King~\cite{king}, and in $\mathbb{R}P^2$ and in the 2-sphere by Mnëv (see~\cite{mnev} and~\cite{mk}). 
It is our aim in  the present article to prove them in the cases of: the hyperbolic plane $\mathbb{H}^2$, the sphere $\mathbb{S}^2$ and the (Lorentz-)Minkowski plane $\mathbb{M}$.
These are simply connected homogeneous pseudo-Riemannian surfaces (the list of such spaces includes in addition the Euclidean and the de Sitter planes). 
Then it becomes natural to ask whether universality theorems hold in a more general class of manifolds, for instance on Riemannian surfaces without a homogeneity hypothesis.

In order to be more precise, it will be useful to introduce partial configuration spaces:
for $W$ a subset of the vertices of $\mathcal L$, one defines $\mathrm{Conf}_{\mathcal{M}}^W(\mathcal L)$ as the set of realizations of the subgraph induced by $W$ that extend to realizations of $\mathcal L$. One has in particular a restriction map: 
$ \mathrm{Conf}_{\mathcal M} (\mathcal L) \to \mathrm{Conf}_{\mathcal M}^W (\mathcal L) $.

If $W = \{a\}$ is a vertex of $\mathcal L$, its partial configuration space is its \emph{workspace}, \emph{i.e.} the set of all its positions in $\mathcal M$ corresponding to realizations of $\mathcal L$.

\medskip \noindent{\it Euclidean planar linkages.}
Now regarding the algebraic side of universality, the history starts (and almost ends) in 1876 with the well-known Kempe's theorem~\cite{kempe}:

\begin{thm} Any algebraic curve of the Euclidean plane $\mathbb{E}^2$, intersected with a Euclidean ball, is the workspace of some vertex of some mechanical linkage.
\end{thm}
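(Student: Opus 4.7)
The plan is to realize the algebraic curve as a workspace by assembling a linkage that mechanically enforces the polynomial equation $P(x,y)=0$. The strategy is modular: first construct a library of elementary ``computational'' linkages, then use them to encode $P$ term by term. Since the linkage has bounded edge lengths, the workspace is automatically bounded, so recovering the curve only up to intersection with a Euclidean ball is natural.

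The elementary gadgets I would build are: (i) a Peaucellier--Lipkin inversor, giving a vertex constrained to a straight line segment (this lets me impose the final equation $z=0$ on a chosen coordinate); (ii) a rigid parallelogram, which realizes vector addition / translation of a segment by a prescribed vector; (iii) a pantograph, which rescales a vector by a fixed rational factor; and (iv) an \emph{angle adder}, a more delicate gadget built from similar triangles (or from nested parallelograms) which, given two rays from a common vertex making angles $\alpha$ and $\beta$ with a reference direction, produces a third ray making angle $\alpha+\beta$. The construction of these gadgets is essentially local and can be done explicitly; the pantograph and the parallelogram are classical, and the only substantive input is the Peaucellier inversor.

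Next I would reduce the polynomial constraint to a combination of these gadgets. Writing the distinguished vertex in polar coordinates $(r,\theta)$ about some origin, each monomial $x^ay^b$ expands via the standard trigonometric identities into a finite sum of terms of the form $c\,r^{a+b}\cos(k\theta+\varphi)$, with $k\in\mathbb Z$ and constants $c,\varphi$. Thus $P(x,y)$ becomes a finite sum $\sum_i \lambda_i\cos(\alpha_i)$ where each $\alpha_i$ is an integer linear combination of the angles of a few auxiliary edges in the linkage. Using angle adders I synthesize the directions $\alpha_i$; using pantographs and parallelograms I build, at a single auxiliary vertex, a point whose $x$-coordinate equals $P(x,y)$ (summing the cosine contributions via chained parallelograms). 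Finally, using a Peaucellier inversor I pin that $x$-coordinate to zero. The workspace of the original distinguished vertex is then exactly $\{P=0\}$, intersected with the ball in which all gadgets operate non-degenerately.

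The step I expect to be the main obstacle is the well-known flaw in Kempe's original argument: a parallelogram linkage can fold into an \emph{anti-parallelogram}, which realizes subtraction instead of addition of the intended vectors. This enlarges the configuration space and can make the projected workspace strictly bigger than $\{P=0\}$. To overcome this I would \emph{rigidify} every parallelogram by bracing it with extra bars (for instance by adding a second, coupled parallelogram so that the two together have only the intended one-parameter family of motions), following the strategy developed by Kapovich and Millson. A careful analysis is then needed to check two things simultaneously: that the rigidified gadgets still have the full intended range of motion inside the target ball, and that no spurious configuration of the full assembly projects to a point outside the curve. Managing these degenerations uniformly, and quantifying the size of the ball on which the whole construction is valid, is the technical heart of the proof.
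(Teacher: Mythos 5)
Your outline is essentially Kempe's original 1876 argument together with the Kapovich--Millson rigidification fix, and as such it can be made to work; but it is a genuinely different route from the one this paper takes. The paper does not reprove the Euclidean Kempe theorem (it cites~\cite{kempe} and~\cite{mk}), and in every geometry it treats, its method is the ``arithmetic on a line'' strategy: identify a fixed line with $\mathbb{R}$, build \emph{functional linkages} for the average, addition, the square function and multiplication (the last two via the Peaucellier inversor and the identity $x_1x_2=\frac14((x_1+x_2)^2-(x_1-x_2)^2)$), compose them into a polynomial linkage, and pin the outputs to $0$. What that buys, beyond avoiding trigonometry, is control of the restriction map via the notion of regular inputs, which is what upgrades Kempe's statement to the assertion that $\mathrm{Conf}(\mathcal L)\to A$ is a smooth finite trivial covering --- the form needed for the differential universality theorems. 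Your angle-adder route proves the workspace statement but does not obviously give that covering property without substantial extra bookkeeping.

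One concrete slip in your reduction: writing the distinguished vertex in polar coordinates $(r,\theta)$, a monomial $x^ay^b$ becomes a sum of terms $c\,r^{a+b}\cos(k\theta+\varphi)$ in which $r$ \emph{varies}, and none of your gadgets computes powers of $r$; so the passage to a finite sum $\sum_i\lambda_i\cos(\alpha_i)$ with constant coefficients does not follow as stated. The classical fix is Kempe's: attach the vertex to a two-bar arm of fixed lengths $a,b$, so that $x=a\cos\theta+b\cos\phi$ and $y=a\sin\theta+b\sin\phi$; then every monomial is a trigonometric polynomial in the two link angles $\theta,\phi$ with \emph{constant} coefficients, and your angle adders and translators apply. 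With that correction, and with the caveat that Kempe's additor (built from contra-parallelograms) has its own degenerate branchings beyond the parallelogram/anti-parallelogram flip you mention --- all of which must be braced, as carried out in full only much later --- your plan is sound.
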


This theorem has the following natural generalization, which we will call the \emph{algebraic universality theorem}, proved by Kapovich and Millson (see~\cite{mk}):
\begin{thm} \label{kempe-eucl} Let $A$ be a compact semi-algebraic subset (see Definition~\ref{semiAlg}) of $(\mathbb{E}^2)^n$ (identified with $\mathbb R^{2n}$). Then, $A$ is a partial configuration space $\mathrm{Conf}^W_{\mathbb{E}^2}(\mathcal L)  $  of some linkage $\mathcal L$ in $\mathbb E^2$. When $A$ is algebraic, one can choose $\mathcal L$ such that the restriction map $\mathrm{Conf}_{\mathbb{E}^2}(\mathcal L) \to A=
\mathrm{Conf}^W_{\mathbb{E}^2}(\mathcal L) $ is a smooth finite trivial covering.
\end{thm}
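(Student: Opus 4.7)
The plan is to follow the Kapovich--Millson strategy of building a library of \emph{functional linkages} (``gadgets'') in $\mathbb{E}^2$ that carry out the elementary operations of scalar arithmetic on coordinates of selected output vertices, then combining them so that the joint partial configuration space realizes an arbitrary polynomial system, and finally handling inequalities to pass from algebraic to semi-algebraic sets.

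First I would fix a large disk $D \subset \mathbb{E}^2$ in which everything takes place, and design gadget linkages with distinguished ``input'' and ``output'' vertices, each of whose partial configuration space on those vertices is exactly the graph of a prescribed function. The classical toolbox suffices: the rigidified parallelogram (translator) and the pantograph to copy, translate, and scale vectors; the Peaucellier--Lipkin inversor and its variants to realize geometric inversion, and from it the ``straight-line motion'' constraint forcing a vertex to lie on a given line; an adder gadget built from a parallelogram (the midpoint of two input vertices), combined with scaling gives $z = x + y$; and a multiplier gadget built by combining an inverter with an adder (using the Kempe identity $4xy = (x+y)^2 - (x-y)^2$ together with a squarer obtained from the inversor). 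Each gadget will be designed so that, once the inputs and outputs are fixed, only finitely many positions remain for the auxiliary vertices, and these form a trivial finite cover over the open set of admissible inputs; this is the key rigidity property that must be checked gadget by gadget and will be the main technical obstacle, since it requires avoiding degenerate branches in each mechanism.

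Next I would assemble these gadgets to realize an arbitrary compact real algebraic set $A \subset (\mathbb{E}^2)^n$. Writing $A$ as the common zero set of finitely many real polynomials $P_1,\dots,P_k$ in the $2n$ coordinates, I would, for each $P_j$, iterate adder and multiplier gadgets to construct a linkage whose output vertex $q_j$ satisfies $q_j = P_j(\text{inputs})$; then I impose $q_j = 0$ by pinning it (or equivalently by a ``straight-line'' constraint to the origin together with a similar one in another direction). The final linkage $\mathcal L$ is obtained by amalgamating all these gadgets along their common input vertices (the $n$ output points in the plane that parametrize $A$). Since each gadget contributes a trivial finite cover on its own auxiliary vertices and the gadgets share only input vertices, the composed restriction map $\mathrm{Conf}_{\mathbb E^2}(\mathcal L) \to \mathrm{Conf}^W_{\mathbb E^2}(\mathcal L) = A$ is again a smooth finite trivial covering, proving the second assertion.

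Finally, for a general compact semi-algebraic $A$, I would use the standard trick that any such set can be written as a projection of a compact algebraic set obtained by replacing each inequality $P(x) \geq 0$ by the equation $P(x) = t^2$ in an extra auxiliary variable $t$, and each strict inequality similarly with a bounded auxiliary. Compactness of $A$ ensures that all such auxiliary variables remain bounded, so they can be realized by additional output vertices constrained to lie inside some large disk. Applying the algebraic case to this enlarged system produces a linkage whose full partial configuration space on the extended set of output vertices is algebraic, and restricting further to $W$ (forgetting the auxiliary vertices) yields $A$ as desired. Losing the covering property is inevitable here because the map $t \mapsto t^2$ is branched; this is why the trivial-covering statement is asserted only in the algebraic case.
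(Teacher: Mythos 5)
Your proposal is correct and follows essentially the same approach as the paper takes for this family of results (the paper itself cites Kapovich--Millson for the Euclidean case): build elementary functional linkages (translator/pantograph, Peaucellier inversor, adder, squarer/multiplier), combine them into a polynomial linkage whose outputs are pinned at the origin, check gadget-by-gadget that the restriction map is a finite covering that is trivial over the admissible inputs, and obtain the compact semi-algebraic case by writing $A$ as the projection of a compact algebraic set via slack variables $f_i = y_i^2$ (compactness bounding the auxiliaries), losing the covering statement exactly as in the theorem.
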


When $\mathrm{Conf}_{\mathbb{E}^2}(\mathcal L)$ is not a smooth manifold, as usual, by a smooth map on it, we mean the restriction of a smooth map defined on the ambient $\mathbb{R}^{2n}$.

From Theorem~\ref{kempe-eucl}, Kapovich and Millson easily derive the \emph{differential universality theorem} on the Euclidean plane:

\begin{thm} \label{univ-eucl} Any compact connected smooth manifold is diffeomorphic to one connected component of the configuration space of some linkage in the Euclidean plane $\mathbb{E}^2$. More precisely, there is a configuration space whose components are all diffeomorphic to the given differentiable manifold.
\end{thm}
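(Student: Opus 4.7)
The plan is to deduce this theorem from the algebraic universality theorem (Theorem~\ref{kempe-eucl}). Its ``more precisely'' clause says that when the target compact set $A$ is \emph{algebraic}, one obtains a linkage $\mathcal{L}$ whose full configuration space $\mathrm{Conf}_{\mathbb{E}^2}(\mathcal{L})$ is a smooth finite trivial covering of $A$. Since a trivial covering of a connected manifold $A$ is just a product $A \times F$ with $F$ a finite set, $\mathrm{Conf}_{\mathbb{E}^2}(\mathcal{L})$ is then a finite disjoint union of components, each diffeomorphic to $A$. Thus the whole theorem reduces to realizing the given compact connected smooth manifold $M$ as a compact nonsingular real algebraic subset $A$ of some $\mathbb{R}^{2n} = (\mathbb{E}^2)^n$.

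For this realization step I would invoke the Nash--Tognoli theorem: every closed smooth manifold is diffeomorphic to a compact nonsingular real algebraic subset of some Euclidean space $\mathbb{R}^N$. After embedding this $\mathbb{R}^N$ as an affine subspace of $\mathbb{R}^{2n}$ for $n$ large enough (pinning the extra coordinates to, say, $0$, which keeps the image algebraic), one obtains the desired compact nonsingular algebraic set $A \subset (\mathbb{E}^2)^n$ diffeomorphic to $M$.

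Applying Theorem~\ref{kempe-eucl} to $A$ then yields a linkage $\mathcal{L}$ in $\mathbb{E}^2$ together with a vertex subset $W$ such that $\mathrm{Conf}^W_{\mathbb{E}^2}(\mathcal{L}) = A$ and the restriction map $\mathrm{Conf}_{\mathbb{E}^2}(\mathcal{L}) \to A$ is a smooth finite trivial covering. By connectedness of $A \cong M$, this covering is a finite disjoint union of copies of $M$, which simultaneously proves the existence of one connected component diffeomorphic to $M$ and the sharper assertion that \emph{every} component is such.

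The main obstacle in this plan is the algebraic-realization step: Nash--Tognoli is a deep input from real algebraic geometry, external to the theory of linkages. A more elementary route via a Whitney embedding $M \hookrightarrow \mathbb{R}^N$ together with polynomial approximation of a smooth defining map of a tubular neighborhood would typically produce an algebraic set with parasitic extra components, and one would still need a mechanism to isolate $M$ among them --- which is essentially what Nash--Tognoli provides. Everything else in the argument is a formal consequence of Theorem~\ref{kempe-eucl}.
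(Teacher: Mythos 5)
Your proposal is correct and follows essentially the same route the paper indicates: combine the algebraic universality theorem (Theorem~\ref{kempe-eucl}) with the Nash--Tognoli realization of a compact manifold as a (nonsingular, compact) real algebraic set, and then use the smooth finite trivial covering to conclude that every component of $\mathrm{Conf}_{\mathbb{E}^2}(\mathcal{L})$ is diffeomorphic to $M$. Your closing remarks about nonsingularity and the embedding into $(\mathbb{E}^2)^n$ are exactly the points the paper delegates to Tognoli's theorem, so there is nothing to add.
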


Jordan and Steiner also proved a weaker version of this theorem with more elementary techniques (see~\cite{js}).

\medskip \noindent{\it How to go from the algebraic universality to the differentiable one?} The differentiable universality theorems (Theorems~\ref{univ-eucl}, \ref{univ-mink} and~\ref{univ-hyp}) follow immediately from the algebraic ones (Theorems~\ref{kempe-eucl}, \ref{kempe-mink} and~\ref{kempe-hyp}) once we know which smooth manifolds are diffeomorphic to algebraic sets. In 1952, Nash~\cite{nash} proved that for any smooth connected compact manifold $M$, one may find an algebraic set which has one component diffeomorphic to $M$. In 1973, Tognoli~\cite{tognoli} proved that there is in fact an algebraic set which is diffeomorphic to $M$ (a proof may be found in~\cite{ak}, or in \cite{bochnak}).

In the non-compact case (in which we will be especially interested), Akbulut and King~\cite{akbulutking} proved that every smooth manifold which is obtained as the interior of a compact manifold (with boundary) is diffeomorphic to an algebraic set. Note that conversely, any (non-singular) algebraic set is diffeomorphic to the interior of a compact manifold with boundary.

\section{Results} 
It is very natural to ask if these algebraic and differential universality theorems can be formulated and proved for configuration spaces in a general target space $\mathcal{M}$. Our results suggest this could be true: indeed, we naturally generalize universality theorems to the cases of $\mathcal{M}=$ $\mathbb{M}$, $\mathbb{H}^2$ and 
$\mathbb{S}^2$, the Minkowski and hyperbolic planes and the sphere, respectively. Notice that for a general $\mathcal{M}$, there is no notion of algebraic subset of $\mathcal{M}^n$! We will however
observe that there is a natural one in the cases we are considering here. In the general case, the question around Kempe's theorem could be rather formulated as: ``Characterize curves in $\mathcal{M}$ that are workspaces of some vertex of a linkage.''

\medskip \noindent{\it Minkowski planar linkages.}  These linkages are studied in Chapter~\ref{chapMink}. Classically, the structure of $\mathcal{M}$ needed to define realizations of a linkage is that of a Riemannian manifold. Observe however  that a distance, not necessarily of Riemannian type,  on $\mathcal{M}$ would also suffice for this task. But our idea here is instead  to relax positiveness of the metric. Instead of a Riemannian metric, we will assume $\mathcal M$ has a pseudo-Riemannian one.  We will actually restrict ourselves to the simple flat case where $\mathcal M$ is a linear space endowed with a non-degenerate quadratic form, and more specially to the 2-dimensional case, that is the Minkowski plane $\mathbb{M}$. On the graph side, weights of edges are no longer assumed to be positive numbers. This framework extension is mathematically natural, and may be related to the problem of the embedding of causal sets in physics, but the most important (as well as exciting) fact for us is that configuration spaces are (a priori) no longer compact, and we want to see what new spaces we get in this new setting.

The Lorentz-Minkowski plane $\mathbb{M}$ is $\mathbb R^2$ endowed with a non-degenerate indefinite quadratic form. We denote the ``space coordinate'' by $x$ and the ``time coordinate'' by $t$.

The configuration space 
$\mathrm{Conf}_{\mathbb{M}}(\mathcal L)$ is an algebraic subset (defined by polynomials of degree $2$)
of $\mathbb{M}^n = \mathbb R^{2n}$ ($n$ is the number of vertices of $\mathcal{L}$), and similarly a partial configuration space  $\mathrm{Conf}^W_{\mathbb{M}}(\mathcal L)$ is semi-algebraic (see Definiton~\ref{semiAlg}). In contrast to the Euclidean case, these sets may be non-compact (even if $\mathcal{L}$ has some fixed vertices in $\mathbb{M}$).  We will prove:

\begin{thm} \label{kempe-mink} Let $A$ be a semi-algebraic subset of $\mathbb M^n$ (identified with $\mathbb R^{2n}$). Then, $A$ is a partial configuration space 
$\mathrm{Conf}^W_{\mathbb{M}}(\mathcal L)$
of some linkage $\mathcal L$ in $\mathbb M$. When $A$ is algebraic, one can choose $\mathcal L$ such that the restriction map $\mathrm{Conf}_{\mathbb{M}}(\mathcal L) \to A$ is a smooth finite trivial covering.
\end{thm}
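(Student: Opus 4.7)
The plan is to follow the functional-linkage strategy of Kapovich and Millson, adapted to the Minkowski setting. The basic idea is to build a library of elementary \emph{functional linkages} in $\mathbb{M}$—linkages with designated input and output vertices realizing elementary algebraic operations—then compose them into a polynomial calculator, and finally use it to carve out $A$. First, reduce the general semi-algebraic set to a finite union of basic ones $A_\alpha = \{p_{\alpha,i} = 0,\ q_{\alpha,j} \geq 0\}$; by taking disjoint unions of linkages (glued at a common fixed vertex) one can treat each $A_\alpha$ separately.

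For the elementary linkages, the key ingredients are: (i) an \emph{adder} realizing $(a,b)\mapsto a+b$ (a parallelogram-type construction); (ii) a \emph{multiplier} realizing $(a,b)\mapsto ab$ (a similar-triangles or pantograph construction); (iii) constant-producing linkages; and (iv) a \emph{rectifier}, constraining a vertex to move along a given affine line (the Minkowski analogue of the Peaucellier inversor). Each piece must be redesigned in $\mathbb{M}$: one selects edges of prescribed squared pseudo-length—positive, negative, or null—so that the Minkowski-geometric constraints encode the desired algebraic identity. The existence of both spacelike and timelike directions provides flexibility, but also requires attention to which \emph{type} of line is being rectified.

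With these building blocks, one assembles a linkage $\mathcal{L}_p$ whose input vertex tuple ranges over $\mathbb{M}^n$ and whose output encodes $p(x)$ for a given polynomial $p$. Equalities $p(x)=0$ are enforced by pinning the output to a fixed point; inequalities $q(x)\geq 0$ are enforced by inserting an auxiliary \emph{square-root} vertex $s$ constrained by a rectifier to a spacelike line (so that $s^2\geq 0$ with every non-negative value attained) and demanding $q(x) = s^2$. Intersecting all such constraints yields the input workspace $A$. For the algebraic case, a genericity argument on the auxiliary edge lengths ensures that the ambient configuration space is smooth and that each elementary functional linkage contributes only a finite discrete choice of branch (a sign, a reflection, etc.), independent of the input; the total restriction map is then a smooth finite trivial covering of $A$.

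The main obstacle, I expect, lies in the construction of the multiplier and the rectifier in $\mathbb{M}$. The classical Peaucellier inversor relies on a Euclidean circle through the center of inversion; in $\mathbb{M}$ the analogue is a Minkowski "circle" (a hyperbola), and one must check that the inversion construction does produce genuine affine lines, while behaving uniformly on a sufficiently large open region and avoiding degeneracies near null directions. A second delicate point is the non-compactness of $\mathbb{M}$: partial configuration spaces need not be bounded, so one must verify that each functional linkage extends globally, with its input workspace covering the needed open subset of $\mathbb{M}^n$, and that the covering property of the restriction map persists on unbounded portions of $A$.
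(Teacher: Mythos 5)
Your overall strategy (functional linkages for $+$, $\times$, a Peaucellier-type rectifier for lines, slack squares for inequalities, then a polynomial calculator with outputs pinned to a point) is the same as the paper's, but two of your mechanisms have genuine gaps. First, the treatment of general semi-algebraic sets: writing $A=\bigcup_\alpha A_\alpha$ as a union of basic sets and ``taking disjoint unions of linkages glued at a common fixed vertex'' cannot work, because the configuration space of a disjoint union of linkages is the \emph{product} of the configuration spaces, and every partial configuration space of it is a product of projections --- there is no way to obtain a union of sets this way. The paper avoids this entirely: a semi-algebraic set is (by Tarski--Seidenberg) the projection of an algebraic set $B\subseteq(\mathbb{R}^2)^N$, so one builds the linkage for $B$ and simply removes the superfluous vertices from the input set, the projection being automatic for partial configuration spaces. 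If you want to keep your decomposition into basic sets, the union must be taken at the level of defining polynomials (e.g.\ multiplying equations) \emph{before} building any linkage, not at the level of linkages.

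Second, your route to the ``smooth finite trivial covering'' via a genericity argument on the auxiliary edge lengths is unsound. The functional identities of these linkages (the average/adder parallelogram, the exact length relations $R,r$ in the Peaucellier inversor, the rigidified square) hold only for \emph{exact} length choices; a generic perturbation destroys functionality, and conversely, with the exact lengths the degenerate configurations (stretched robotic arms, collapsed squares, inputs on the light cone where the inversion blows up) occur at specific input values and cannot be removed by perturbing lengths. The paper instead eliminates degeneracies by construction --- rigidifying squares, adding the extra vertex $g$ to the inversor so that $\phi(b)\neq\phi(c)$, and using an integer-offset linkage so that the square-function linkage (hence the multiplier) is regular over \emph{all} of $\mathbb{R}$, which is essential here since $A$ is not compact --- and then proves $\mathrm{Reg}^P_{\mathbb{M}}(\mathcal L)=\mathrm{Conf}^P_{\mathbb{M}}(\mathcal L)$ for each elementary piece and propagates this through combinations; triviality of the covering finally comes from the base being simply connected (the inputs range over a set identified with $(\mathbb{R}^2)^n$), not from genericity. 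Relatedly, you gloss over how input vertices ranging over $\mathbb{M}^n$ are interfaced with the calculator living on the line $t=0$: one needs line linkages and horizontal/vertical/diagonal parallelizers to convert between a point of $\mathbb{M}$ and the pair of real coordinates fed to the polynomial linkage, and this conversion must itself be regular everywhere.
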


Somehow, considering Minkowskian linkages is the exact way of realizing non-compact algebraic sets!
In particular, Kempe's theorem extends (globally, i.e. without taking the intersection with balls) to the Minkowski plane: any algebraic curve is the workspace of one vertex of some linkage.

\begin{remarque} If the restriction map is injective, then it is a bijective algebraic morphism from $\mathrm{Conf}_{\mathbb{M}}(\mathcal L)$ to $A$, but not necessarily an algebraic isomorphism.
 %between $\mathrm{Conf}_{\mathbb{M}}(\mathcal L)$ and $A$! 
 In fact, it is true for non-singular complex algebraic sets that bijective morphisms are isomorphisms, but this is no longer true in the real algebraic case (see for instance \cite{Mumford}, Chapter 3).
\end{remarque}

We also have a differential version of the universality theorem in the Minkowski plane (which follows directly from Theorem~\ref{kempe-mink}, as explained at the end of Section~\ref{sectHistorical}):

\begin{thm} \label{univ-mink} For any differentiable manifold $M$ with finite topology, i.e. diffeomorphic to the interior of a compact manifold with boundary, there is a linkage in the Minkowski plane with a configuration space whose components are all diffeomorphic to $M$. More precisely, there is a partial configuration space $\mathrm{Conf}_{\mathbb{M}}^W(\mathcal L)$ which is diffeomorphic to $M$ and such that the restriction map $\mathrm{Conf}_{\mathbb{M}}(\mathcal L) \to \mathrm{Conf}_{\mathbb{M}}^W(\mathcal L)$ is a smooth finite trivial covering.
\end{thm}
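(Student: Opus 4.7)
The strategy, as the introduction already foreshadows, is to reduce Theorem~\ref{univ-mink} to the algebraic universality theorem \ref{kempe-mink} combined with the Nash--Tognoli--Akbulut--King realization of smooth manifolds as non-singular real algebraic sets. No new linkage construction is needed here; the proof is essentially a packaging argument.

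First I would invoke the theorem of Akbulut and King~\cite{akbulutking}: since $M$ is diffeomorphic to the interior of a compact manifold with boundary, there exists a non-singular real algebraic set $A_0 \subset \mathbb{R}^N$ which is diffeomorphic to $M$. I would then embed $\mathbb{R}^N$ linearly into $\mathbb{R}^{2n} = \mathbb{M}^n$ for any $n \geq \lceil N/2 \rceil$, for instance by padding with zero coordinates. The image $A$ is still an algebraic subset of $\mathbb{M}^n$ (the defining equations of $A_0$ together with the vanishing of the extra coordinates are polynomial), and it remains non-singular and diffeomorphic to $M$.

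Next I would apply Theorem~\ref{kempe-mink} to this algebraic set $A \subset \mathbb{M}^n$. This produces a linkage $\mathcal{L}$ in the Minkowski plane and a subset $W$ of its vertices, with $|W|=n$, such that $\mathrm{Conf}^W_{\mathbb{M}}(\mathcal{L}) = A$ and such that the restriction map
\[
\pi\colon \mathrm{Conf}_{\mathbb{M}}(\mathcal{L}) \longrightarrow \mathrm{Conf}^W_{\mathbb{M}}(\mathcal{L}) = A
\]
is a smooth finite trivial covering. Triviality of the covering means $\mathrm{Conf}_{\mathbb{M}}(\mathcal{L})$ is diffeomorphic to $A \times F$ for some finite set $F$; in particular each connected component of $\mathrm{Conf}_{\mathbb{M}}(\mathcal{L})$ is diffeomorphic to a union of components of $A \cong M$, and since $A$ itself is diffeomorphic to $M$, every component of the total configuration space is diffeomorphic to $M$, as required.

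There is essentially no difficulty: Akbulut--King has done all the smoothing work on the algebraic side, and Theorem~\ref{kempe-mink} does all the mechanical work on the linkage side. The only point to check carefully is the first one, namely that dropping $A_0 \subset \mathbb{R}^N$ into $\mathbb{M}^n$ preserves the property of being a real algebraic subset and the diffeomorphism type; this is immediate because the embedding is linear and the extra defining equations are polynomial. The mildly new feature compared to the Euclidean statement (Theorem~\ref{univ-eucl}) is that here \emph{all} components of the total configuration space are diffeomorphic to $M$, but this is precisely what the triviality clause in Theorem~\ref{kempe-mink} provides.
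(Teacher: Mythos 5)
Your proposal is correct and follows essentially the same route as the paper: the paper itself only states that Theorem~\ref{univ-mink} ``follows directly from Theorem~\ref{kempe-mink}'' via the Akbulut--King realization of interiors of compact manifolds with boundary as (non-singular) algebraic sets, which is exactly your packaging of Akbulut--King plus the trivial-covering clause of Theorem~\ref{kempe-mink}. The only tiny imprecision is the phrase ``diffeomorphic to a union of components of $A$'' --- a component of the trivial covering $A \times F$ is a single component of $A$, so the conclusion that all components of $\mathrm{Conf}_{\mathbb{M}}(\mathcal L)$ are diffeomorphic to $M$ tacitly uses $M$ connected, the same convention implicit in the paper's statement.
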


\medskip \noindent{\it Hyperbolic planar linkages.} In Chapter~\ref{chapHyp}, we prove that both algebraic and differential universality theorems hold in the hyperbolic plane. The problem is that the notion of algebraic set has no intrinsic definition in the hyperbolic plane. However, it is possible to define an algebraic set in the Poincaré half-plane model $\left\{ \begin{pmatrix} x \\ y \end{pmatrix} \in \mathbb R^2 ~ \middle\arrowvert ~ y > 0 \right\}$ (and hence in $\mathbb H^2$) as an algebraic set of $\mathbb{R}^2$ which is contained in the half-plane. In fact, it turns out that the analogous definitions in the other usual models (the Poincaré disc model, the hyperboloid model, or the Beltrami-Klein model) are all equivalent. With this definition, we obtain the same results as in the Euclidean case:

\begin{thm} \label{kempe-hyp} Let $A$ be a compact semi-algebraic subset of $(\mathbb H^2)^n$ (identified with a subset of $\mathbb R^{2n}$ using the Poincaré half-plane model). Then, $A$ is a partial configuration space of some linkage $\mathcal L$ in $\mathbb H^2$. When $A$ is algebraic, one can choose $\mathcal L$ such that the restriction map $\mathrm{Conf}_{\mathbb{H}^2}(\mathcal L) \to A$ is a smooth finite trivial covering.
\end{thm}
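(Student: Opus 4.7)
The plan is to transport the Kapovich--Millson strategy from Theorem~\ref{kempe-eucl} to $\mathbb{H}^2$. The key preliminary observation is that in the Poincaré half-plane model the identity
\[
\cosh d(P,Q) \;=\; 1 + \frac{(x_P-x_Q)^2 + (y_P-y_Q)^2}{2\,y_P\,y_Q}
\]
turns every edge-length constraint into a polynomial equation in the Cartesian coordinates of its endpoints. Hence $\mathrm{Conf}_{\mathbb H^2}(\mathcal L)$ sits as an algebraic subset of $\mathbb{R}^{2n}$ contained in the product of upper half-planes, which makes the notion of ``algebraic subset of $(\mathbb H^2)^n$'' well-behaved and also tells us in which direction the universality proof should go: from a polynomial system $A$ we have to produce a linkage whose partial configuration space is exactly $A$.

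To achieve this I would build, as in the Euclidean case, a library of elementary \emph{functional linkages} in $\mathbb H^2$ that perform polynomial operations on the coordinates of a few input vertices whose motion is confined to a small hyperbolic disk $D$. In $\mathbb E^2$ the ingredients are the Peaucellier inverter (straight-line motion), the parallelogram linkage (parallel transport and addition) and the pantograph (scaling/multiplication), and one needs genuine hyperbolic analogs here. A Peaucellier-type construction still straightens motion onto a hyperbolic geodesic, while the role of Euclidean translations can be taken over by hyperbolic translations along fixed geodesics, composed so as to implement addition and multiplication exactly (rather than up to curvature). Compactness of $A$ lets us conjugate by a hyperbolic isometry so that $A \subset D^n$ for such a disk $D$, and all the constructions then take place inside $D$.

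Once these gadgets are available, the rest is bookkeeping. Any algebraic $A \subset (\mathbb H^2)^n$ is the common zero set of finitely many polynomials; each polynomial equation is imposed by pinning the ``output'' vertex of a composite functional linkage to $0$, and the partial configuration space of the input vertices is then $A$. By the nature of the gadgets, each configuration in $A$ has a fixed finite number of extensions inside $\mathrm{Conf}_{\mathbb H^2}(\mathcal L)$, which gives the smooth finite trivial covering statement. The semi-algebraic case reduces to the algebraic one by the classical ``squares'' trick: an inequality $p(x)\ge 0$ is enforced by adding auxiliary coordinates $y$ with $y^2 = p(x)$, realized by a further functional linkage, after which we forget the auxiliary vertices.

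The main obstacle, in my view, is the honest construction and verification of the elementary gadgets, since $\mathbb H^2$ admits neither a global parallelism nor a one-parameter family of dilations, so one cannot simply copy Euclidean parallelograms or pantographs. I would handle this by assembling hyperbolic versions from rigid sub-linkages built out of Peaucellier inverters and hyperbolic translations, then checking \emph{algebraically} via the cosh-formula above that the resulting polynomial constraints are precisely the required ones, not merely an approximation. Once a working set of hyperbolic adders, multipliers and rigidifiers is in hand, the rest of the argument is a direct transcription of the Euclidean proof of Theorem~\ref{kempe-eucl}.
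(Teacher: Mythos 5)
Your overall strategy is the same as the paper's (algebraize edge constraints via the $\cosh$-distance formula, build elementary functional linkages, compose them into a polynomial linkage, pin the outputs to impose $f=0$, and reduce the semi-algebraic case to the algebraic one by the squares trick), but two of your key steps have genuine gaps. First, the arithmetic gadgets. Composing hyperbolic translations along a fixed geodesic can at best add translation lengths; it gives you no route to multiplication, and there is no homothety or pantograph in $\mathbb H^2$ to fall back on, so ``hyperbolic translations \dots composed so as to implement addition and multiplication exactly'' does not work as stated. The construction that actually succeeds abandons geodesics as the computation locus: one identifies $\mathbb R$ with a \emph{horizontal Euclidean} line $\{y=2\}$ of the half-plane model (not a geodesic), proves that the hyperbolic Peaucellier linkage is functional for a \emph{Euclidean} inversion in that model, obtains the average/adder from an equidistance-based symmetrizer (perpendicular bisector of two points at equal height is a vertical line), and gets squares, hence products, from the Kapovich--Millson identity $x^2 = 0.25 + \bigl(\tfrac{1}{x-0.5}-\tfrac{1}{x+0.5}\bigr)^{-1}$ applied to the inversion. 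Your ``check algebraically via the cosh-formula'' defers exactly this, which is the substance of the theorem; your remark that Peaucellier ``straightens motion onto a hyperbolic geodesic'' also points in the wrong direction, since what is needed is tracing Euclidean segments of the model.

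Second, and more structurally, your proposal never explains how to pass from numbers computed on the computation line back to a \emph{free point of} $\mathbb H^2$, so that the partial configuration space is $A\subseteq(\mathbb H^2)^n$ rather than a subset of (line)$^{2n}$. In the Euclidean case this step is cheap (translations/parallelizers transport coordinates), but in $\mathbb H^2$ it requires a dedicated mechanism: the paper encodes a point $\alpha$ by its three distances $X^i_\alpha=\delta\bigl((i,2),\alpha\bigr)$, $i=1,2,3$, forces these to agree with distances of auxiliary vertices on the line via equidistance and hyperbolic-alignment linkages, and then uses the explicit $\operatorname{arcosh}$ formula, turned into polynomial identities by the polynomial linkage, to force $\phi(d_{2k})=\bigl(x_{\phi(a_{2k-1})},\,x_{\phi(a_{2k})}\bigr)$. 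Without some such conversion your sentence ``the partial configuration space of the input vertices is then $A$'' is unjustified. Finally, ``each configuration has a fixed finite number of extensions'' does not by itself give a smooth trivial covering: one needs the regular-input bookkeeping for every gadget (the restriction maps fail to be coverings at degenerate configurations) and an argument for triviality, which the paper obtains by observing that the covering lives over a simply connected box containing $A$.
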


Conversely, any partial configuration space of any linkage with at least one fixed vertex is a compact semi-algebraic subset of $(\mathbb H^2)^n$, so this theorem characterizes the sets which are partial configuration spaces (see Definiton~\ref{semiAlg} for the notion of ``semi-algebraic'').

In particular, Kempe's theorem holds in the hyperbolic plane.

And here follows the differential version:

\begin{thm} \label{univ-hyp} For any compact differentiable manifold $M$, there is a linkage in the hyperbolic plane with a configuration space whose components are all diffeomorphic to $M$. More precisely, there is a partial configuration space $\mathrm{Conf}_{\mathbb{H}^2}^W(\mathcal L)$ which is diffeomorphic to $M$ and such that the restriction map $\mathrm{Conf}_{\mathbb{H}^2}(\mathcal L) \to \mathrm{Conf}_{\mathbb{H}^2}^W(\mathcal L)$ is a smooth finite trivial covering.
\end{thm}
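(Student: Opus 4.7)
The plan is to derive Theorem~\ref{univ-hyp} from Theorem~\ref{kempe-hyp} combined with the Nash-Tognoli theorem, following the schema sketched at the end of Section~\ref{sectHistorical}. Since $M$ is a compact smooth manifold, Tognoli's theorem guarantees the existence of a nonsingular real algebraic set $A_0 \subset \mathbb{R}^N$ diffeomorphic to $M$; this is the only substantial external input.

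To feed this into Theorem~\ref{kempe-hyp}, I need $A_0$ to sit inside $(\mathbb{H}^2)^n$ viewed, via the Poincaré half-plane model, as $(\mathbb{R} \times \mathbb{R}_{>0})^n \subset \mathbb{R}^{2n}$. By adding one auxiliary coordinate fixed to $0$ if necessary, I may assume $N = 2n$ is even. Since $A_0$ is compact, a translation $A := A_0 + (0,c,0,c,\ldots,0,c)$ with $c$ large enough places the set inside $(\mathbb{R} \times \mathbb{R}_{>0})^n$. Translations preserve both compactness and the algebraic property (if $A_0$ is cut out by polynomials $p_i$ then $A$ is cut out by the polynomials $p_i$ with variables shifted), so $A$ is a compact algebraic subset of $\mathbb{R}^{2n}$, still diffeomorphic to $M$, and now lying in $(\mathbb{H}^2)^n$.

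Next I would apply the algebraic case of Theorem~\ref{kempe-hyp} to $A$, producing a linkage $\mathcal L$ in $\mathbb H^2$ and a subset $W$ of its vertices with $\mathrm{Conf}^W_{\mathbb{H}^2}(\mathcal L) = A$ and such that the restriction map $\mathrm{Conf}_{\mathbb{H}^2}(\mathcal L) \to A$ is a smooth finite trivial covering. By definition of a finite trivial covering, $\mathrm{Conf}_{\mathbb{H}^2}(\mathcal L)$ is diffeomorphic to a finite disjoint union of copies of $A$, so each sheet (component) is diffeomorphic to $M$, which is the desired conclusion. There is essentially no obstacle in the present proof, since all the hard work sits in Theorem~\ref{kempe-hyp}; the only verification is that the translation into $(\mathbb{H}^2)^n$ can always be performed, which is immediate from compactness of $A_0$. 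The same recipe handles Theorem~\ref{univ-mink}, with Tognoli's theorem replaced by the Akbulut-King theorem and with no need to arrange a half-plane condition.
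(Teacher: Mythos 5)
Your proposal is correct and is exactly the route the paper itself takes: as announced at the end of Section~\ref{sectHistorical}, Theorem~\ref{univ-hyp} is deduced from the algebraic case of Theorem~\ref{kempe-hyp} applied to a nonsingular algebraic model of $M$ furnished by Tognoli's theorem, after the (routine) translation placing it inside $(\mathbb H^2)^n$ in the half-plane model. The padding to even dimension and the compactness argument for the translation are the same bookkeeping the paper implicitly relies on.
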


\medskip \noindent{\it Spherical linkages.} These linkages are the subject of Chapter~\ref{chapSphere}. In 1988, Mnëv~\cite{mnev} proved that the algebraic and differential universality theorems hold true in the real projective plane $\mathbb RP^2$ endowed with
its usual metric  as a quotient of the standard $2$-sphere. Even better, he showed that the number of copies in the differential universality for $\mathbb RP^2$ can be reduced to $1$, \emph{i.e.} any manifold is the configuration space of some linkage. As Kapovich and Millson pointed out  \cite{mk}, a direct consequence of Mnëv's theorem is the differential universality theorem for the 2-sphere (but, this time, we get several copies of the desired manifold):
\begin{thm}[Mnëv-Kapovich-Millson] \label{univ-sphere} For any compact differentiable manifold $M$, there is a linkage in the sphere with a configuration space whose components are all diffeomorphic to $M$.
\end{thm}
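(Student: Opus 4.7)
The plan is to deduce this from Mnëv's theorem for $\mathbb{RP}^2$, via the double cover $\pi:\mathbb{S}^2\to \mathbb{RP}^2$, which is a local isometry. By Mnëv, fix a linkage $\mathcal{L}'$ in $\mathbb{RP}^2$ whose configuration space is diffeomorphic to $M$. After adding rigid struts so that the underlying graph is connected, the main technical reduction is to arrange that (i) every edge of $\mathcal{L}'$ has length strictly less than $\pi/2$, and (ii) every realization of $\mathcal{L}'$ places every vertex inside a fixed open disk $D\subset \mathbb{RP}^2$ of diameter strictly less than $\pi/2$.

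Once this is done, define $\mathcal{L}$ on $\mathbb{S}^2$ by copying the graph and the edge lengths of $\mathcal{L}'$, and declaring each fixed vertex to be one of its two lifts inside one of the two connected components $\tilde D^+, \tilde D^-$ of $\pi^{-1}(D)$, say $\tilde D^+$. The key geometric lemma is that any two points $p\in\tilde D^+$ and $q\in\tilde D^-$ satisfy $d_{\mathbb{S}^2}(p,q)>\pi/2$: indeed $-q\in\tilde D^+$, hence $d_{\mathbb{S}^2}(p,-q)\le \mathrm{diam}(\tilde D^+)<\pi/2$, and so $d_{\mathbb{S}^2}(p,q)=\pi-d_{\mathbb{S}^2}(p,-q)>\pi/2$. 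No edge of $\mathcal{L}$ can therefore join a point of $\tilde D^+$ to a point of $\tilde D^-$, and the connectedness of the graph forces all vertices of every realization to lie in $\tilde D^+$.

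Because $\pi$ restricts to an isometry $\tilde D^+\to D$, the coordinatewise projection $\Pi:\mathrm{Conf}_{\mathbb{S}^2}(\mathcal{L})\to \mathrm{Conf}_{\mathbb{RP}^2}(\mathcal{L}')$ is a diffeomorphism: every realization of $\mathcal{L}$ projects to one of $\mathcal{L}'$ (since edge lengths are $<\pi/2$, the $\mathbb{RP}^2$-distance of adjacent vertices equals the sphere distance), and conversely every realization of $\mathcal{L}'$ lifts uniquely to $\tilde D^+$ by the isometry. Hence $\mathrm{Conf}_{\mathbb{S}^2}(\mathcal{L})\cong M$; juxtaposing disjoint copies of the construction in disjoint small disks of $\mathbb{S}^2$ yields configuration spaces with as many components as desired, each diffeomorphic to $M$.

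The main obstacle is the confinement step: modifying Mnëv's linkage so that all realizations stay inside a prescribed disk of diameter $<\pi/2$. Mnëv's construction takes place in an affine chart of $\mathbb{RP}^2$ and should be amenable to rescaling or, alternatively, to the addition of auxiliary rigid ``bounding-box'' sublinkages that confine the moving vertices without changing the diffeomorphism type of the configuration space. Ensuring in passing that no edge has length exactly $\pi/2$ (a generic condition) completes the reduction.
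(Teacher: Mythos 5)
The paper itself does not reprove Theorem~\ref{univ-sphere}: it quotes it from Mnëv and Kapovich--Millson, the whole point of the citation being that Mnëv's $\mathbb{R}P^2$ result transfers to $\mathbb{S}^2$ \emph{at the cost of obtaining several copies} of $M$. Your overall strategy (deduce the spherical statement from Mnëv via the double cover $\mathbb{S}^2 \to \mathbb{R}P^2$) is thus the intended route, and your lifting lemma is fine as far as it goes: if all edges have length $<\pi/2$ and all realizations in $\mathbb{R}P^2$ are confined to a disk $D$ of small diameter, then no edge can join the two sheets over $D$, and $\mathrm{Conf}_{\mathbb{S}^2}(\mathcal{L})$ is identified with $\mathrm{Conf}_{\mathbb{R}P^2}(\mathcal{L}')$.

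The genuine gap is the step you call the ``main technical reduction'': confining every realization of Mnëv's linkage to a disk of diameter $<\pi/2$ while preserving the diffeomorphism type of its configuration space. This is exactly the hard part, and neither suggested fix works. There is no homothety in spherical or elliptic geometry (the paper lists this explicitly among the new difficulties of the spherical case), so ``rescaling'' Mnëv's construction has no meaning: shrinking the edge lengths changes the configuration space, since the geometry is not scale-invariant. Moreover Mnëv's linkages are inherently global --- the paper notes that their configuration spaces are symmetric with respect to the origin of $\mathbb{R}^3$ and that spherical constructions must handle the whole sphere --- so there is no reason their realizations can be trapped in a small disk; and adding ``bounding-box'' sublinkages imposes extra constraints (and extra degrees of freedom), so the claim that this leaves the configuration space unchanged up to diffeomorphism is itself an unproved universality-type statement, not a routine adjustment. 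A telling symptom that the reduction cannot be easy: if it were available, your argument would produce a spherical linkage whose \emph{entire} configuration space is a single copy of $M$, strictly stronger than the theorem being proved and essentially the open question~(3) of the introduction. (A further, minor, slip: juxtaposing disjoint copies of a linkage in disjoint disks gives a configuration space equal to the \emph{product} of the individual ones, not a disjoint union, so that last remark would not produce several components --- but it is not needed anyway.) The route behind the citation keeps Mnëv's linkage global, compares $\mathrm{Conf}_{\mathbb{S}^2}$ with $\mathrm{Conf}_{\mathbb{R}P^2}$ for essentially the same marked graph, and accepts that the spherical configuration space consists of finitely many copies of $M$, rather than attempting to confine the mechanism to a hemisphere.
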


However, it seems impossible to use Mnëv's techniques to prove the \emph{algebraic} universality for spherical linkages: for example, all the configuration spaces of his linkages are symmetric with respect to the origin of $\mathbb R^3$. In order to obtain any semi-algebraic set as a partial configuration space, we need to start again from scratch and construct linkages specifically for the sphere.

Contrary to the Minkowski and hyperbolic cases, the generalization of the theorems to higher dimensional spheres is straightforward. Thus, we are able to prove the following:

\begin{thm} \label{kempe-sphere} Let $d \geq 2$ and let $A$ be a compact semi-algebraic subset of $(\mathbb S^d)^n$ (identified with a subset of $\mathbb R^{(d+1)n}$). Then, $A$ is a partial configuration space of some linkage $\mathcal L$ in $\mathbb S^d$.
\end{thm}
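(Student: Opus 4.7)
The plan is to follow the blueprint of Theorems~\ref{kempe-eucl}, \ref{kempe-mink} and \ref{kempe-hyp}, adapting its ingredients to spherical geometry. As a first step I would reduce to the case of a compact \emph{algebraic} subset of $(\mathbb{S}^d)^n$: a compact semi-algebraic set is a finite union of closed basic semi-algebraic pieces, and each piece defined by equations $p_i = 0$ and inequalities $q_j \geq 0$ can be rewritten as a genuine algebraic set $\{(x,t) : p_i(x) = 0,\ q_j(x) - t_j^2 = 0\}$, the original piece being recovered as its projection, i.e.\ as a partial configuration space obtained by forgetting the auxiliary $t_j$. Finite unions cause no trouble since a disjoint union of linkages realises the union of their partial configuration spaces.

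Since $A$ is compact, I may assume it is contained in a product of small geodesic balls, each strictly smaller than a hemisphere. On each such ball a chart (gnomonic, stereographic, or orthogonal) gives a rational diffeomorphism onto an open subset of $\mathbb{R}^d$, and the polynomial equations defining $A$ in the ambient coordinates of $\mathbb{S}^d \subset \mathbb{R}^{d+1}$ pull back to polynomial (or rational) equations in the chart coordinates of the vertices. The problem is thereby reduced to: given a polynomial system in chart coordinates of $n$ prescribed ``input'' vertices, build a spherical linkage one of whose partial configuration spaces is the set of solutions.

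The core of the proof is then an explicit toolbox of \emph{elementary spherical linkages}: a ``constant'' gadget fixing a vertex at a prescribed point, an ``adder'' whose output vertex carries a chart coordinate equal to the sum of two input chart coordinates, an analogous ``multiplier'' for the product, and a ``rigidifier'' forcing a computed quantity to vanish. Granted such a toolbox, one mimics the Kapovich--Millson construction: given a polynomial $P$, one walks down its expression tree, assembling constants, adders and multipliers into a linkage one of whose vertices realises $P(x_1,\dots,x_k)$, and one then rigidifies it to impose $P = 0$. Several polynomials are imposed in parallel by identifying shared input vertices, and the partial configuration space obtained by restricting to the input vertices is exactly $A$.

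The main obstacle is producing these elementary gadgets on $\mathbb{S}^d$ without a translation group: Kempe's Euclidean adder is built from parallelograms, and there is no exact spherical analogue since the composition of two spherical rotations is generically a rotation about a third axis, not a translation. The remedy I would pursue is to work in a chart so small that the spherical metric is a controlled perturbation of the Euclidean one and to replace Euclidean parallelograms with rigidified spherical quadrilaterals whose side lengths are chosen (via a spherical-trigonometric computation) so that, read in chart coordinates, they still implement addition or multiplication; similar adjustments handle the pantograph-style multiplier. The case $d \geq 3$ is essentially automatic from the case $d = 2$, since any planar construction embeds isometrically via a totally geodesic $\mathbb{S}^2 \subset \mathbb{S}^d$, and the extra dimensions only help to avoid collisions and spurious components of the configuration space.
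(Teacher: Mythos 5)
Your overall blueprint (reduce to the algebraic case, build adders and multipliers, impose $P=0$, read off the inputs) is the same as the paper's, but the reduction on which your whole construction rests fails. The step ``since $A$ is compact, I may assume it is contained in a product of small geodesic balls'' is not available: on the sphere \emph{every} semi-algebraic set is compact (for instance $A=(\mathbb{S}^d)^n$ itself is a legitimate input to the theorem), so compactness carries no smallness, and there is no homothety of $\mathbb{S}^d$ with which to shrink $A$ into a chart domain. This is precisely the difficulty the paper singles out in the spherical case: all distances are uniformly bounded, so the functional linkages must operate on the whole sphere. The paper's gadgets are therefore global, working with ambient coordinates rather than chart coordinates: the adder manipulates the $y$-coordinates of points confined to a great circle by means of symmetrizers and parallelizers (with the polynomial rescaled by a small constant so that every intermediate value stays in $[-1,1]$), and the multiplier multiplies unit complex numbers by adding arguments, again via symmetrizers. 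Your proposal instead defers the adder and multiplier to ``rigidified spherical quadrilaterals'' whose side lengths are to be tuned so that, after a controlled-perturbation argument, they implement addition in chart coordinates; but a metric perturbation cannot produce an \emph{exact} functional linkage, the rigidified square itself does not survive curvature (as the paper notes), and no such quadrilateral is actually exhibited, so the technical core of the theorem is asserted rather than proved.

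Two further steps are wrong as stated. A disjoint union of linkages has as configuration space the \emph{product}, not the union, of the two configuration spaces, so finite unions of basic semi-algebraic pieces cannot be handled that way; unions must be absorbed at the algebraic level (a finite union of algebraic sets is algebraic, via products of defining polynomials), as in Proposition~\ref{projAlgSph}, which also takes care to encode the auxiliary slack variables as extra points \emph{of the sphere} (suitably rescaled to fit in $[-1,1]$) so that the projection really is a restriction map of a linkage. Finally, $d\geq 3$ is not automatic from $d=2$: realizing the same edge lengths in $\mathbb{S}^d$ lets vertices leave any totally geodesic $\mathbb{S}^2$, so a planar functional linkage generically ceases to be functional --- the extra dimensions create spurious realizations rather than remove them. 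The paper restores functionality with the $3$-plane linkage and the $d$-dimensional symmetrizers and parallelizers built from it, an ingredient absent from your sketch.
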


In particular, Kempe's theorem holds in the sphere.

Conversely, any partial configuration space of any linkage is a compact semi-algebraic subset of $(\mathbb S^d)^n$ (see Section~\ref{sectAlgSets}), so this theorem characterizes the sets which are partial configuration spaces.

Let us note that even  when $A$ is algebraic, our construction does not provide a linkage $\mathcal L$ such that the restriction map $\mathrm{Conf}_{\mathbb{S}^d}(\mathcal L) \to A$ is a smooth finite trivial covering. We do not know whether such a linkage exists.

\bigskip \noindent{\it Some questions.} Our results suggest naturally -- among many questions -- the following:

\begin{enumerate}
\item Besides the 2-dimensional case, are the results in the Minkowski plane true for any (finite-dimensional) linear space endowed with a non-degenerate quadratic form? And what about higher-dimensional hyperbolic spaces? It is likely that the adaptation of the $2$-dimensional proof hides no surprise, like in the Euclidean case, but it would probably require tedious work to prove it.
\item In our definition of linkages in the Minkowski plane, we allow some edges to have imaginary lengths (they are ``timelike"). Is it possible to require the graphs of Theorems~\ref{kempe-mink} and~\ref{univ-mink} to be spacelike, \emph{i.e.} require all their edges to have real lengths?
\item In all the universality theorems that we prove, we obtain a linkage whose configuration space is diffeomorphic to the sum of a finite number of copies of the given manifold $M$. Is it possible to choose this sum trivial, that is, with exactly one copy of $M$? (This question is also open in the Euclidean plane.)
\item Is the differential universality theorem true on any Riemannian manifold?
\end{enumerate}

\medskip \noindent{\it Linkages on Riemannian manifolds.} Let us give a partial answer to the last question using the following idea: just as the surface of the earth looks flat to us, any Riemannian manifold will almost behave as the Euclidean space if one considers a linkage which is small enough. However, our linkage has to be robust to small perturbations of the lengths, which is \emph{not} the case for many of the linkages described in this paper (consider for example the \emph{rigidified square linkage}).

\begin{thm} \label{thmQuelconque}
Consider a linkage $\mathcal L$ in the Euclidean space $\mathbb E^n$, with at least one fixed vertex, such that for any small perturbation of the length vector $l \in (\mathbb R_{\geq 0})^E$, the configuration space $\mathrm{Conf}_{\mathbb{R}^n}(\mathcal L)$ remains the same up to diffeomorphism. Then for any Riemannian manifold $\mathcal M$, there exists a linkage $\mathcal L_{\mathcal M}$ in $\mathcal M$ whose configuration space is diffeomorphic to $\mathrm{Conf}_{\mathbb{E}^n}(\mathcal L)$.
\end{thm}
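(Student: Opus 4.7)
The plan is to make precise the intuition that at very small scales, $\mathcal M$ looks Euclidean, so a scaled-down copy of $\mathcal L$ realized in $\mathcal M$ should yield the same configuration space as $\mathcal L$ itself, provided the slight deformation of the length constraints coming from curvature does not alter the topology.

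Fix a point $p \in \mathcal M$, and require that one of the fixed vertices of $\mathcal L$ (translated so as to sit at the origin in $\mathbb E^n$) be sent to $p$. Use the exponential map $\exp_p : T_p \mathcal M \cong \mathbb R^n \to \mathcal M$ to obtain normal coordinates around $p$. For $\varepsilon > 0$, let $\mathcal L_\varepsilon^{\mathcal M}$ be the linkage in $\mathcal M$ obtained from $\mathcal L$ by scaling all edge lengths by $\varepsilon$ and placing each fixed vertex, originally at $v_i^0 \in \mathbb R^n$, at $\exp_p(\varepsilon v_i^0) \in \mathcal M$. By connectedness of $\mathcal L$, every vertex of every realization of $\mathcal L_\varepsilon^{\mathcal M}$ lies within $\mathcal M$-distance $O(\varepsilon)$ of $p$, so for $\varepsilon$ small every realization takes values in a normal coordinate chart around $p$. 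Pulling back via $\exp_p^{-1}$ and rescaling by $1/\varepsilon$, a realization becomes a tuple $(\tilde x_v)\in (\mathbb R^n)^V$ satisfying, for each edge $(u,v)$,
\[
G_\varepsilon(\tilde x)_{uv} := \frac{1}{\varepsilon^2}\, d_{\mathcal M}\bigl(\exp_p(\varepsilon \tilde x_u),\, \exp_p(\varepsilon \tilde x_v)\bigr)^2 = l_{uv}^2,
\]
together with the fixed-vertex conditions. The Taylor expansion of the metric tensor in normal coordinates shows that $G_\varepsilon$ extends smoothly to $\varepsilon=0$, with $G_0(\tilde x)_{uv} = |\tilde x_u - \tilde x_v|^2$, and satisfies $G_\varepsilon = G_0 + O(\varepsilon^2)$ uniformly on compact sets. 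Hence the rescaled configuration space of $\mathcal L_\varepsilon^{\mathcal M}$ is a smooth $\varepsilon$-deformation of $\mathrm{Conf}_{\mathbb E^n}(\mathcal L) = G_0^{-1}(l^2) \cap \{\text{fixed-vertex conditions}\}$.

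The remaining and main step is to show that $G_\varepsilon^{-1}(l^2)$ is diffeomorphic to $G_0^{-1}(l^2)$ for all sufficiently small $\varepsilon$. The stability hypothesis forces $l^2$ to be a regular value of $G_0$ on the fixed-vertex submanifold: otherwise, since $G_0$ is polynomial, arbitrarily small perturbations $l'$ of $l$ can be chosen to cross the lower-dimensional semi-algebraic critical locus of $G_0$, producing a fiber of strictly different topology and contradicting stability. By openness of regular values and smooth dependence on $\varepsilon$, the value $l^2$ remains regular for $G_\varepsilon$ when $\varepsilon$ is small. Assembling the $G_\varepsilon$ into a single smooth map $G : [0,\varepsilon_0] \times (\mathbb R^n)^V \to \mathbb R^E$, the fiber $G^{-1}(l^2)$ is a smooth submanifold, the projection $G^{-1}(l^2) \to [0,\varepsilon_0]$ is a proper submersion (properness coming from the compactness of $\mathrm{Conf}_{\mathbb E^n}(\mathcal L)$, itself ensured by the fixed vertex at $p$), and Ehresmann's fibration theorem delivers a diffeomorphism of fibers. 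Setting $\mathcal L_{\mathcal M}:=\mathcal L_\varepsilon^{\mathcal M}$ for any such $\varepsilon$ finishes the proof. The most delicate point is the extraction of the regular-value property of $G_0$ from the purely diffeomorphism-theoretic stability assumption; this is where the algebraic nature of the Euclidean squared-distance map is essential.
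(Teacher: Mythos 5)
Your overall strategy is the same as the paper's: realize a scaled-down copy of $\mathcal L$ near a point of $\mathcal M$, rescale by $1/\varepsilon$ so that the edge constraints become a smooth family $G_\varepsilon$ with $G_0$ the Euclidean squared-distance map, and conclude by regular-value stability plus a properness/Ehresmann argument that the fibers are diffeomorphic for small $\varepsilon$. (The paper works with an isometric chart $(U,g)$ and a map $\Psi(\phi,\lambda)=\frac{1}{\lambda^2}\delta^2(\lambda\phi(v),\lambda\phi(v'))$ extended to $\lambda=0$ by Taylor's formula; your normal-coordinate version of this is essentially identical.)

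However, there is a genuine gap exactly at the point you call the most delicate one: the claim that the stability hypothesis \emph{forces} $l^2$ to be a regular value of $G_0$. Your argument --- that a small perturbation of $l$ ``crossing the critical locus'' would produce a fiber of strictly different topology --- is a non sequitur: crossing the set of critical values does not in general change the diffeomorphism type of the fiber (already for $f(x)=x^3$ every nearby fiber is a point although $0$ is a critical value), and conversely ``all nearby fibers are diffeomorphic'' does not imply that the given value is regular. Nothing in your text rules out a linkage that is robust in the sense of the hypothesis while $l^2$ is a critical value of the squared-distance map, so the regularity you need is not established. The paper avoids this issue entirely, and this is the intended use of the hypothesis: by Sard's lemma one can \emph{perturb $l$} so that $l^2$ becomes a regular value, and the robustness assumption guarantees that this perturbation does not change $\mathrm{Conf}_{\mathbb E^n}(\mathcal L)$ up to diffeomorphism; one then runs the deformation argument at the perturbed, regular value. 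Your proof is repaired by making this switch (perturb first, then deform), rather than trying to deduce regularity of the original $l^2$ from stability; the remaining steps (smooth extension at $\varepsilon=0$ via Taylor in normal coordinates, uniform boundedness from the fixed vertex and connectedness, openness of the regularity condition, Ehresmann) are sound and match the paper's reasoning.
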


In particular, Theorem~\ref{thmQuelconque} combined with the work of Jordan and Steiner~\cite{jordan2001compact} yields directly
\begin{corollary}
In any Riemannian surface $\mathcal M$, the differentiable universality theorem is true for compact orientable surfaces. In other words, any compact orientable surface is diffeomorphic to the configuration space $\mathrm{Conf}_\mathcal{M}(\mathcal{L})$ of some linkage $\mathcal L$.
\end{corollary}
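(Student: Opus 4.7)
The strategy is a direct concatenation: apply the theorem of Jordan--Steiner \cite{jordan2001compact} to produce a Euclidean planar linkage whose configuration space is the desired compact orientable surface $S$, verify the robustness hypothesis, and then feed the linkage into Theorem~\ref{thmQuelconque} to transfer it to $\mathcal M$.

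More precisely, fix a compact orientable surface $S$. By \cite{jordan2001compact}, there exists a linkage $\mathcal L$ in $\mathbb E^2$, with at least one fixed vertex, such that $\mathrm{Conf}_{\mathbb E^2}(\mathcal L)$ is diffeomorphic to $S$. I must check that $\mathcal L$ satisfies the stability hypothesis of Theorem~\ref{thmQuelconque}: small perturbations of the length vector $l \in (\mathbb R_{\geq 0})^E$ leave the diffeomorphism type of $\mathrm{Conf}_{\mathbb E^2}(\mathcal L)$ unchanged. This is where I would do the actual work. View the configuration space as $\Phi^{-1}(l)$, where $\Phi \colon \mathbb R^{2n} \to \mathbb R^E$ sends a tuple of vertex positions (with the pinned vertices fixed) to the vector of squared edge-distances. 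The Jordan--Steiner construction is designed so that the configuration space is a smooth closed submanifold of the slice, which amounts to $l$ being a regular value of $\Phi$. Because regular values form an open subset of $\mathbb R^E$ (the set of critical values is closed in view of the compactness of $\Phi^{-1}(l)$) and $\Phi$ is a proper map to a neighborhood of $l$, Ehresmann's fibration theorem gives a local trivialization: $\Phi^{-1}(l')$ is diffeomorphic to $\Phi^{-1}(l) \cong S$ for every $l'$ close enough to $l$. Thus $\mathcal L$ is robust in the sense of Theorem~\ref{thmQuelconque}.

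Applying Theorem~\ref{thmQuelconque} with $n = 2$ and ambient manifold the Riemannian surface $\mathcal M$ yields a linkage $\mathcal L_{\mathcal M}$ in $\mathcal M$ whose configuration space is diffeomorphic to $\mathrm{Conf}_{\mathbb E^2}(\mathcal L) \cong S$, proving the corollary. The one genuine point to verify, and what I expect to be the main obstacle, is the regular-value claim for Jordan--Steiner's linkages; this is not automatic since many classical linkages (the rigidified square being the explicit example mentioned by the author) fail it. Fortunately, a careful inspection of \cite{jordan2001compact} shows that their building blocks are assembled from generic realizations, and that smoothness of the configuration space is built into the construction, so the hypothesis holds without additional perturbation tricks.
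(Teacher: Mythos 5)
Your proposal follows exactly the paper's route: the paper obtains the corollary by directly combining Theorem~\ref{thmQuelconque} with the Jordan--Steiner result~\cite{jordan2001compact}, which is precisely your concatenation, and your verification of the robustness hypothesis (regular value of the squared-length map plus properness and Ehresmann) merely fills in the detail the paper leaves implicit. One small imprecision: the closedness of the set of critical values should be deduced from the compactness of the whole configuration domain (guaranteed by the fixed vertex and the bounded total edge length), not from the compactness of the single fibre $\Phi^{-1}(l)$, but this does not affect the argument.
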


This leads to the following
\begin{question}
Which manifolds can be obtained as the configuration space of some linkage in $\mathbb R^n$ which is robust to small perturbations (in the sense of Theorem~\ref{thmQuelconque}) ?
\end{question}

This question is probably very difficult, but it is clear that there are restrictions on such manifolds: for example, they have to be orientable (see again~\cite{jordan2001compact}).

\section{Ingredients of the proofs} \label{sectIngred} There are essentially three technical as well as conceptual tools: functional linkages, combination of elementary linkages, and regular inputs. The main idea is always the same as in all the known proofs of Universality theorems (see the proofs of Thurston, Mnëv~\cite{mnev}, King~\cite{king} or Kapovich and Millson~\cite{mk}): one combines elementary linkages to construct a \emph{``polynomial linkage''}.

\bigskip
\noindent
{\it Functional linkages.} 
One major ingredient in the proofs is the notion of functional linkages. Here we enrich the graph structure by marking two new vertex subsets $P$ and $Q$ playing the role of inputs and outputs, respectively. If the partial realization of $Q$ is determined by the partial realization of $P$, by means of a function $f: \mathrm{Conf}_\mathcal{M}^P(\mathcal{L}) \to \mathcal M^Q$ (called the \emph{input-output function}), then we say that we have a functional linkage for $f$ (for us, $\mathcal M$ will be the Minkowski plane $\mathbb M$, the hyperbolic plane $\mathbb H^2$ or the sphere $\mathbb S^d$). The Peaucellier linkage is a famous historical example: it is functional for an inversion with respect to a circle.
\begin{figure}[!h]
\begin{center}
\shorthandoff{:}
\begin{tikzpicture}[scale=2]
\node[vertex, label=below:$a$] (v5) at (-2,0) {};
\node[vertex, label=left:$g$] (v4) at (-1,0) {};
\node[vertex, label=below:$d$] (v3) at (0,0) {};
\node[vertex] (v2) at ({sqrt(2)/2},{sqrt(2)/2}) {};
\node[vertex] (v1) at ({sqrt(2)/2},{-sqrt(2)/2}) {};
\node[vertex, label=right:$e$] (v0) at ({sqrt(2)},0) {};
\draw (v4) -- (v3) -- (v2) -- (v0) -- (v1) -- (v3);
\draw (v2) -- (v5) -- (v1);
\end{tikzpicture}
\shorthandon{:}
\end{center}
\caption{In the Euclidean plane, the \emph{Peaucellier-Lipkin straight-line motion linkage} forces the point $e$ to move on a straight line. The vertices $a$ and $g$ are pinned down. It is a functional linkage for the inversion with respect to a circle centered at $a$: the input is $e$ and the output is $d$.}
\label{pl}
\end{figure}
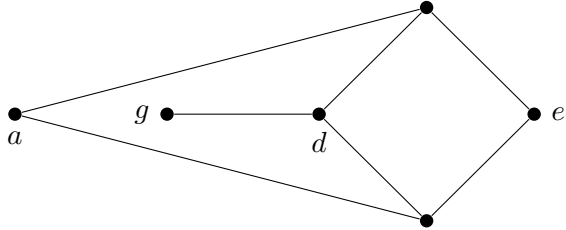

\noindent
{\it  Combination.}
Another major step in the proofs consists in proving the existence of functional linkages associated to any given polynomial $f$. This will be done by 
``combining'' elementary functional linkages. We define combination so that combining two functional linkages for the functions $f_1$ and $f_2$ provides a functional linkage for $f_1 \circ f_2$.
\bigskip

\bigskip
\noindent
{\it Elementary linkages.}
All the work then concentrates in proving the existence of linkages for suitable elementary functions (observe that even for elementary linkages one uses a combination of more elementary ones). As an example, we give the list of the elementary linkages needed to prove Theorem~\ref{kempe-mink} (in the Minkowski case):
\begin{enumerate}
\item The linkages for geometric operations:
\begin{enumerate}
\item The robotic arm linkage (Section~\ref{articArmSect}): one of the most basic linkages, used everywhere in our proofs and in robotics in general.
\item The rigidified square (Section~\ref{rigidSquareSect}): a way of getting rid of degenerate configurations of the square using a well-known construction.
\item The Peaucellier inversor (Section~\ref{peauc}): this famous linkage of the 1860's has a slightly different behavior in the Minkowski plane but achieves basically the same goal.
\item The partial $t_0$-line linkage (Section~\ref{partLineSect}): it is obtained using a Peaucellier linkage, but does not trace out the whole line.
\item The $t_0$-integer linkage (Section~\ref{intLinkSect}): it is a linkage with a discrete configuration space.
\item The $t_0$-line linkage (Section~\ref{sectLineLink}): it draws the whole line, and is obtained by combining the two previous linkages.
\item The horizontal parallelizer (Section~\ref{horizParSect}): it forces two vertices to have the same ordinate, and it is obtained by combining several line linkages.
\item The diagonal parallelizer (Section~\ref{diagParSect}): its role is similar to the horizontal parallelizer but its construction is totally different.
\end{enumerate}
\item The linkages for algebraic operations, which realize computations on the $t = 0$ line:
\begin{enumerate}
\item The average function linkage (Section~\ref{symSect}): it computes the average of two numbers, and is obtained by combining several of the previous linkages.
\item The adder (Section~\ref{adder}): it is functional for addition on the $t = 0$ line, and is obtained from several average function linkages.
\item The square function linkage (Section~\ref{sectSquareLink}): it is functional for the square function and is obtained by combining the Peaucellier linkage (which is functional for inversion) with adders. This linkage is somewhat difficult to obtain because we want the inputs to be able to move everywhere in the line, while the inversion is of course not defined at $x = 0$.
\item The multiplier (Section~\ref{multiplier}): it is functional for multiplication and is obtained from square function linkages.
\item The polynomial linkage (Section~\ref{polylink}): obtained by combining adders and multipliers, it is functional for a given polynomial function $f$. This linkage is used to prove the universality theorems: if the outputs are fixed to $0$, the inputs are allowed to move exactly in $f^{-1}(0)$.
\end{enumerate}
\end{enumerate}

\medskip

\noindent
{\it  Regular inputs.}
In our theorems, we need the restriction map $\mathrm{Conf}_{\mathcal{M}}(\mathcal L) \to \mathrm{Conf}_{\mathcal{M}}^P(\mathcal L)$ to be a smooth finite trivial covering. In the differential universality Theorem, it ensures in particular that the whole configuration space consists in several copies of the given manifold $M$. The set of regular inputs $\mathrm{Reg}_{\mathcal{M}}^P(\mathcal L)$ is the set of all realizations of the inputs which admit a neighborhood onto which the restriction map is a smooth finite covering. We have to be very careful, because even for quite simple linkages such as the robotic arm, the restriction map is not a smooth covering everywhere! There are mainly two possible reasons for the restriction map not to be a smooth covering:
\begin{enumerate}
\item One realization of the inputs may correspond to infinitely many realizations of the whole linkage (for example, when the robotic arm in Section~\ref{articArmSect} has two inputs fixed at the same location, the workspace of the third vertex is a whole circle).
\item Even if it corresponds only to a finite number of realizations, these realizations may not depend smoothly on the inputs (for example, when the robotic arm in Section~\ref{articArmSect} is stretched).
\end{enumerate}

\medskip
\noindent
{\it New difficulties in each case.}
While the idea is always the same in all known proofs of universality theorems for linkages, \emph{i.e.} combine elementary linkages to form a functional linkage for polynomials, each case has its own new difficulties due to different geometric properties, and the elementary linkages always require major changes to work correctly. Here follow examples of such differences with the Euclidean case:

\smallskip \subparagraph{\it The Minkowski case}
\begin{enumerate}
\item The Minkowski plane $\mathbb{M}$ is not isotropic: its directions are not all equivalent. Indeed, these directions have a causal character in the sense that they may be spacelike, lightlike or timelike. For example, one needs different linkages in order to draw spacelike, timelike and lightlike lines.
\item In the Euclidean plane, two circles $\mathcal{C}(x, r)$ and $\mathcal{C}(x', r')$ intersect if and only if $\lvert r - r' \rvert \leq \lVert x - x' \rVert \leq r + r'$, but in the Minkowski plane, the condition of intersection is much more complicated to state (see Section~\ref{sectInterHyp}).
\item In the Euclidean plane, one only has to consider compact algebraic sets. Applying a homothety, one may assume such a set to be inside a small neighborhood of zero, which makes the proof easier. Here, the algebraic sets are no longer compact, so we have to work with mechanisms which are able to deal with the whole plane.
\end{enumerate}

\smallskip \subparagraph{\it The hyperbolic case} \begin{enumerate}
\item The \emph{rigidified square linkage}, used extensively in all known proofs in the flat case, does not work anymore in its usual form, and does not have a simple analogue.
\item There is no natural notion of homothety: in particular, the \emph{pantograph} does not compute the middle of a hyperbolic segment, contrary to the flat cases.
\item The notion of \emph{algebraic set} is less natural than in the flat case.
\item In every standard model of the hyperbolic plane (such as the Poincaré half-plane), the expression of the distance between two points is much more complicated than in the flat case.

\end{enumerate}

\smallskip \subparagraph{\it The spherical case} 
\begin{enumerate}
\item Just as in the hyperbolic case, the curvature prevents the \emph{rigidified square linkage} from working correctly.
\item There is no natural notion of homothety.
\item In the Euclidean or hyperbolic planes, we only need to prove algebraic universality for bounded algebraic sets, which means that our functional linkages do not need to work on the whole surface. In the sphere, all the distances are uniformly bounded (even the lengths of the edges of our linkages), so we need to take into account the whole sphere when constructing linkages.
\item The compactness of the sphere also makes it difficult to construct linkages which deal with algebraic operations (addition, multiplication, division) since there is no proper embedding of $\mathbb R$ in the sphere.
\end{enumerate}

\section{Algebraic and semi-algebraic sets} \label{sectAlgSets}

In this section, we recall the standard definitions of algebraic and semi-algebraic sets. We adapt them to the Minkowski plane, the hyperbolic plane and the sphere in a natural way and state some of their properties.

\begin{definition} \label{semiAlg}
An \emph{algebraic subset of $\mathbb{R}^n$} is a set $A \subseteq \mathbb{R}^n$ such that there exist $m \in \mathbb{N}$ and $f: \mathbb{R}^n \to \mathbb{R}^m$ a polynomial such that $A = f^{-1}(0)$.

We define a \emph{semi-algebraic subset of $\mathbb{R}^n$} (see~\cite{bochnak}) as the projection of an algebraic set\footnote{Our definition of semi-algebraic sets is not the standard one, but we know from the Tarski–Seidenberg theorem that the two definitions are equivalent (see~\cite{bochnak}).}. More precisely, it is a set $B$ such that there exists $N \geq n$ and an algebraic set $A$ of $\mathbb{R}^N$ such that $B = \pi(A)$, where $\pi$ is the projection onto the first coordinates
\[ \begin{aligned} \pi: \mathbb{R}^N = \mathbb{R}^n \times \mathbb{R}^{N-n} & \to \mathbb{R}^n
\\ (x, y) & \longmapsto x. \end{aligned} \]

We define the \emph{(semi-)algebraic subsets of $\mathbb{M}^n$} by identifying $\mathbb{M}^n$ with $(\mathbb{R}^2)^n = \mathbb{R}^{2n}$.

We also define the \emph{(semi-)algebraic subsets of $(\mathbb H^2)^n$}, using the Poincaré half-plane model (see Definition~\ref{defHalfPlane}), as the (semi-)algebraic subsets of $\mathbb{R}^{2n}$ which are contained in $\left\{ \begin{pmatrix}x\\y\end{pmatrix} \in \mathbb{R}^2 \middle\arrowvert y > 0 \right\}^n$.

Finally, a \emph{(semi-)algebraic subset of $(\mathbb S^d)^n$} (for $d \geq 2$) is a semi-algebraic subset of $\mathbb R^{d+1}$ which is contained in the unit sphere of $\mathbb R^{d+1}$.
\end{definition}

\begin{prop} For any \emph{compact} semi-algebraic subset $B$ of $\mathbb R^n$, there exists $N \geq n$ and a \emph{compact} algebraic subset $A$ of $\mathbb R^N$ such that $B = \pi(A)$, where $\pi$ is the projection onto the first coordinates: $\mathbb R^N \to \mathbb R^n$.
\end{prop}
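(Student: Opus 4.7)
The idea is to combine the Finiteness Theorem for closed semi-algebraic sets (see~\cite{bochnak}) with the classical trick of encoding an inequality $p \geq 0$ as the equation $p = z^2$ for a new real variable $z$. Since $B$ is compact it is in particular closed, so by the Finiteness Theorem it can be written as a finite union $B = \bigcup_{i=1}^m B_i$ where
\[ B_i = \{x \in \mathbb{R}^n : p_{i,1}(x) \geq 0, \ldots, p_{i,k_i}(x) \geq 0\} \]
for certain polynomials $p_{i,j} \in \mathbb{R}[x_1, \ldots, x_n]$. This is the only step where the compactness hypothesis intervenes (through closedness of $B$), and it is the main obstacle in the sense that it relies on a non-trivial external result.

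For each piece $B_i$ I would introduce auxiliary variables $z_{i,1}, \ldots, z_{i,k_i}$ and set
\[ A_i = \left\{ (x, z) \in \mathbb{R}^{n+k_i} : p_{i,j}(x) = z_{i,j}^2 \text{ for } j = 1, \ldots, k_i \right\}. \]
This set is algebraic by construction. Its projection onto the first $n$ coordinates is exactly $B_i$, since the equation $z_{i,j}^2 = p_{i,j}(x)$ has a real solution $z_{i,j}$ if and only if $p_{i,j}(x) \geq 0$. Moreover $A_i$ is compact: it is closed (as an algebraic set) and bounded, because $x$ ranges in the compact set $B_i$ and each coordinate $z_{i,j}$ satisfies $z_{i,j}^2 = p_{i,j}(x) \leq \sup_{B_i} p_{i,j}$.

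To conclude, I would pad each $A_i$ with zero coordinates so as to view all of them as algebraic subsets of a common ambient space $\mathbb{R}^N$, with $N = n + \sum_{i=1}^m k_i$. The union $A = \bigcup_{i=1}^m A_i$ is then algebraic — a finite union of algebraic sets is cut out by the products of their defining polynomials — and compact as a finite union of compact sets. By construction $\pi(A) = \bigcup_i \pi(A_i) = \bigcup_i B_i = B$, which is the desired conclusion.
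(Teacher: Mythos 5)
Your proof is correct and follows essentially the same route as the paper: reduce to basic closed sets via the Finiteness Theorem, then encode each inequality $p\geq 0$ by the equation $p=z^2$ with a slack variable. You simply spell out the final gluing (padding with zero coordinates and taking the union) and argue compactness of each $A_i$ by closed-plus-bounded, which the paper handles a bit more tersely; the substance is the same.
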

\begin{proof}
{\bf First case.} Assume for the moment that there exist polynomials $f_1, \dots, f_m: \mathbb R^n \to \mathbb R$ such that
\[ B = \left\{ x \in \mathbb R^n ~ \middle\arrowvert ~ \forall i \in \{1, \dots, m\} ~~ f_i(x) \geq 0 \right\}.\]

Let 
\[ \begin{aligned} h: \mathbb{R}^{n+m} = \mathbb{R}^n \times \mathbb{R}^m & \to \mathbb R^m
\\ \left(x, \begin{pmatrix}y_1 \\ \vdots \\ y_m \end{pmatrix}\right) & \longmapsto \begin{pmatrix} f_1(x) - y_1^2 \\ \vdots \\ f_m(x) - y_m^2 \end{pmatrix} \end{aligned} \]
and $A = h^{-1}(0)$. Then the projection of $A$ onto the first $n$ coordinates is obviously $B$. Moreover, $A$ is compact since it is the image of $B$ by the continuous function
\[ \begin{aligned} g: B & \to \mathbb{R}^{n+m} = \mathbb{R}^n \times \mathbb{R}^m
\\ x & \longmapsto \left(x, \begin{pmatrix}\sqrt{f_1(x)} \\ \vdots \\ \sqrt{f_m(x)} \end{pmatrix}\right) \end{aligned}. \]

{\bf General case.} The \emph{finiteness theorem for semi-algebraic sets} (see~\cite{bochnak}, 2.7.2) states that any closed algebraic set can be described as the union of a finite number of sets $B_1, \dots, B_k$ which satisfy the assumption of the first case: apply the first case to each of the $B_i$'s to end the proof.
\end{proof}

We end this section with two analogous propositions for the hyperbolic plane and the sphere.

\begin{prop} \label{projAlgHyp} For any compact semi-algebraic subset $B$ of $(\mathbb H^2)^n$, there exists $N \geq n$ and a compact algebraic subset $A$ of $(\mathbb H^2)^N$ (with some $N \geq n$) such that $B = \pi(A)$, where $\pi$ is the projection onto the first coordinates: $(\mathbb H^2)^N \to (\mathbb{H}^2)^n$.
\end{prop}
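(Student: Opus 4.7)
The plan is to reduce this to the corresponding flat statement (the previous proposition) by viewing $(\mathbb{H}^2)^n$ as sitting inside $\mathbb{R}^{2n}$ via the Poincaré half-plane model, and then repairing the resulting algebraic set so that all its coordinates fit into half-planes.

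First, by the definition of a (semi-)algebraic subset of $(\mathbb{H}^2)^n$, we may regard $B$ as a compact semi-algebraic subset of $\mathbb{R}^{2n}$ contained in $\{y>0\}^n$. Applying the previous proposition to $B$, I obtain an integer $N'\geq 2n$ and a compact algebraic set $A'\subseteq \mathbb{R}^{N'}=\mathbb{R}^{2n}\times\mathbb{R}^m$ (with $m=N'-2n$) such that the projection $\pi':\mathbb{R}^{N'}\to\mathbb{R}^{2n}$ onto the first $2n$ coordinates satisfies $\pi'(A')=B$. Write the coordinates of $\mathbb{R}^{N'}$ as $(x_1,y_1,\ldots,x_n,y_n,z_1,\ldots,z_m)$.

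The issue is that the auxiliary real coordinates $z_1,\ldots,z_m$ introduced by the construction are unconstrained, so $A'$ is not naturally an algebraic subset of a product of copies of $\mathbb{H}^2$. I fix this by pairing each $z_j$ with an auxiliary coordinate $w_j$ and imposing the polynomial equation $w_j=1$. Formally, let $N=n+m$ and define
\[
A=\bigl\{(x_1,y_1,\ldots,x_n,y_n,z_1,1,z_2,1,\ldots,z_m,1)\in\mathbb{R}^{2N}
\;\bigm|\;(x_1,y_1,\ldots,x_n,y_n,z_1,\ldots,z_m)\in A'\bigr\}.
\]
This $A$ is cut out in $\mathbb{R}^{2N}$ by the polynomials defining $A'$ (re-indexed to use the odd-numbered coordinates of the last $2m$ slots) together with the polynomials $w_j-1$, so it is algebraic. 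It is compact, since it is the image of the compact set $A'$ under the continuous map that inserts $1$'s in the even slots. It is contained in $\{y>0\}^N$: the first $n$ pairs have $y_i>0$ because every point of $A'$ projects into $B\subseteq\{y>0\}^n$, and the remaining $m$ pairs have second coordinate $1>0$. Hence $A$ is a compact algebraic subset of $(\mathbb{H}^2)^N$ in the sense of Definition~\ref{semiAlg}, and the projection onto the first $n$ pairs of coordinates is exactly $B$.

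I do not expect a genuine obstacle here: the entire content lies in the previous proposition for $\mathbb{R}^n$, and the only subtlety is to make sure that the artificial slack variables introduced by the Lassere/square-roots trick are repackaged into $\mathbb{H}^2$-coordinates by appending a trivial second coordinate. The same argument, with any fixed positive constant pairing, would equally well produce a set in $(\mathbb{H}^2)^N$ from any algebraic preimage in a Euclidean space.
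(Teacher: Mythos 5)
Your proposal is correct and takes essentially the same route as the paper: both apply the Euclidean proposition to $B$ viewed inside $\mathbb{R}^{2n}$, and then convert each auxiliary real coordinate of the resulting compact algebraic set into a point of $\mathbb{H}^2$ by appending a constant second coordinate equal to $1$, observing that the resulting set is algebraic, compact, contained in the half-plane product, and still projects onto $B$.
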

\begin{proof}
Let $A'$ be a compact algebraic set of $\mathbb{R}^{N'}$ (with some $N' \geq 2n$) such that $B = \pi(A')$, where $\pi$ is the projection onto the first coordinates: $\mathbb R^{N'} \to \mathbb{R}^{2n}$. Then the projection of the compact algebraic set
\footnotesize \[ A:= \left\{ \left(\begin{pmatrix}x_1 \\ y_1\end{pmatrix}, \dots, \begin{pmatrix}x_n \\ y_n\end{pmatrix}, \begin{pmatrix}x_{n+1} \\ 1\end{pmatrix}, \dots, \begin{pmatrix}x_{N'-n} \\ 1\end{pmatrix} \right) ~ \middle\arrowvert ~ \left(x_1, y_1, \dots, x_n, y_n, x_{n+1}, \dots, x_{N'-n}\right) \in A' \right\} \]
\normalsize (where $A \subseteq (\mathbb H^2)^{N'-n}$) is exactly $B$.
\end{proof}

\begin{prop} \label{projAlgSph} For any compact semi-algebraic subset $B$ of $(\mathbb S^2)^n$, there exists $N \geq n$ and a (compact) algebraic subset $A$ of $(\mathbb S^2)^N$ (with some $N \geq n$) such that $B = \pi(A)$, where $\pi$ is the projection onto the first coordinates: $(\mathbb S^2)^N \to (\mathbb{S}^2)^n$.
\end{prop}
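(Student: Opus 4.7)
The approach follows the same strategy as Proposition~\ref{projAlgHyp}: first produce a compact algebraic subset of an ambient Euclidean space projecting onto $B$, then re-encode the extra real coordinates as coordinates of sphere points. The new issue compared to the hyperbolic case is that $\mathbb{S}^2$ is bounded: only values in $[-1,1]$ can appear as the first coordinate of a point of $\mathbb{S}^2$, so a preliminary rescaling step is needed.

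Viewing $B$ as a compact semi-algebraic subset of $\mathbb{R}^{3n}$, the first proposition of this section provides $k\geq 0$ and a compact algebraic set $A'\subseteq\mathbb{R}^{3n+k}$ with $B=\pi'(A')$, where $\pi':\mathbb{R}^{3n+k}\to\mathbb{R}^{3n}$ is the projection onto the first $3n$ coordinates. By compactness of $A'$, one can pick $R>0$ so that the last $k$ coordinates of every point of $A'$ lie in $[-R,R]$. Writing $\sigma:\mathbb{S}^2\to\mathbb{R}$ for the projection onto the first Euclidean coordinate, I set $N:=n+k$ and define
\[ A:=\Bigl\{(p_1,\ldots,p_N)\in(\mathbb{S}^2)^N \;\Big|\; \bigl(p_1,\ldots,p_n,R\sigma(p_{n+1}),\ldots,R\sigma(p_N)\bigr)\in A'\Bigr\}. \]

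This set $A$ is cut out in $\mathbb{R}^{3N}$ by the defining polynomials of $A'$ (after the linear substitution of $R\sigma(p_{n+j})$ for the last $k$ variables) together with the $N$ sphere equations, so it is a (compact) algebraic subset of $(\mathbb{S}^2)^N$ in the sense of Definition~\ref{semiAlg}. The inclusion $\pi(A)\subseteq B$ is immediate from the construction. Conversely, given $b\in B$, choose $(b,t_1,\ldots,t_k)\in A'$ with each $t_j\in[-R,R]$, and for each $j$ pick any $p_{n+j}\in\mathbb{S}^2$ whose first coordinate equals $t_j/R\in[-1,1]$ (such a point always exists since $\sigma(\mathbb{S}^2)=[-1,1]$); the resulting tuple is a preimage of $b$ in $A$.

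The construction is essentially bookkeeping once the right encoding is chosen. The only genuinely new ingredient compared to the hyperbolic case is the rescaling by $R$, which is made possible by the compactness of $A'$; I do not expect any serious obstacle.
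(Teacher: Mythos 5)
Your proof is correct and follows essentially the same route as the paper: extract a compact algebraic set $A'$ in Euclidean space projecting onto $B$, bound its extra coordinates using compactness, and encode each extra coordinate as (a rescaling of) the first Euclidean coordinate of an auxiliary sphere point. The only cosmetic difference is that the paper additionally pins the auxiliary points to the equator ($z=0$, $x^2+y^2=1$), whereas you leave their remaining coordinates free on the sphere, which changes nothing in the argument.
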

\begin{proof}
Let $A'$ be a compact algebraic set of $\mathbb{R}^{N'}$ (with some $N' \geq 3n$) such that $B = \pi(A')$, where $\pi$ is the projection onto the first coordinates: $\mathbb R^{N'} \to \mathbb{R}^{3n}$. Since $A'$ is compact, there is a $\lambda$ such that $\pi_2(A') \in [-\lambda, \lambda]^{N'-3n}$, where $\pi_2$ is the projection onto the last coordinates: $\mathbb R^{N'} \to \mathbb{R}^{N'-3n}$. Then the projection of the compact algebraic set
\tiny \[ A:= \left\{ \left(\begin{pmatrix}x_1 \\ y_1 \\ z_1\end{pmatrix}, \dots, \begin{pmatrix}x_n \\ y_n \\ z_n\end{pmatrix}, \begin{pmatrix}x_{n+1} \\ y_{n+1} \\ 0\end{pmatrix}, \dots, \begin{pmatrix}x_{N'-2n} \\ y_{N'-2n} \\ 0\end{pmatrix} \right) ~ \middle\arrowvert ~ \begin{aligned} \left(x_1, y_1, z_1, \dots, x_n, y_n, z_n, \lambda x_{n+1}, \dots, \lambda x_{N'-2n}\right) \in A' \\ x_{n+1}^2+y_{n+1}^2 = 1, \dots, x_{N'-2n}^2+y_{N'-2n}^2 = 1 \end{aligned} \right\} \]
\normalsize (where $A \subseteq (\mathbb S^2)^{N' - 2n}$) is exactly $B$.
\end{proof}

Of course, Proposition~\ref{projAlgSph} extends to $\mathbb S^d$ with any $d \geq 2$.

\section{Generalities on linkages}

In the present section, we develop generalities on linkages which apply to the Minkowski plane, the hyperbolic plane and the sphere.
Thus, we consider a smooth manifold $\mathcal M$ endowed with a distance function 
\[ \delta: \mathcal{M} \times \mathcal{M} \to \mathbb{R}_{\geq 0} \cup i\mathbb{R}_{\geq 0}. \] 
In the case of a Riemannian manifold (in particular, for the hyperbolic plane and the sphere), the metric determines a real-valued distance on $\mathcal M$.

In the case of the Minkowski plane, $\mathcal M$ is the plane $\mathbb R^2$. Here, we argue by a naive algebraic analogy and define a distance as \[ \delta\left(\begin{pmatrix}x_1 \\ t_1\end{pmatrix}, \begin{pmatrix}x_2 \\ t_2\end{pmatrix} \right) = \sqrt{(x_2-x_1)^2 - (t_2-t_1)^2} \in \mathbb{R}_{\geq 0} \cup i\mathbb{R}_{\geq 0}. \]

% Thus, we consider a pseudo-Riemannian manifold $\mathcal M$, which covers the case where $\mathcal M$ is a linear space endowed with a non-degenerate quadratic form $q$. There is no standard  distance structure associated to such a form, but we will here argue by a naive algebraic analogy and define a distance as: \[ \begin{aligned} \delta: \mathcal{M} \times \mathcal{M} & \to \mathbb{R}_{\geq 0} \cup i\mathbb{R}_{\geq 0} \\ (x, y) & \to \sqrt{q(y-x)}. \end{aligned} \] 

Accordingly, the length structure of the linkage will be generalized by taking values in $\mathbb{R}_{\geq 0} \cup i\mathbb{R}_{\geq 0}$ (instead of $\mathbb{R}_{\geq 0}$) as follows:

\begin{definition} \label{defLinkage}
A \emph{linkage} $\mathcal{L}$ in $\mathcal{M}$ is a graph $(V, E)$ together with:
\begin{enumerate}
\item A function $l: E \to \mathbb{R}_{\geq 0} \cup i\mathbb{R}_{\geq 0}$ (which gives the length of each edge\footnote{Of course, when $\mathcal{M}$ is a Riemannian manifold, we may choose all the lengths in $\mathbb{R}_{\geq 0}$!});
\item A subset $F \subseteq V$ of \emph{fixed vertices} (represented by 
\begin{tikzpicture}[scale=1]
\node[fixed] (a) at (0,0) {};
\end{tikzpicture}
on the figures);
\item A function $\phi_0: F \to \mathcal{M}$ which indicates where the vertices of $F$ are fixed;
\end{enumerate}

When the linkage is named $\mathcal{L}_1$, we usually write $\mathcal{L}_1 = (V_1, E_1, l_1, \dots)$ and name its vertices $a_1, b_1, c_1, \dots$. If the linkage $\mathcal{L}_1$ is a copy of the linkage $\mathcal{L}$, the vertex $a_1 \in V_1$ corresponds to the vertex $a \in V$, and so on.
\end{definition}

\begin{definition}
Let $\mathcal{L}$ be a linkage in $\mathcal{M}$. A \emph{realization} of a linkage $\mathcal{L}$ in $\mathcal{M}$ is a function $\phi: V \to \mathcal{M}$ such that:
\begin{enumerate}
\item For each edge $v_1v_2 \in E$, $\delta(\phi(v_1), \phi(v_2)) = l(v_1v_2)$;
\item $\phi|_F = \phi_0$.
\end{enumerate}
\end{definition}

\begin{remarque}
On the figures of this paper, linkages are represented by abstract graphs. The edges are not necessarily represented by straight segments, and the positions of the vertices on the figures do not necessarily correspond to a realization (unless otherwise stated).
\end{remarque}

\begin{definition}
Let $\mathcal{L}$ be a linkage in $\mathcal{M}$. Let $W \subseteq V$. The \emph{partial configuration space of $\mathcal{L}$ in $\mathcal{M}$ with respect to $W$} is
\[ \mathrm{Conf}_{\mathcal{M}}^W(\mathcal{L}) = \left\{\phi|_W ~ \middle\arrowvert ~ \phi \text{ realization of }\mathcal{L} \right\}. \]

In other words, $\mathrm{Conf}_{\mathcal{M}}^W(\mathcal{L})$ is the set of all the maps $\phi: W \to \mathcal M$ which extend to realizations of $\mathcal L$. In particular, the \emph{configuration space} $\mathrm{Conf}_{\mathcal{M}}(\mathcal{L}) = \mathrm{Conf}_{\mathcal{M}}^V(\mathcal{L})$ is the set of all realizations of $\mathcal{L}$.
\end{definition}

\begin{definition} \label{defInputs}
A \emph{marked linkage} is a tuple $(\mathcal{L}, P, Q)$, where $P$ and $Q$ are subsets of $V$: $P$ is called the ``input set" and its elements, called the ``inputs", are represented by
\begin{tikzpicture}[scale=1]
\node[input] (a) at (0,0) {};
\end{tikzpicture}
on the figures, whereas $Q$ is called the ``output set" and its elements, called the ``outputs", are represented by 
\begin{tikzpicture}[scale=1]
\node[output] (a) at (0,0) {};
\end{tikzpicture}
on the figures.

The \emph{input map} $p: \mathrm{Conf}_{\mathcal{M}}(\mathcal{L}) \to \mathcal{M}^P$ is the map induced by the projection $\mathcal{M}^V \to \mathcal{M}^P$ (the restriction map). In other words, for all $\phi \in \mathrm{Conf}_{\mathcal{M}}(\mathcal{L})$, we have $p(\phi) = \phi|_P$.

Likewise, we define the \emph{output map} $q: \mathrm{Conf}_{\mathcal{M}}(\mathcal{L}) \to \mathcal{M}^Q$ by $q(\phi) = \phi|_Q$.
\end{definition}

The notion of marked linkage is not necessary to study configuration spaces. However, in the linkages we use in our proofs, some vertices play an important role (the inputs and the outputs) while others do not: this is why we always consider marked linkages. The following notion\footnote{already defined informally at the beginning of Section~\ref{sectIngred}} accounts for the names ``inputs'' and ``outputs'':

\begin{definition}
We say that $\mathcal{L}$ is a \emph{functional linkage} for the input-output function $f: \mathrm{Conf}_{\mathcal{M}}^P(\mathcal{L}) \to \mathcal{M}^Q$ if
\[ \forall \phi \in \mathrm{Conf}_{\mathcal{M}}(\mathcal{L}) ~~~~~~~~ f(p(\phi)) = q(\phi). \]
\end{definition}

\section{Regularity}

\begin{definition} \label{defR}
Let $\mathcal{L}$ be a linkage. Let $W \subseteq V$ and $\psi \in \mathrm{Conf}_{\mathcal{M}}^P(\mathcal{L})$. Let $\pi_W$ be the restriction map \[ \pi_W: \mathrm{Conf}_{\mathcal{M}}^{W \cup P}(\mathcal{L}) \to \mathrm{Conf}_{\mathcal{M}}^P(\mathcal{L}). \] We say that \emph{$\psi$ is a regular input for $W$} if there exists an open neighborhood $U \subseteq \mathrm{Conf}_{\mathcal{M}}^P(\mathcal{L})$ of $\psi$ such that $\pi_W|_{\pi_W^{-1}(U)}$ is a finite smooth covering\footnote{We do not require $U$ or $\pi_W^{-1}(U)$ to be smooth manifolds: recall that a smooth map on $\pi_W^{-1}(U)$ is, by definition, the restriction of a smooth map defined on the ambient $\mathcal M^{W \cup P}$.}.

We write $\mathrm{Reg}_\mathcal{M}^P(\mathcal{L}, W) \subseteq \mathrm{Conf}_{\mathcal{M}}^P(\mathcal{L})$ the set of regular inputs for $W$. When $W$ is the set $V$ of all vertices, we simply write $\mathrm{Reg}_\mathcal{M}^P(\mathcal{L})$.
\end{definition}

Roughly speaking, $\psi$ is a regular input for $W$ if it determines a finite number of realizations $\phi$ of $W$, and if these configurations are determined smoothly with respect to $\psi$ (in other words, $\pi_W^{-1}$ is a smooth multivalued function in a neighborhood of $\psi$).

\smallskip

The following fact is simple but essential:

\begin{fact} For any $W_1, W_2 \subseteq V$, we have \[ \mathrm{Reg}_\mathcal{M}^P(\mathcal{L}, W_1) \cap \mathrm{Reg}_\mathcal{M}^P(\mathcal{L}, W_2) \subseteq \mathrm{Reg}_\mathcal{M}^P(\mathcal{L}, W_1 \cup W_2). \] \end{fact}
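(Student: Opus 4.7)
My plan is to realize $\pi_{W_1 \cup W_2}^{-1}(U)$, for $U = U_1 \cap U_2$ a common neighborhood of $\psi$ on which each $\pi_{W_i}$ is a finite smooth covering, as a union of sheets of an ambient covering. The natural restriction map
\[ \iota : \pi_{W_1 \cup W_2}^{-1}(U) \longrightarrow Z := \pi_{W_1}^{-1}(U) \times_U \pi_{W_2}^{-1}(U), \qquad \phi \longmapsto \bigl(\phi|_{W_1 \cup P},\,\phi|_{W_2 \cup P}\bigr), \]
is a continuous injection whose inverse on the image is $(\phi_1,\phi_2) \mapsto \phi_1 \cup \phi_2$, hence a topological embedding. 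Since $Z \to U$ is a fiber product of two finite smooth coverings, it is itself a finite smooth covering of $U$; in particular the fiber $\pi_{W_1 \cup W_2}^{-1}(\psi)$ is finite, bounded by $k_1 k_2$ where $k_i$ is the number of sheets of $\pi_{W_i}$ near $\psi$.

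Next, I would decompose $Z$ locally as $\bigsqcup_{(a,b)} s_{a,b}(U)$, a disjoint union of finitely many smooth sections $s_{a,b}=(\tau_1^a,\tau_2^b)$, and set
\[ A_{a,b} := \bigl\{ x \in U : s_{a,b}(x) \in \iota\bigl(\pi_{W_1 \cup W_2}^{-1}(U)\bigr) \bigr\}, \]
so that $\iota$ identifies $\pi_{W_1 \cup W_2}^{-1}(U)$ with $\bigsqcup_{(a,b)} s_{a,b}(A_{a,b})$, the restriction of $\pi_{W_1 \cup W_2}$ becoming the inclusion $A_{a,b} \hookrightarrow U$ on each piece. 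Each $A_{a,b}$ is closed in $U$: it is cut out by the closed conditions that the two components of $s_{a,b}(x)$ agree on $W_1 \cap W_2$ and that the resulting joint map lies in $\mathrm{Conf}^{W_1 \cup W_2 \cup P}$ (closed in its ambient space as a projection of a closed configuration space).

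The remaining task is to show that each $A_{a,b}$ with $\psi \in A_{a,b}$ contains a full neighborhood of $\psi$. Granted this, each such $A_{a,b}$ is clopen near $\psi$; restricting to the connected component $U'$ of $\psi$ in a sufficiently small neighborhood, we obtain
\[ \pi_{W_1 \cup W_2}^{-1}(U') = \bigsqcup_{(a,b) \in S} s_{a,b}(U'), \qquad S = \{(a,b) : \psi \in A_{a,b}\}, \]
which is a finite trivial smooth covering of $U'$. This gives $\psi \in \mathrm{Reg}^P_{\mathcal M}(\mathcal L, W_1 \cup W_2)$.

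The main obstacle is this openness step. It reduces to a uniqueness statement for local smooth sections: the two smooth local maps $x \mapsto \tau_1^a(x)|_{(W_1 \cap W_2)\cup P}$ and $x \mapsto \tau_2^b(x)|_{(W_1 \cap W_2)\cup P}$ are both smooth sections of the restriction $\pi_{W_1 \cap W_2}:\mathrm{Conf}^{(W_1 \cap W_2) \cup P} \to \mathrm{Conf}^P$, they coincide at $\psi$, and we must show they coincide on a neighborhood. To handle this, I would exploit that the fibers of $\pi_{W_1 \cap W_2}$ over points of $U$ are all finite---each fiber over $x$ injects into $\pi_{W_1}^{-1}(x)$ via restriction from $X_1$, so has size at most $k_1$---and that the two competing sections are smooth, so that a semialgebraic stratification argument forces their coincidence locus to be open at $\psi$ rather than a proper subvariety through it. Once the two sections agree on a neighborhood, the joint map $\tau_1^a \cup \tau_2^b$ is well-defined there, and the corresponding realizations of the full linkage are obtained by continuity from the one extending $\phi_0$, establishing that $A_{a,b}$ is indeed open at $\psi$.
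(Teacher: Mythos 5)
The paper states this Fact without proof (it is introduced as ``simple but essential''), so there is no official argument to compare against; judged on its own terms, your sketch sets the problem up well but leaves the decisive step unproved. Your formalization --- the fiber product $Z$ of the two coverings over a common neighborhood $U$, the embedding of $\pi_{W_1\cup W_2}^{-1}(U)$ into $Z$, and the decomposition into partial sheets $s_{a,b}$ over loci $A_{a,b}$ --- is reasonable and correctly isolates where the content lies. But two auxiliary claims are unjustified as written. Closedness of $A_{a,b}$ is deduced from $\mathrm{Conf}_{\mathcal M}^{W_1\cup W_2\cup P}(\mathcal L)$ being ``closed as a projection of a closed configuration space''; projections of closed sets need not be closed, and while this is harmless when $\mathrm{Conf}_{\mathcal M}(\mathcal L)$ is compact (hyperbolic and spherical cases), the Minkowski chapter deals precisely with non-compact configuration spaces, so this needs a properness or semi-algebraicity argument rather than an assertion. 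Likewise, the uniqueness statement for two smooth sections of $\pi_{W_1\cap W_2}$ agreeing at $\psi$ does not follow from the hypotheses: nothing is assumed about regularity for $W_1\cap W_2$, two distinct branches of a finite-to-one restriction map can perfectly well meet at a single point, and ``a semialgebraic stratification argument'' is an appeal to a lemma you have not formulated, let alone proved.

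The more serious gap is that your openness step addresses the wrong condition. Membership of $x$ in $A_{a,b}$ requires not only that $\tau_1^a(x)$ and $\tau_2^b(x)$ agree on $W_1\cap W_2$, but that the glued partial realization extends to a realization of the \emph{whole} linkage; your proposal dispatches this with ``obtained by continuity'', yet extendability is a closed condition, not an open one, and it can fail at inputs arbitrarily close to $\psi$. Concretely, take $W_1=\{u\}$, $W_2=\{v\}$ and an auxiliary vertex $w\notin W_1\cup W_2\cup P$ joined by edges to $u$ and $v$: a pair of sheets is jointly realizable exactly when $\delta(\tau_1^a(x),\tau_2^b(x))$ satisfies the robotic-arm inequality for $w$, and one can arrange $\psi$ to lie exactly on the boundary of that inequality for one pairing while all other pairings are strictly admissible. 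Then $\pi_{\{u\}}$ and $\pi_{\{v\}}$ remain two-sheeted smooth coverings near $\psi$ (each sheet keeps a strictly admissible partner), but the fiber of $\pi_{\{u,v\}}$ changes cardinality across a hypersurface through $\psi$, so no neighborhood of $\psi$ is evenly covered. This shows the openness of $A_{a,b}$ at $\psi$ is not a formal consequence of the two stated regularity hypotheses: it requires non-degeneracy information about the vertices outside $W_1\cup W_2\cup P$ (which is what the paper supplies in practice, since it always verifies regularity vertex by vertex for \emph{all} vertices before invoking this Fact). As it stands, your argument cannot close this gap, and any correct proof must either import such additional hypotheses or restrict the situations in which the Fact is applied.
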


Therefore, in practice, in order to prove that $\mathrm{Reg}_\mathcal{M}^P(\mathcal{L}) = \mathrm{Conf}_\mathcal{M}^P(\mathcal{L})$, we only have to prove that $\mathrm{Reg}_\mathcal{M}^P(\mathcal{L}, \{v\}) = \mathrm{Conf}_\mathcal{M}^P(\mathcal{L})$ for all $v \in V$.

\section{Changing the input set}
In this proposition, we take a linkage, then consider the same linkage with a different set of inputs $P$ and analyse the impact on $\mathrm{Reg}_\mathcal{M}^P(\mathcal{L})$.
\begin{fact} \label{changeInputs}
Let $\mathcal{L}_1 = (V_1, E_1, l_1, F_1, \phi_{01}, P_1, Q_1)$, $P_2 \subseteq V_1$ and define
\[ \mathcal{L}_2 = (V_1, E_1, l_1, F_1, \phi_{01}, P_2, Q_1). \] Recall that $p_1 : \mathrm{Conf}_\mathcal{M}(\mathcal L_1) \to \mathcal{M}^{P_1}$ and $p_2 : \mathrm{Conf}_\mathcal{M}(\mathcal L_2) \to \mathcal{M}^{P_2}$ are the respective input maps of $\mathcal L_1$ and $\mathcal L_2$. Then $\mathrm{Reg}_\mathcal{M}^{P_2}(\mathcal{L}_2)$ contains
\[ \left\{ \psi \in \mathrm{Conf}_{\mathcal{M}}^{P_2}(\mathcal{L}_2) ~ \middle\arrowvert ~ \forall \phi \in p_2^{-1}(\psi) ~~~ p_1(\phi) \in \mathrm{Reg}_\mathcal{M}^{P_1}(\mathcal{L}_1) \right\} \cap \mathrm{Reg}_\mathcal{M}^{P_2}(\mathcal{L}_2, P_1). \]
\end{fact}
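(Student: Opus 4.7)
The plan is to factor $p_2 = s \circ r$, where $r : \mathrm{Conf}_{\mathcal M}(\mathcal L_1) \to \mathrm{Conf}_{\mathcal M}^{P_1 \cup P_2}(\mathcal L_1)$ and $s : \mathrm{Conf}_{\mathcal M}^{P_1 \cup P_2}(\mathcal L_1) \to \mathrm{Conf}_{\mathcal M}^{P_2}(\mathcal L_1)$ are the natural restriction maps. Observe that $\mathrm{Conf}_{\mathcal M}(\mathcal L_1) = \mathrm{Conf}_{\mathcal M}(\mathcal L_2)$ since the input/output marking does not affect the set of realizations, and that $p_1 = s' \circ r$, where $s'$ denotes the restriction to $P_1$. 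The hypothesis $\psi \in \mathrm{Reg}_{\mathcal M}^{P_2}(\mathcal L_2, P_1)$ provides a neighbourhood $U_1$ of $\psi$ over which $s$ is a smooth finite covering; the first hypothesis provides, for each $\phi \in p_2^{-1}(\psi)$, a neighbourhood of $p_1(\phi)$ over which $p_1$ is a smooth finite covering. The goal is to combine these into a single smooth finite covering by $p_2$ near $\psi$.

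\textbf{Finiteness of $p_2^{-1}(\psi)$.} The set $s^{-1}(\psi) = \{\sigma_1, \ldots, \sigma_k\}$ is finite by the second hypothesis. For each $i$ with $r^{-1}(\sigma_i) \neq \varnothing$, any $\phi \in r^{-1}(\sigma_i) \subseteq p_2^{-1}(\psi)$ satisfies $p_1(\phi) = \sigma_i|_{P_1} \in \mathrm{Reg}_{\mathcal M}^{P_1}(\mathcal L_1)$ by the first hypothesis, so $p_1^{-1}(\sigma_i|_{P_1})$ is finite, hence so is $r^{-1}(\sigma_i) \subseteq p_1^{-1}(\sigma_i|_{P_1})$. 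Therefore $p_2^{-1}(\psi) = \bigcup_i r^{-1}(\sigma_i)$ is finite.

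\textbf{Local diffeomorphism at each $\phi$.} Fix $\phi \in p_2^{-1}(\psi)$ and set $\sigma = r(\phi)$. Choose a sheet $Z \ni \phi$ of the $p_1$-covering (so $p_1|_Z : Z \to V$ is a diffeomorphism onto an open $V \ni p_1(\phi)$) and a sheet $W \ni \sigma$ of the $s$-covering (so $s|_W : W \to U_1$ is a diffeomorphism), shrinking so that $r(Z) \subseteq W$. The factorisation $p_1|_Z = s'|_{r(Z)} \circ r|_Z$, together with $p_1|_Z$ being a diffeomorphism, forces $r|_Z$ to be injective with smooth left inverse $(p_1|_Z)^{-1} \circ s'|_{r(Z)}$. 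Using that each restriction between partial configuration spaces is surjective yields the dimension chain $\dim \mathrm{Conf}_{\mathcal M}^{P_1}(\mathcal L_1) \leq \dim \mathrm{Conf}_{\mathcal M}^{P_1 \cup P_2}(\mathcal L_1) \leq \dim \mathrm{Conf}_{\mathcal M}(\mathcal L_1)$, and the equalities forced by the two hypotheses make all four spaces share the same local dimension at $p_1(\phi)$, $\sigma$, $\phi$, $\psi$. Semi-algebraic invariance of domain then implies that $r(Z)$ is open in $W$, so $r|_Z$ is a diffeomorphism onto an open subset of $W$; composing with $s|_W$, we conclude that $p_2|_Z$ is a diffeomorphism onto an open neighbourhood $p_2(Z)$ of $\psi$.

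\textbf{Assembling the covering.} The sheets $Z_\phi$ obtained for varying $\phi \in p_2^{-1}(\psi)$ are pairwise disjoint: if $r(\phi) = r(\phi')$, they lie in distinct sheets of the $p_1$-covering over $V$, while if $r(\phi) \neq r(\phi')$, their $r$-images lie in disjoint sheets of the $s$-covering over $U_1$. Setting $U = \bigcap_\phi p_2(Z_\phi)$ and shrinking via a standard properness argument (so that no point of $p_2^{-1}(U)$ escapes $\bigcup_\phi Z_\phi$) yields $p_2^{-1}(U) = \bigsqcup_\phi (Z_\phi \cap p_2^{-1}(U))$, exhibiting $p_2$ as a smooth finite covering over $U$ and proving $\psi \in \mathrm{Reg}_{\mathcal M}^{P_2}(\mathcal L_2)$. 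The main obstacle is the careful use of invariance of domain at potentially singular semi-algebraic configuration spaces, and the properness/exhaustiveness step, which is particularly subtle in the Minkowski setting where configuration spaces need not be compact.
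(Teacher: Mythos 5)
Your factorization $p_2 = s\circ r$ is the natural starting point (the paper's own proof is essentially the observation that the finite smooth local sections of $s$ composed with those of $p_1$ give finite smooth local sections of $p_2$), and your finiteness step is fine, but the decisive step of your argument has a genuine gap. To show that $p_2|_Z$ covers a neighbourhood of $\psi$ you claim that $r(Z)$ is open in $W$, deducing this from an equality of local dimensions together with ``semi-algebraic invariance of domain''. Neither half is justified. The global inequalities $\dim\mathrm{Conf}_{\mathcal M}^{P_1}(\mathcal L_1)\le\dim\mathrm{Conf}_{\mathcal M}^{P_1\cup P_2}(\mathcal L_1)\le\dim\mathrm{Conf}_{\mathcal M}(\mathcal L_1)$ do not localize at the points $p_1(\phi)$, $\sigma$, $\phi$, $\psi$: the missing inequality $\dim_{\sigma}\mathrm{Conf}_{\mathcal M}^{P_1\cup P_2}(\mathcal L_1)\le\dim_{\phi}\mathrm{Conf}_{\mathcal M}(\mathcal L_1)$ would come from local surjectivity of $r$ near $\phi$, which is exactly what you are trying to prove. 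More seriously, invariance of domain fails for singular semi-algebraic sets, and Definition~\ref{defR} explicitly does not assume the configuration spaces are manifolds: the inclusion of the line $\{y=0\}$ into the node $\{xy=0\}$ is injective, smooth, admits a smooth left inverse (restriction of $(x,y)\mapsto(x,0)$), and both sets have local dimension $1$ at the origin, yet the image is not open. So ``injective with smooth left inverse, equal dimension'' cannot give openness; a correct argument must exploit the covering structure itself, e.g.\ that $W$ maps homeomorphically onto $U_1$ by $s$, that every point of $W$ extends to a full realization which must lie on one of the finitely many $p_1$-sheets over a neighbourhood of $p_1(\phi)$, and a local-connectedness argument identifying which sheet; none of this appears in your write-up. (Note also that the Fact is stated for a general manifold $\mathcal M$ with a distance, where semi-algebraic tools are not even available.)

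The assembling step has a secondary problem: you invoke ``a standard properness argument'' to ensure no point of $p_2^{-1}(U)$ escapes $\bigcup_\phi Z_\phi$, but $p_2$ is not proper in general (as you yourself note, Minkowski configuration spaces are non-compact), and no such hypothesis is available. Properness is in fact unnecessary: after shrinking $U$ so that, for each sheet $W_i$ of the $s$-covering, the $P_1$-restriction of its section over $U$ lands in the chosen $p_1$-regular neighbourhood $V_i$, one has $p_2^{-1}(U)\subseteq\bigcup_i r^{-1}(W_i)\subseteq\bigcup_i p_1^{-1}(V_i)$, i.e.\ the fibre is automatically trapped in the $p_1$-sheets. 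As it stands, however, the openness step is unproved, so the proposal does not establish the Fact.
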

\begin{proof}[Proof]
This is a simple consequence of the fact that the composition of two smooth functions is a smooth function. \end{proof}

\section{Combining linkages} \label{sectComb}
This notion is essential to construct complex linkages from elementary ones. The proofs in this section are straightforward and left to the reader.

Let $\mathcal{L}_1 = (V_1, E_1, l_1, F_1, \phi_{01}, P_1, Q_1)$ and $\mathcal{L}_2 = (V_2, E_2, l_2, F_2, \phi_{02}, P_2, Q_2)$ be two linkages, $W_1 \subseteq V_1$, and $\beta: W_1 \to V_2$.

The idea is to construct a new linkage $\mathcal{L}_3 = \mathcal{L}_1 \cup_\beta \mathcal{L}_2$ as follows:
\subparagraph{Step 1} Consider $\mathcal{L}_1 \cup \mathcal{L}_2$, the disjoint union of the two graphs $(V_1, E_1)$ and $(V_2, E_2)$.
\subparagraph{Step 2} Identify some vertices of $V_1$ with some vertices of $V_2$ via $\beta$, without removing any edge.

~

Since linkages are graphs which come with an additional structure, we need to clarify what happens to the other elements ($l$, $F$, $\phi_0$, $P$, $Q$). In particular, note that the inputs of $\mathcal{L}_2$ which are in $\beta(W_1)$ are \emph{not} considered as inputs in the new linkage $\mathcal{L}_3$.

\begin{definition}[Combining two linkages] \label{comblinkages}
We define \[ \mathcal{L}_3 = \mathcal{L}_1 \cup_\beta \mathcal{L}_2 = (V_3, E_3, l_3, F_3, \phi_{03}, P_3, Q_3) \] in the following way:
\begin{enumerate}
\item $V_3 = (V_1 \setminus W_1) \cup V_2$;
\item $E_3 = (E_1 \cap (V_1 \setminus W_1)^2) \cup (E_2 \cap V_2^2) \cup \left\{v\beta(v') ~ \middle\arrowvert ~ v \in V_1 \setminus W_1, v' \in W_1, vv' \in E_1\right\}$ \\ $\cup \left\{\beta(v)\beta(v') ~ \middle\arrowvert ~ v, v' \in W_1, vv' \in E_1\right\}$;
\item For all $v_1, v_1' \in V_1 \setminus W_1$, $w_1, w_1' \in W_1$, $v_2, v_2' \in V_2$, define
\[ \begin{aligned} l_3(v_1v_1') = l_1(v_1v_1'), & ~~~ l_3(v_1\beta(w_1)) = l_1(v_1w_1), \\ l_3(v_2v_2') = l_2(v_2v_2'), & ~~~ l_3(\beta(w_1)\beta(w_1')) = l_1(w_1w_1'); \end{aligned} \]
\item $F_3 = (F_1 \setminus W_1) \cup \beta (F_1 \cap W_1) \cup F_2$;
\item $\phi_{03}|_{F_1 \setminus W_1} = \phi_{01}|_{F_1 \setminus W_1}, ~~~ \phi_{03} \circ \beta = \phi_{01}|_{W_1}, ~~~ \phi_{03}|_{F_2 \setminus \beta(W_1)} = \phi_{02}|_{F_2 \setminus \beta(W_1)}$;
\item $P_3 = (P_1 \setminus W_1) \cup \beta (P_1 \cap W_1) \cup (P_2 \setminus \beta(W_1))$;
\item $Q_3 = (Q_1 \setminus W_1) \cup Q_2$.
\end{enumerate}
The combination of two linkages is prohibited in the following cases:
\begin{enumerate}
\item There exist $a_1, b_1 \in F_1 \cap W_1$ such that $\beta(a_1) = \beta(b_1)$ and $\phi_{01}(a_1) \neq \phi_{01}(b_1)$ (two vertices are fixed at different places but should be attached to the same other vertex).
\item There exist $a_1, b_1 \in W_1$ such that $a_1b_1 \in E_1$, $\beta(a_1)\beta(b_1) \in E_2$, and $l_1(a_1b_1) \neq l_2(\beta(a_1)\beta(b_1))$ (two edges of different lengths should join one couple of vertices).
\item There exist $a_1 \in V_1$ and $b_1, c_1 \in W_1$, such that $a_1b_1 \in E_1$, $a_1c_1 \in E_1$, $\beta(b_1) = \beta(c_1)$, and $l_1(a_1b_1) \neq l_1(a_1c_1)$ (again, two edges of different lengths should join one couple of vertices).
\end{enumerate}
\end{definition}

\begin{exemple}
Consider the two identical linkages $\mathcal{L}_1$ and $\mathcal{L}_2$:

\begin{center}
\begin{tikzpicture}[scale=1]
\node[input, label=left:$a_1$] (a) at (-1,0) {};
\node[input, label=right:$b_1$] (b) at (1,0) {};
\node[output, label=right:$c_1$] (c) at (0,2) {};
\draw (c) to[bend right] node[edge] {$l_2$} (a);
\draw (c) to[bend left] node[edge] {$l_1$} (b);
\end{tikzpicture}
\begin{tikzpicture}[scale=1]
\node[input, label=left:$a_2$] (a) at (-1,0) {};
\node[input, label=right:$b_2$] (b) at (1,0) {};
\node[output, label=right:$c_2$] (c) at (0,2) {};
\draw (c) to[bend right] node[edge] {$l_2$} (a);
\draw (c) to[bend left] node[edge] {$l_1$} (b);
\end{tikzpicture}
\end{center}

The inputs of $\mathcal{L}_i$ are $a_i, b_i$ and the output is $c_i$.

To combine the two linkages, let $W_1 = \{c_1\}$ and $\beta(c_1) = a_2$. Then $\mathcal{L}_3 := \mathcal{L}_1 \cup_\beta \mathcal{L}_2$ is the following linkage:

\begin{center}
\begin{tikzpicture}[scale=1]
\node[input, label=left:$a_1$] (a1) at (-1,0) {};
\node[input, label=right:$b_1$] (b1) at (1,0) {};
\node[vertex, label=right:$a_2$] (a2) at (0,2) {};
\node[input, label=right:$b_2$] (b2) at (2,2) {};
\node[output, label=right:$c_2$] (c2) at (1,4) {};
\draw (a2) to[bend right] node[edge] {$l_2$} (a1);
\draw (a2) to[bend left] node[edge] {$l_1$} (b1);
\draw (c2) to[bend right] node[edge] {$l_2$} (a2);
\draw (c2) to[bend left] node[edge] {$l_1$} (b2);
\end{tikzpicture}
\end{center}

The inputs of $\mathcal{L}_3$ are $a_1, b_1, b_2$ and the output is $c_2$.

\end{exemple}

\bigskip

We end this section with three facts whose proofs are straightforward. The first describes $\mathrm{Conf}_{\mathcal{M}}(\mathcal{L})$ when $\mathcal L$ is obtained as the combination of two linkages, the second one describes $\mathrm{Reg} _{\mathcal{M}}(\mathcal{L})$, while the third one establishes a link between the combination of functional linkages and the composition of functions.

\begin{fact} \label{propCombConfSpaces}
Let $\mathcal{L}_1$, $\mathcal{L}_2$ be two linkages, $W_1 \subseteq V_1$, $\beta: W_1 \to V_2$, and $\mathcal{L}_3 = \mathcal{L}_1 \cup_\beta \mathcal{L}_2$ be defined as in Definition~\ref{comblinkages}. Then
\[ \mathrm{Conf}_{\mathcal{M}}(\mathcal{L}_3) = \left\{ \phi_3 \in \mathcal{M}^{V_3} ~ \middle\arrowvert ~ \exists (\phi_1, \phi_2) \in \mathrm{Conf}_{\mathcal{M}}(\mathcal{L}_1) \times \mathrm{Conf}_{\mathcal{M}}(\mathcal{L}_2) \right. \] \[ \left. \phi_1|_{V_1 \setminus W_1} = \phi_3|_{V_1 \setminus W_1}, ~ \phi_1|_{W_1} = \phi_3|_{\beta(W_1)} \circ \beta, ~ \phi_2 = \phi_3|_{V_2} \right\}. \]
\end{fact}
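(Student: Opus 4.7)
The plan is to prove set-equality by establishing both inclusions, unpacking the piecewise structure of $\mathcal{L}_3$ inherited from Definition~\ref{comblinkages}.

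For the inclusion $\supseteq$, start with a pair $(\phi_1,\phi_2) \in \mathrm{Conf}_{\mathcal{M}}(\mathcal{L}_1) \times \mathrm{Conf}_{\mathcal{M}}(\mathcal{L}_2)$ satisfying the compatibility relations, together with the associated $\phi_3 \in \mathcal{M}^{V_3}$. I then verify that $\phi_3$ is a realization of $\mathcal{L}_3$ by going through the four kinds of edges listed in item 2 of Definition~\ref{comblinkages}: edges inside $V_1 \setminus W_1$ and edges of the form $v\beta(w)$ or $\beta(w)\beta(w')$ all come, by construction, with the length of the corresponding edge of $\mathcal{L}_1$, and $\phi_1$ respects those lengths; edges inside $V_2$ come with the length of the corresponding edge of $\mathcal{L}_2$, and $\phi_2$ respects those. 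The fixed-vertex condition $\phi_3|_{F_3} = \phi_{03}$ is similarly split according to whether the vertex lies in $F_1 \setminus W_1$, in $\beta(F_1 \cap W_1)$, or in $F_2$, each case being handled by $\phi_{01}$ or $\phi_{02}$ via the compatibility equations.

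For the inclusion $\subseteq$, start with $\phi_3 \in \mathrm{Conf}_{\mathcal{M}}(\mathcal{L}_3)$ and define $\phi_2 := \phi_3|_{V_2}$, together with $\phi_1 := \phi_3|_{V_1 \setminus W_1}$ on $V_1 \setminus W_1$ and $\phi_1 := \phi_3 \circ \beta$ on $W_1$. The required relations $\phi_1|_{V_1 \setminus W_1} = \phi_3|_{V_1 \setminus W_1}$, $\phi_1|_{W_1} = \phi_3|_{\beta(W_1)} \circ \beta$, and $\phi_2 = \phi_3|_{V_2}$ are then immediate. To see that $\phi_1$ is a realization of $\mathcal{L}_1$, I again split $E_1$ according to whether each endpoint lies in $V_1 \setminus W_1$ or in $W_1$; in every case the edge in $E_1$ is matched (via the four bullet points in the definition of $E_3$ and of $l_3$) to an edge in $E_3$ of the same length on which $\phi_3$ already satisfies the length constraint, so $\phi_1$ satisfies it too. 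The realization property of $\phi_2$ is even easier since $E_2 \cap V_2^2 \subseteq E_3$ with matching lengths. The fixed-vertex conditions are checked the same way, using that $F_3$ has been designed so that $F_1$ and $F_2$ each map to $F_3$ under the identification.

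The only delicate bookkeeping is the case $W_1 \cap F_1 \neq \emptyset$, where a vertex $w \in W_1$ that was fixed in $\mathcal{L}_1$ becomes identified with $\beta(w) \in V_2$: the definition of $\phi_{03}$ forces $\phi_3(\beta(w)) = \phi_{01}(w)$, and conversely the prohibition clauses (distinct fixed positions under $\beta$, incompatible edge lengths, two edges of different lengths sharing endpoints after $\beta$) are precisely what is needed to guarantee that $\phi_1$ and $\phi_2$ constructed above are well-defined and consistent. Apart from this, the proof is a mechanical case-check, which is why the paper states it is left to the reader.
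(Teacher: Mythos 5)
Your proof is correct and is exactly the direct definition-unwinding that the paper has in mind when it declares this fact ``straightforward and left to the reader'': both inclusions follow by matching each edge and fixed vertex of $\mathcal{L}_3$ to its preimage in $\mathcal{L}_1$ or $\mathcal{L}_2$ via the four cases of Definition~\ref{comblinkages}. Your closing remark on the case $W_1 \cap F_1 \neq \emptyset$ is the only point needing any care, and you handle it appropriately.
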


\begin{fact} \label{propComb}
Let $\mathcal{L}_1$, $\mathcal{L}_2$ be two linkages, $W_1 \subseteq V_1$, $\beta: W_1 \to P_2$, and $\mathcal{L}_3 = \mathcal{L}_1 \cup_\beta \mathcal{L}_2$. Suppose that $\psi_3 \in \mathrm{Conf}_{\mathcal{M}}^{P_3}(\mathcal{L}_3)$ satisfies both of the following properties:
\begin{enumerate}
\item $\exists \psi_1 \in \mathrm{Reg}_\mathcal{M}^{P_1}(\mathcal{L}_1) ~~ \psi_1|_{P_1 \setminus W_1} = \psi_3|_{P_1 \setminus W_1}, \psi_1|_{P_1 \cap W_1} = \psi_3|_{\beta(P_1 \cap W_1)} \circ \beta$;
\item $\forall \phi \in p_3^{-1}(\psi_3) ~~ \phi|_{P_2} \in \mathrm{Reg}_\mathcal{M}^{P_2}(\mathcal{L}_2)$.
\end{enumerate}
Then $\psi_3 \in \mathrm{Reg}_\mathcal{M}^{P_3}(\mathcal{L}_3)$.
\end{fact}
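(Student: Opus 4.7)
The plan is to chain the smooth finite coverings supplied by hypotheses~(1) and~(2), using Fact~\ref{propCombConfSpaces} as the dictionary between realizations of $\mathcal{L}_3$ and compatible pairs of realizations of $\mathcal{L}_1$ and $\mathcal{L}_2$. It suffices to produce, on a small enough neighborhood $U_3$ of $\psi_3$ in $\mathrm{Conf}_\mathcal{M}^{P_3}(\mathcal{L}_3)$, finitely many smooth local sections of $p_3$ whose images exhaust $p_3^{-1}(U_3)$.

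First, I would enumerate the (finitely many) realizations $\phi_3^{(1)},\dots,\phi_3^{(m)}$ of $\mathcal{L}_3$ above $\psi_3$ and decompose each as a compatible pair $(\phi_1^{(i)},\phi_2^{(i)})$ via Fact~\ref{propCombConfSpaces}. I would then define a smooth \emph{first-factor input} map $\Psi_1:\mathcal{M}^{P_3}\to\mathcal{M}^{P_1}$ by $\Psi_1(\psi)|_{P_1\setminus W_1}=\psi|_{P_1\setminus W_1}$ and $\Psi_1(\psi)|_{P_1\cap W_1}=\psi|_{\beta(P_1\cap W_1)}\circ\beta$, so that $\Psi_1(\psi_3)=\psi_1\in\mathrm{Reg}_\mathcal{M}^{P_1}(\mathcal{L}_1)$ by hypothesis~(1). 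Each $\phi_1^{(i)}$ determines a smooth local section $\sigma_{1,i}$ of $p_1$ on a neighborhood of $\psi_1$, and the composition $\tilde{\sigma}_{1,i}:=\sigma_{1,i}\circ\Psi_1$ is a smooth map on a common neighborhood $U_3$ of $\psi_3$ taking values in $\mathrm{Conf}_\mathcal{M}(\mathcal{L}_1)$.

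Second, I would build a smooth \emph{second-factor input} map $\Psi_{2,i}:U_3\to\mathcal{M}^{P_2}$ by setting $\Psi_{2,i}(\psi)(\beta(w))=\tilde{\sigma}_{1,i}(\psi)(w)$ for $w\in W_1$ and $\Psi_{2,i}(\psi)=\psi$ on $P_2\setminus\beta(W_1)$. This is well-defined on $U_3$ because every $\psi\in U_3$ already admits a realization of $\mathcal{L}_3$, which by Fact~\ref{propCombConfSpaces} forces $\beta$-equivalent values of any continuous first-factor lift to coincide. By hypothesis~(2), $\Psi_{2,i}(\psi_3)=\psi_2^{(i)}\in\mathrm{Reg}_\mathcal{M}^{P_2}(\mathcal{L}_2)$, so a smooth local section $\sigma_{2,i}$ of $p_2$ near $\phi_2^{(i)}$ yields $\tilde{\sigma}_{2,i}:=\sigma_{2,i}\circ\Psi_{2,i}$. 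Pairing $\tilde{\sigma}_{1,i}$ with $\tilde{\sigma}_{2,i}$ via Fact~\ref{propCombConfSpaces} produces smooth sections $\tilde{\sigma}_i$ of $p_3$ satisfying $\tilde{\sigma}_i(\psi_3)=\phi_3^{(i)}$.

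The main obstacle is purely bookkeeping: one must verify that after shrinking $U_3$, the $m$ sections $\tilde{\sigma}_i$ exhaust $p_3^{-1}(U_3)$, and that the $\beta$-compatibility of the first-factor lifts persists on all of $U_3$ (not just at the basepoint $\psi_3$). Both reduce to propagating pointwise identities at $\psi_3$ by uniqueness of local continuous lifts under the coverings $p_1$ and $p_2$, combined with Fact~\ref{propCombConfSpaces} to ensure that every preimage of $p_3$ indeed arises as a compatible pair reconstructed by the two-stage lifting.
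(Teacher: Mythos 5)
The paper gives no argument for this fact (it is ``left to the reader''), so the only question is whether your two-stage lifting is complete; it is not, because the two points you defer as ``purely bookkeeping'' are exactly the mathematical content, and the tools you invoke do not settle them. The serious one is the exhaustion step. A realization of $\mathcal{L}_3$ over an input $\psi$ near $\psi_3$ decomposes, via Fact~\ref{propCombConfSpaces}, into a pair $(\phi_1,\phi_2)$, and $\phi_1$ lies on \emph{some} sheet of the covering $p_1$ over a neighborhood of $\psi_1$ --- possibly on a sheet through a realization $\phi_1'$ of $\mathcal{L}_1$ over $\psi_1$ which does \emph{not} extend to any realization of $\mathcal{L}_3$ over $\psi_3$. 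Hypothesis~(2) says nothing about such $\phi_1'$, and these configurations are not near $p_3^{-1}(\psi_3)$, so ``uniqueness of local continuous lifts'' cannot reach them: lifting uniqueness only controls realizations connected to the fiber over $\psi_3$. To rule them out one must show that the obstruction at $\psi_3$ (either $\beta$-incompatibility of $\phi_1'$, or the induced $\mathcal{L}_2$-input not lying in $\mathrm{Conf}_{\mathcal{M}}^{P_2}(\mathcal{L}_2)$) persists on a whole neighborhood. The first is a closed condition, but the second requires $\mathrm{Conf}_{\mathcal{M}}^{P_2}(\mathcal{L}_2)$ to be locally closed at that point; this is automatic in the compact settings, but in the Minkowski plane partial configuration spaces are projections of closed sets and need not be closed (the workspace of the Peaucellier input contains the open spacelike cone but no lightlike point), so this is precisely where care is needed. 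If such ``new'' branches appear over inputs arbitrarily close to $\psi_3$, the fiber cardinality of $p_3$ is not constant on any neighborhood and no neighborhood of $\psi_3$ is evenly covered, so your sections through the $\phi_3^{(i)}$ alone cannot establish regularity.

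The second point is that your justification for the well-definedness of $\Psi_{2,i}$ (and hence for the $\beta$-compatibility of $\tilde{\sigma}_{1,i}(\psi)$ away from $\psi_3$) is a non sequitur: the fact that each $\psi\in U_3$ admits \emph{some} realization of $\mathcal{L}_3$ forces $\beta$-equivalent values to coincide for \emph{that} realization's first factor, which need not equal $\tilde{\sigma}_{1,i}(\psi)$, since it may sit on a different sheet of $p_1$. So when $\beta$ is not injective you have shown neither that your candidate sections take values in $\mathrm{Conf}_{\mathcal{M}}(\mathcal{L}_3)$ off the basepoint, nor that they exhaust $p_3^{-1}(U_3)$. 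A complete proof should work sheet by sheet over an evenly covered neighborhood of $\psi_1$: treat the sheets passing through the $\phi_1^{(i)}$ using hypothesis~(2) and evenly covered neighborhoods of the $\psi_2^{(i)}$, and show separately that the remaining sheets contribute nothing over a sufficiently small neighborhood, which is where the closedness discussion above enters. Minor further omissions: the finiteness of $p_3^{-1}(\psi_3)$ should be deduced from the hypotheses rather than asserted, and the disjointness and openness of the images of the $\tilde{\sigma}_i$, needed for the covering property, are not addressed.
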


\begin{fact} \label{propCombFunc}
Let $\mathcal{L}_1$, $\mathcal{L}_2$ be two linkages with $\mathrm{card} (Q_1) = \mathrm{card} (P_2)$.

Assume that $\mathcal{L}_1$ is a functional linkage for $f_1: \mathrm{Conf}_{\mathcal{M}}^{P_1}(\mathcal{L}_1) \to \mathcal{M}^{Q_1}$ and that $\mathcal{L}_2$ is a functional linkage for $f_2: \mathrm{Conf}_{\mathcal{M}}^{P_2}(\mathcal{L}_2) \to \mathcal{M}^{Q_2}$. Let $W_1 = Q_1$, $\beta: W_1 \to P_2$ a bijection, and $\mathcal{L}_3 = \mathcal{L}_1 \cup_\beta \mathcal{L}_2$. The bijection $\beta$ induces a bijection $\hat{\beta}$ between $\mathcal{M}^{Q_1}$ and $\mathcal{M}^{P_2}$.

Then $\mathcal{L}_3$ is functional for $f_2 \circ \hat\beta \circ f_1|_{\mathrm{Conf}_{\mathcal{M}}^{P_3}(\mathcal{L}_3)}$.
\end{fact}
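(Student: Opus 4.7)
The plan is to unravel the definitions and chase the realization of a configuration through the combined linkage. Start from an arbitrary $\phi_3 \in \mathrm{Conf}_{\mathcal{M}}(\mathcal{L}_3)$ and invoke Fact~\ref{propCombConfSpaces} to extract a compatible pair $(\phi_1, \phi_2) \in \mathrm{Conf}_{\mathcal{M}}(\mathcal{L}_1) \times \mathrm{Conf}_{\mathcal{M}}(\mathcal{L}_2)$. The three compatibility relations in Fact~\ref{propCombConfSpaces}, namely $\phi_1|_{V_1 \setminus W_1} = \phi_3|_{V_1 \setminus W_1}$, $\phi_1|_{W_1} = \phi_3|_{\beta(W_1)} \circ \beta$, and $\phi_2 = \phi_3|_{V_2}$, are exactly what is needed to translate data between the three linkages.

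Next I would compute both sides of the desired identity. Since $W_1 = Q_1$ and $\beta$ is a bijection onto $P_2$, we have $Q_3 = Q_2$, so $q_3(\phi_3) = \phi_3|_{Q_2} = \phi_2|_{Q_2}$. Applying the functionality hypothesis for $\mathcal{L}_2$ yields $\phi_2|_{Q_2} = f_2(\phi_2|_{P_2})$. The compatibility relations then give $\phi_2|_{P_2} = \phi_3|_{\beta(W_1)} = \phi_1|_{W_1} \circ \beta^{-1} = \hat\beta(\phi_1|_{Q_1})$, and a second application of functionality (now for $\mathcal{L}_1$) transforms this into $\hat\beta(f_1(\phi_1|_{P_1}))$. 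It remains to identify $\phi_1|_{P_1}$ with $p_3(\phi_3)$ modulo the identification $\beta$: this follows from the definition of $P_3$, since $P_3$ records exactly the data of $\phi_1|_{P_1 \setminus W_1}$ (directly) together with $\phi_1|_{P_1 \cap W_1}$ (via $\beta$).

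Assembling these identifications gives $q_3(\phi_3) = f_2 \circ \hat\beta \circ f_1(p_3(\phi_3))$, which is the required functionality. There is no serious obstacle: the only thing to check carefully is the bookkeeping between the three vertex sets, in particular that $\beta(W_1) = P_2$ forces $P_2 \setminus \beta(W_1) = \emptyset$ and $Q_1 \setminus W_1 = \emptyset$, so that $P_3$ and $Q_3$ really do match up with the domain and codomain of $f_2 \circ \hat\beta \circ f_1$. Once this combinatorial check is out of the way, the argument is just two invocations of functionality separated by the identification induced by $\beta$.
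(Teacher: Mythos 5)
Your argument is correct: the paper itself leaves this fact unproved (``the proofs in this section are straightforward and left to the reader''), and your definition-chase via Fact~\ref{propCombConfSpaces} --- computing $q_3(\phi_3)=\phi_2|_{Q_2}=f_2(\phi_2|_{P_2})$, transporting $\phi_2|_{P_2}=\hat\beta(\phi_1|_{Q_1})=\hat\beta(f_1(\phi_1|_{P_1}))$ through $\beta$, and matching $\phi_1|_{P_1}$ with $p_3(\phi_3)$ using $P_3=(P_1\setminus W_1)\cup\beta(P_1\cap W_1)$ --- is exactly the routine verification the author intends. You also correctly isolate the only delicate point, namely the implicit identification of $\mathcal{M}^{P_3}$ with $\mathcal{M}^{P_1}$ when $P_1\cap Q_1\neq\emptyset$, so nothing is missing.
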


\section{Appendix: Linkages on any Riemannian manifold}

The aim of this section is to prove Theorem~\ref{thmQuelconque}.

Consider a linkage $\mathcal L = (V, E, F, l, \phi_0, P, Q)$ in the Euclidean space $\mathbb E^n$ as in the statement of the theorem: we may assume without loss of generality that $\mathcal L$ is a connected graph, that the sum of the lengths of the edges is smaller than $1$, and that one of the vertices is fixed to $0$, so that the configuration space of $\mathcal L$ is a subset of $B^V$, where $B$ is the unit ball of $\mathbb E^n$. We introduce $C$ the set of all mappings $\phi : V \to B$ such that $\phi|_F = \phi_0$ (namely, those which map the fixed points to their assigned locations), and define the mapping
\[ \begin{aligned} \Phi : C & \to (\mathbb{R}_{\geq 0})^E
\\ \phi & \longmapsto \left( (v, v') \mapsto \lVert \phi(v') - \phi(v') \rVert^2 \right). \end{aligned} \]

Then the configuration space of $\mathcal L$ in $\mathbb E^n$ is $\Phi^{-1}(l^2)$. Making a small perturbation of $l$, we may assume by the Lemma of Sard that $l^2$ is a regular value of $\Phi$. By assumption, this perturbation does not change $\mathrm{Conf}_{\mathbb E^n}(\mathcal L)$, up to diffeomorphism.

Let $U$ be an open neighborhood of $0$ in $\mathbb E^n$, equipped with a metric $g$, such that $(U,g)$ is isometric to an open subset of the Riemannian manifold $\mathcal M$, and denote by $\delta$ the associated distance on $U$. Applying a linear transformation to $U$, we may assume that $g_0$ (the metric $g$ at $0$) coincides with the canonical Euclidean scalar product on $\mathbb R^n$.

For a small enough $r > 0$, the mapping
\[ \begin{aligned}
\Psi : C \times \left(( -r, r ) \setminus \{0\}\right) & \to \mathbb{R}^E \times \mathbb{R}
\\ \begin{pmatrix}\phi \\ \lambda \end{pmatrix} & \longmapsto \begin{pmatrix} \psi :  (v, v') \mapsto \frac{1}{\lambda^2} \delta^2 (\lambda \phi(v), \lambda \phi(v')) \\ \lambda \end{pmatrix}
\end{aligned} \]
is well-defined, smooth, and may be extended smoothly to $C \times (-r, r)$ (apply Taylor's formula).

Then for all small enough $\lambda \in \mathbb R$,
\[ \Psi^{-1} \begin{pmatrix}l^2 \\ \lambda \end{pmatrix} = \frac{1}{\lambda} \mathrm{Conf}_{(U, g)}(\mathcal L_\lambda) \times \{\lambda\}, \]
where $\mathcal{L}_\lambda = (V, E, F, \lambda l, \lambda \phi_0, P, Q)$. Notice that $\mathrm{Conf}_{(U, g)}(\mathcal L_\lambda)$ is diffeomorphic to the configuration space of some linkage in $\mathcal M$, since $(U,g)$ is isometric to an open set of $\mathcal M$.

The key to the proof is the following fact:

\begin{fact} \label{factQuelconque}
For all $\phi \in C$, $\Psi \begin{pmatrix} C \\ 0 \end{pmatrix} = \begin{pmatrix} \Phi(C) \\ 0 \end{pmatrix}$.
\end{fact}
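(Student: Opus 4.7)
The plan is to identify the smooth extension of $\Psi$ at $\lambda = 0$ by a Taylor expansion of the squared distance $\delta^2$ around the origin, and to check that the resulting value at $\lambda = 0$ is exactly $(\Phi(\phi), 0)$ for every $\phi \in C$.

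First, I would note that, choosing $U$ to be a convex normal neighborhood of $0$, the function $\delta^2 : U \times U \to \mathbb{R}$ is smooth, and then compute its $2$-jet at $(0, 0)$. Since $\delta^2$ is non-negative and vanishes at $(0, 0)$, both first differentials vanish there; the second-order behavior is pinned down by the three boundary identities $\delta^2(x, x) = 0$, $\delta^2(0, y) = \|y\|^2 + O(\|y\|^3)$, and $\delta^2(x, 0) = \|x\|^2 + O(\|x\|^3)$, where the Euclidean norm is the one coming from $g_0$, which by the reduction at the start of the appendix is arranged to be the canonical Euclidean inner product on $\mathbb{R}^n$. Differentiating $\delta^2(x, x) = 0$ twice and using the values of the pure second derivatives then forces the mixed second derivative to be $-2I$, so the Hessian of $\delta^2$ at $(0, 0)$ is the bilinear form $(a, b) \mapsto \|a\|^2 - 2\langle a, b\rangle + \|b\|^2 = \|b - a\|^2$. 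Hence, by Taylor's formula with integral remainder,
$$\delta^2(x, y) = \|y - x\|^2 + R(x, y),$$
where $R$ is smooth and vanishes to order $3$ at $(0, 0)$.

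Second, substituting $x = \lambda \phi(v)$, $y = \lambda \phi(v')$ and applying Hadamard's lemma (equivalently, the integral form of Taylor's remainder), I would write $R(\lambda \phi(v), \lambda \phi(v')) = \lambda^3 \rho_{v, v'}(\phi, \lambda)$ for some function $\rho_{v, v'}$ that is smooth in $(\phi, \lambda)$ on $C \times (-r, r)$, provided $r$ is small enough that $\lambda \phi(v), \lambda \phi(v')$ stay in the domain of smoothness of $\delta^2$. Dividing by $\lambda^2$ yields
$$\frac{1}{\lambda^2}\, \delta^2(\lambda \phi(v), \lambda \phi(v')) = \|\phi(v') - \phi(v)\|^2 + \lambda\, \rho_{v, v'}(\phi, \lambda),$$
which is smooth in $(\phi, \lambda)$ up to and including $\lambda = 0$, and therefore is the smooth extension of the first component of $\Psi$ promised in the statement.

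Finally, evaluating at $\lambda = 0$ recovers the function $(v, v') \mapsto \|\phi(v') - \phi(v)\|^2 = \Phi(\phi)(v, v')$ in the first component, while the second component is $0$. Thus $\Psi(\phi, 0) = (\Phi(\phi), 0)$ for every $\phi \in C$, which gives the claimed equality. The main obstacle is the identification of the Hessian of $\delta^2$ at the origin with the Euclidean squared-distance form; this uses the standard fact that the Riemannian distance equals the $g_0$-length of the displacement up to third-order corrections, combined with the appendix's reduction ensuring $g_0$ is the Euclidean inner product. Everything else reduces to a routine Hadamard-type factorization.
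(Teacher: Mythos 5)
Your proof is correct, but it takes a genuinely different route from the paper's. The paper works directly with the variational definition of $\delta$: it rewrites $\frac{1}{\lambda^2}\delta^2(\lambda\phi(v),\lambda\phi(v'))$ as an infimum of squared lengths over rescaled paths in $\frac{1}{\lambda}U$ with the metric $g_{\lambda\gamma(t)}$, and lets $\lambda \to 0$ so that the infimum converges to the $g_0$-length infimum over all paths in $\mathbb{R}^n$, i.e.\ the Euclidean squared distance. You instead exploit the smoothness of $\delta^2$ near the diagonal and compute its $2$-jet at the origin, obtaining $\delta^2(x,y)=\lVert y-x\rVert^2+R(x,y)$ with $R$ vanishing to order $3$, and then conclude by a Hadamard-type factorization. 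Your approach buys something the paper's argument does not: it simultaneously \emph{proves} that $\Psi$ extends smoothly to $\lambda=0$ (the paper only asserts this earlier, with the hint ``apply Taylor's formula''), and it gives an explicit first-order expansion $\Psi_1(\phi,\lambda)=\Phi(\phi)+\lambda\rho(\phi,\lambda)$ which makes the subsequent regular-value argument transparent; the price is that you need the (standard but nontrivial) smoothness of $\delta^2$ on a totally convex neighborhood and the identification of its Hessian. Conversely, the paper's path-infimum argument is softer and avoids any jet computation, though it silently interchanges a limit with an infimum. Two small points to tidy in your write-up: your displayed bilinear form $\lVert a\rVert^2-2\langle a,b\rangle+\lVert b\rVert^2$ is the second-order Taylor term (half the Hessian quadratic form $2\lVert b-a\rVert^2$), so the bookkeeping of the factor $2$ should be made consistent; and to get $B=-2I$ from the diagonal identity you should note explicitly that the symmetry $\delta^2(x,y)=\delta^2(y,x)$ forces the mixed block to be a symmetric matrix, since the diagonal identity alone only determines its symmetric part.
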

\begin{proof} In this proof, for any open set $W \in \mathbb R^n$, we will write $C^1([0,1], W)$ the set of $C^1$ paths which take their values in $W$.

Let $\phi \in C$. For any small enough $\lambda \in \mathbb R$, we have:

\[ \begin{aligned} \Psi_1 \begin{pmatrix}\phi\\ \lambda\end{pmatrix} \cdot (v, v') & = \frac{1}{\lambda^2}  \inf_{\substack{\gamma \in C^1([0,1], U) \\ \gamma(0) = \lambda\phi(v), ~ \gamma(1) = \lambda\phi(v')}} \left(\int_0^1 \sqrt{g_{\gamma(t)}(\gamma'(t), \gamma'(t))}dt \right)^2
\\
& = \inf_{\substack{\gamma \in C^1([0,1], \frac{1}{\lambda} U)\\ \gamma(0) = \phi(v), ~ \gamma(1) = \phi(v')}} \left(\int_0^1 \sqrt{g_{\lambda \gamma(t)}(\gamma'(t), \gamma'(t))}dt\right)^2. \end{aligned} \]

Any $C^1$ path from $[0,1]$ to $\mathbb R^n$ takes it values in $\frac{1}{\lambda} U$ for some $\lambda > 0$. Thus, taking the limit as $\lambda \to 0$, we obtain:
\[ \Psi_1 \begin{pmatrix}\phi\\ 0\end{pmatrix} \cdot (v, v') = \inf_{\substack{\gamma \in C^1([0,1], \mathbb{R}^n) \\ \gamma(0) = \phi(v), ~ \gamma(1) = \phi(v')}} \left(\int_0^1 \sqrt{g_0 (\gamma'(t), \gamma'(t))}dt \right)^2 = \lVert \phi(v')-\phi(v) \rVert^2, \]
since $g_0$ is the canonical Euclidean scalar product.
\end{proof}

Fact~\ref{factQuelconque} shows that $\begin{pmatrix}l^2\\0\end{pmatrix}$ is a regular value of $\Psi$, and that $\Psi^{-1} \begin{pmatrix}l^2 \\ 0\end{pmatrix} = \mathrm{Conf}_{\mathbb E^n}(\mathcal L)$.

Hence, for any small enough $\lambda \in \mathbb R$, $\mathrm{Conf}_{(U, g)}(\mathcal L_\lambda)$ is diffeomorphic to $\mathrm{Conf}_{\mathbb E^n}(\mathcal L)$, which ends the proof.

\chapter{Linkages in the Minkowski plane} \label{chapMink}

The aim of this chapter is to prove Theorem~\ref{kempe-mink}.

\section{Generalities on the Minkowski plane}

\subsection{Notation} \label{notations}
The Minkowski plane $\mathbb{M}$ is $\mathbb{R}^2$ equipped with the bilinear form $\varphi\left(\begin{pmatrix}x\\t\end{pmatrix}, \begin{pmatrix}x'\\t'\end{pmatrix}\right) = xx' - tt'$. The pseudo-norm $\lVert \cdot \rVert$ is defined by $\lVert \alpha \rVert^2 = \varphi(\alpha, \alpha)$, and $\lVert \alpha \rVert \in \mathbb{R}_{\geq 0} \cup i\mathbb{R}_{\geq 0}$. The ``distance'' between $\alpha_1$ and $\alpha_2$ is defined by $\delta(\alpha_1, \alpha_2) = \lVert \alpha_1 - \alpha_2 \rVert$.

For $\alpha \in \mathbb{M}$, we write $x_\alpha$ and $t_\alpha$ the usual coordinates in $\mathbb{R}^2$, so that $\lVert \alpha \rVert^2 = x_\alpha^2 - t_\alpha^2$.

Sometimes, it will be more convenient to use \emph{lightlike coordinates}, defined by $y_\alpha = x_\alpha + t_\alpha$ and $z_\alpha = x_\alpha - t_\alpha$, so that $\lVert \alpha \rVert^2 = y_\alpha z_\alpha$.

We write $\mathcal{I} = \left\{ \begin{pmatrix}x\\t\end{pmatrix} \in \mathbb{M} ~ \middle\arrowvert ~ t = 0\right\}$.

\subsection{Intersection of two hyperbolae} \label{sectInterHyp}
In the Minkowski plane, hyperbolae play a central role (instead of circles in the Euclidean plane): for any $\alpha \in \mathbb{M}$ and $r^2 \in \mathbb{R}$, the hyperbola $\mathcal{H}(\alpha, r)$ is the set of all $\gamma \in \mathbb{M}$ such that $\delta(\alpha, \gamma)^2 = r^2$.

Let $\alpha_0, \alpha_1 \in \mathbb{M}$ and $r_0^2, r_1^2 \in \mathbb{R}$, and write $d = \lVert \alpha_0 - \alpha_1 \rVert$. Our aim in this section is to determine the cardinality of $I = \mathcal{H}(\alpha_0, r_0) \cap \mathcal{H}(\alpha_1, r_1)$.

\begin{prop} \label{nbIntersection}
If $\alpha_0 \neq \alpha_1$ and $r_0^2 r_1^2 \neq 0$, we have $\mathrm{card} (I) \leq 2$.
\end{prop}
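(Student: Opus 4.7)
The plan is to reduce the problem to a line-hyperbola intersection, and then bound the latter by a direct polynomial count, with a little extra care in the lightlike case.

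First I would expand the defining equations of the two hyperbolae,
\[ (x - x_{\alpha_0})^2 - (t - t_{\alpha_0})^2 = r_0^2, \qquad (x - x_{\alpha_1})^2 - (t - t_{\alpha_1})^2 = r_1^2, \]
and subtract them. The quadratic terms cancel, leaving
\[ -2(x_{\alpha_0}-x_{\alpha_1}) x + 2(t_{\alpha_0}-t_{\alpha_1}) t = r_0^2 - r_1^2 + (t_{\alpha_0}^2-t_{\alpha_1}^2) - (x_{\alpha_0}^2-x_{\alpha_1}^2). \]
Because $\alpha_0 \neq \alpha_1$, the left-hand side is a non-zero linear form in $(x,t)$, so this equation cuts out a genuine affine line $L \subseteq \mathbb M$. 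Any point of $I$ satisfies both hyperbola equations, hence their difference, so $I \subseteq L \cap \mathcal{H}(\alpha_0, r_0)$.

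Next I would bound $\mathrm{card}(L \cap \mathcal{H}(\alpha_0, r_0))$ by parameterizing $L$ and substituting. If $L$ is not horizontal, solving for $t$ in terms of $x$ (or vice versa) turns the hyperbola equation into a polynomial in one variable whose $x^2$-coefficient is proportional to $b^2 - a^2$, where $(a,b)$ is a normal to $L$. When $L$ is not lightlike, $b^2 \neq a^2$, the polynomial has degree exactly $2$, and there are at most two solutions; the horizontal case is the same with the roles of $x$ and $t$ swapped. The only remaining case is when $L$ is lightlike: writing $L$ as $t = x + c$ (the case $t = -x + c$ is symmetric), the hyperbola equation factors as
\[ (t_{\alpha_0} - x_{\alpha_0} - c)\bigl(2x - x_{\alpha_0} + c - t_{\alpha_0}\bigr) = r_0^2. \]
Here $r_0^2 \neq 0$ by hypothesis, so the left-hand factor cannot vanish, and the equation determines $x$ uniquely. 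Thus in every case $L \cap \mathcal{H}(\alpha_0, r_0)$ has at most two points, and the proposition follows.

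The only subtlety I expect is precisely this lightlike case: a generic line meets a conic in at most two points by degree counting, but a lightlike line is parallel to an asymptote of the hyperbola and the naive substitution drops a degree. The hypothesis $r_0^2 \neq 0$ is exactly what rules out the degenerate situation where $L$ would be entirely contained in $\mathcal{H}(\alpha_0,r_0)$ (which, for $r_0 = 0$, degenerates into a union of two lightlike lines through $\alpha_0$). Hence the assumption $r_0^2 r_1^2 \neq 0$ is used in this last step, while $\alpha_0 \neq \alpha_1$ is used to ensure that $L$ is actually a line.
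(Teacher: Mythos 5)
Your proof is correct, but it takes a different route from the paper. The paper works in lightlike coordinates $y = x+t$, $z = x-t$, normalizes $\alpha_1 = 0$ with $y_{\alpha_0} \neq 0$ (possible since $\alpha_0 \neq \alpha_1$, up to swapping $y$ and $z$), and substitutes $yz = r_1^2$ into the second hyperbola equation to obtain a single genuine quadratic in $z$ with leading coefficient $y_{\alpha_0} \neq 0$, the value of $y$ being recovered as $r_1^2/z$ (this is where $r_1^2 \neq 0$ enters); there is no case analysis at all. You instead form the ``radical line'' $L$ by subtracting the two equations and then count points of $L \cap \mathcal{H}(\alpha_0, r_0)$, which forces you to treat the lightlike case of $L$ separately -- correctly so, and your factorization $(t_{\alpha_0} - x_{\alpha_0} - c)(2x - x_{\alpha_0} + c - t_{\alpha_0}) = r_0^2$ does the job (strictly speaking, if the constant factor vanishes there are no solutions rather than ``the factor cannot vanish,'' but either way you get at most one point). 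What your approach buys is a more geometric statement with wider scope: any affine line not contained in a hyperbola of nonzero $r^2$ meets it in at most two points, and your argument only uses $r_0^2 \neq 0$, so it proves the proposition under the weaker hypothesis that at least one of the two radii-squared is nonzero, while also making explicit where $\alpha_0 \neq \alpha_1$ and the nondegeneracy hypothesis are used. The paper's null-coordinate elimination is shorter and uniform. One cosmetic slip: when you solve $ax + bt = e$ for $t$ you need $L$ not \emph{vertical} (i.e.\ $b \neq 0$), not ``not horizontal''; your ``or vice versa'' and the swapped case cover this, but the labels should be fixed.
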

\begin{proof}
We write $y_0 = y_{\alpha_0}$ and $z_0 = z_{\alpha_0}$. We may assume $\alpha_1 = 0$ and $y_0 \neq 0$. Then, $I$ is the set of the solutions of the system with unknown $(y, z)$:
\[
\left\{\begin{aligned}yz = r_1^2 \\(y-y_0)(z - z_0) = r_0^2\end{aligned}\right.
\]
which is equivalent to
\[
\left\{\begin{aligned}yz = r_1^2 \\y_0 z^2 - (y_0z_0 + r_1^2 - r_0^2)z + r_1^2z_0 = 0\end{aligned}\right.
\]

Thus, $z$ is one of the roots of a polynomial of degree $2$ and $y$ is fully determined by $z$, so there are at most two solutions to the system.
\end{proof}

\begin{prop} \label{intersectionOfHyperbolae}
\begin{enumerate}
\item If $r_0^2 r_1^2 < 0$ and $d^2 \neq 0$, then $\mathrm{card} (I) = 2$. Moreover, if $d'$ is the distance between the two points of $I$, then $d^2 {d'}^2 < 0$.
\item If $r_0^2 r_1^2 < 0$ and $d^2 = 0$, then $\mathrm{card} (I) = 1$.
\item If $r_0^2 r_1^2 > 0$ and $r_0^2 d^2 < 0$, then $\mathrm{card} (I) = 2$.
\end{enumerate}
\end{prop}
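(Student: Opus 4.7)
The plan is to continue with the setup from Proposition~\ref{nbIntersection}: put $\alpha_1 = 0$ and, by the $y \leftrightarrow z$ symmetry, assume $y_0 \neq 0$. The intersection points are then in bijection with the roots of the quadratic
\[ y_0 z^2 - (d^2 + r_1^2 - r_0^2) z + r_1^2 z_0 = 0, \]
via $y = r_1^2/z$ (legitimate because $r_1^2 \neq 0$ excludes $z = 0$). Its discriminant is
\[ \Delta = (d^2 + r_1^2 - r_0^2)^2 - 4 d^2 r_1^2 = (d^2)^2 + (r_0^2)^2 + (r_1^2)^2 - 2 d^2 r_0^2 - 2 d^2 r_1^2 - 2 r_0^2 r_1^2, \]
which is \emph{symmetric} in $(d^2, r_0^2, r_1^2)$. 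The key device will be to rewrite $\Delta$ in whichever of the three equivalent forms $(d^2 \pm r_0^2 \mp r_1^2)^2 - 4 d^2 r_i^2$ makes the sign analysis immediate.

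For Cases~(1) and (3) the goal is $\Delta > 0$. In Case~(3), say $r_0^2, r_1^2 > 0$ and $d^2 < 0$ (the opposite sign pattern is analogous): the original form gives $-4 d^2 r_1^2 > 0$, hence $\Delta \geq -4 d^2 r_1^2 > 0$. In Case~(1), WLOG $r_0^2 > 0$ and $r_1^2 < 0$, and I split on the sign of $d^2$: if $d^2 > 0$ then $-4 d^2 r_1^2 > 0$, and if $d^2 < 0$ the symmetric form $\Delta = (d^2 + r_0^2 - r_1^2)^2 - 4 d^2 r_0^2$ gives $-4 d^2 r_0^2 > 0$. In every subcase $\Delta > 0$, whence $\mathrm{card}(I) = 2$.

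For the second part of Case~(1) I compute the distance $d'$ between the two intersections $(y,z)$ and $(y',z')$. Both lie on $yz = r_1^2$, so $y - y' = -r_1^2 (z - z')/(zz')$; in lightlike coordinates,
\[ {d'}^2 = (y - y')(z - z') = -\frac{r_1^2 (z - z')^2}{z z'}. \]
Vieta's formulas give $(z - z')^2 = \Delta/y_0^2$ and $z z' = r_1^2 z_0 / y_0$, so ${d'}^2 = -\Delta/(y_0 z_0) = -\Delta/d^2$ and therefore $d^2 {d'}^2 = -\Delta < 0$ by the previous step. The hard part will actually be spotting the minus sign that produces $d^2{d'}^2 = -\Delta$ rather than $+\Delta$: without it the statement would appear to contradict $\Delta > 0$.

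For Case~(2), $d^2 = 0$ together with $y_0 \neq 0$ forces $z_0 = 0$, and the quadratic degenerates to $z\bigl(y_0 z - (r_1^2 - r_0^2)\bigr) = 0$. The root $z = 0$ is forbidden by $yz = r_1^2 \neq 0$, while the root $z = (r_1^2 - r_0^2)/y_0$ is well-defined since $r_0^2 \neq r_1^2$ (they have opposite signs). This yields exactly one intersection point, finishing the proof.
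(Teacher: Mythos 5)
Your proof is correct, but it takes a genuinely different route from the paper: the paper's ``proof'' of Proposition~\ref{intersectionOfHyperbolae} consists of exhibiting the three figures, whereas you push the algebraic reduction of Proposition~\ref{nbIntersection} to completion, analysing the discriminant $\Delta$ of the quadratic in $z$ and exploiting its symmetry in $(d^2, r_0^2, r_1^2)$; in particular you get the second claim of case (1) from the exact identity ${d'}^2 = -\Delta/d^2$, which the figures only suggest. Your approach buys a rigorous, picture-free argument (plus a quantitative formula for $d'$); the paper's buys brevity and geometric intuition. Two small points. First, in cases (1) and (3) you should also observe that $z = 0$ is not a root of the quadratic: its constant term is $r_1^2 z_0$, and $z_0 \neq 0$ because $d^2 = y_0 z_0 \neq 0$ under those hypotheses — without this, the claimed bijection between intersection points and roots (which genuinely fails in case (2), where you rightly discard the spurious root $z = 0$) is not justified; note also that, as in the paper's standing setup, you tacitly use $\alpha_0 \neq \alpha_1$ to normalize $y_0 \neq 0$, and case (2) is false without it. Second, case (1) can be handled in one stroke via the third symmetric form $\Delta = (d^2 - r_0^2 - r_1^2)^2 - 4 r_0^2 r_1^2 \geq -4 r_0^2 r_1^2 > 0$, avoiding the split on the sign of $d^2$.
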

\begin{proof} Examine the following figures.
\end{proof}

\begin{center}
\begin{tabular}{ccc}

\begin{tikzpicture} 
\draw [domain=-10:-0.1,samples=100,scale=0.25, rotate=-45, color={rgb:red,1;black,1}] plot (\x, {1/\x});
\draw [domain=0.1:10,samples=100,scale=0.25, rotate=-45, color={rgb:red,1;black,1}] plot (\x, {1/\x});

\draw [domain=-10+3:-0.1+3,samples=100,scale=0.25, rotate=45, color={rgb:blue,1;black,1}] plot (\x, {1/(\x-3)+5});
\draw [domain=0.1+3:10+3,samples=100,scale=0.25, rotate=45, color={rgb:blue,1;black,1}] plot (\x, {1/(\x-3)+5});
\end{tikzpicture} &
\begin{tikzpicture} 
\draw [domain=-10:-0.1,samples=100,scale=0.25, rotate=-45, color={rgb:red,1;black,1}] plot (\x, {1/\x});
\draw [domain=0.1:10,samples=100,scale=0.25, rotate=-45, color={rgb:red,1;black,1}] plot (\x, {1/\x});

\draw [domain=-10+3:-0.1+3,samples=100,scale=0.25, rotate=45, color={rgb:blue,1;black,1}] plot (\x, {1/(\x-3)});
\draw [domain=0.1+3:10+3,samples=100,scale=0.25, rotate=45, color={rgb:blue,1;black,1}] plot (\x, {1/(\x-3)});
\end{tikzpicture} &
\begin{tikzpicture} 
\draw [domain=-10:-0.1,samples=100,scale=0.25, rotate=-45, color={rgb:red,1;black,1}] plot (\x, {1/\x});
\draw [domain=0.1:10,samples=100,scale=0.25, rotate=-45, color={rgb:red,1;black,1}] plot (\x, {1/\x});

\draw [domain=-10+3:-0.1+3,samples=100,scale=0.25, rotate=-45, color={rgb:blue,1;black,1}] plot (\x, {1/(\x-3)-2});
\draw [domain=0.1+3:10+3,samples=100,scale=0.25, rotate=-45, color={rgb:blue,1;black,1}] plot (\x, {1/(\x-3)-2});
\end{tikzpicture}
\\ $r_0^2 r_1^2 < 0$ and $d^2 \neq 0$. & $r_0^2 r_1^2 < 0$ and $d^2 = 0$. & $r_0^2 r_1^2 > 0$ and $r_0^2 d^2 < 0$.
\end{tabular}
\end{center}

\subsection{The case of equality in the triangle inequality}
In the Minkowski plane, the triangle inequality is not always valid, but the equality case is the same as in the Euclidean plane.
\begin{prop}
Let $\alpha, \beta \in \mathbb{M}$. If $\lVert \alpha \rVert + \lVert \beta \rVert = \lVert \alpha + \beta \rVert$, then $\alpha$ and $\beta$ are colinear.
\end{prop}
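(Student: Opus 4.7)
The plan is to reduce the equality to a Cauchy--Schwarz-type identity by squaring twice. First I would observe that since each of the three norms lies in $\mathbb{R}_{\geq 0} \cup i\mathbb{R}_{\geq 0}$, the equality $\lVert \alpha \rVert + \lVert \beta \rVert = \lVert \alpha + \beta \rVert$ forces $\lVert \alpha \rVert$ and $\lVert \beta \rVert$ to be of compatible ``type'' (both real, both pure imaginary, or one of them zero); in every case their product is a well-defined real number. This is the only place where I expect some care to be required: a mixed real/imaginary situation would push $\lVert \alpha \rVert + \lVert \beta \rVert$ outside of $\mathbb{R}_{\geq 0} \cup i\mathbb{R}_{\geq 0}$, contradicting the hypothesis, so the case analysis is painless.

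Next I would square both sides. Using bilinearity of $\varphi$,
\[ \lVert \alpha + \beta \rVert^2 = \varphi(\alpha + \beta, \alpha + \beta) = \lVert \alpha \rVert^2 + 2\varphi(\alpha, \beta) + \lVert \beta \rVert^2, \]
while on the other side $(\lVert \alpha \rVert + \lVert \beta \rVert)^2 = \lVert \alpha \rVert^2 + 2 \lVert \alpha \rVert \lVert \beta \rVert + \lVert \beta \rVert^2$. Cancelling common terms yields $\varphi(\alpha, \beta) = \lVert \alpha \rVert \lVert \beta \rVert$. Squaring once more to clear the mixed product, one obtains
\[ \varphi(\alpha, \beta)^2 = \lVert \alpha \rVert^2 \lVert \beta \rVert^2 = \varphi(\alpha, \alpha) \varphi(\beta, \beta). \]

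Finally, a direct expansion in the coordinates $(x, t)$ gives the Lorentzian Cauchy--Schwarz identity
\[ \varphi(\alpha, \beta)^2 - \varphi(\alpha, \alpha) \varphi(\beta, \beta) = (x_\alpha t_\beta - x_\beta t_\alpha)^2, \]
so the left-hand side being zero forces $x_\alpha t_\beta - x_\beta t_\alpha = 0$, i.e.\ $\det(\alpha, \beta) = 0$, which is exactly the colinearity of $\alpha$ and $\beta$. The whole argument is essentially algebraic; the main (minor) obstacle is justifying the first squaring step across the various degenerate regimes, which the preliminary type analysis takes care of.
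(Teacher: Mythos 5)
Your proof is correct. The first half is identical to the paper's: squaring the hypothesis and expanding $\lVert \alpha + \beta \rVert^2$ by bilinearity gives $\varphi(\alpha, \beta) = \lVert \alpha \rVert \lVert \beta \rVert$. From there the two arguments diverge. The paper observes that this makes the discriminant of $\lambda \mapsto \lVert \beta \rVert^2 \lambda^2 + 2\varphi(\alpha,\beta)\lambda + \lVert \alpha \rVert^2 = \lVert \alpha + \lambda\beta \rVert^2$ vanish, so this quadratic never changes sign, and then invokes the geometric fact that for linearly independent $\alpha, \beta$ the affine line $\lambda \mapsto \alpha + \lambda\beta$ must cross the light cone transversally (a step the paper leaves implicit). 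You instead square once more and use the Lagrange-type identity $\varphi(\alpha,\beta)^2 - \varphi(\alpha,\alpha)\varphi(\beta,\beta) = (x_\alpha t_\beta - x_\beta t_\alpha)^2$, which is exactly (one quarter of) that same discriminant written as a perfect square; its vanishing immediately gives $\det(\alpha,\beta) = 0$. Your route is more computational but entirely self-contained, and it makes explicit the ``reverse Cauchy--Schwarz'' fact that the discriminant is always $\geq 0$ in the Minkowski plane, which the paper's sign-change argument only uses implicitly. One small remark: your preliminary ``type'' analysis of $\lVert \alpha \rVert$ and $\lVert \beta \rVert$ is harmless but unnecessary --- all the identities you use ($(A+B)^2 = A^2 + 2AB + B^2$ and the squaring of $\varphi(\alpha,\beta) = \lVert\alpha\rVert\lVert\beta\rVert$) are valid over $\mathbb{C}$, and no square root is ever extracted, so the argument goes through without any case distinction.
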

\begin{proof}
We have
\[ \left(\lVert \alpha \rVert + \lVert \beta \rVert \right)^2 = \lVert \alpha + \beta \rVert^2 \]
\[ \lVert \alpha \rVert^2 + \lVert \beta \rVert^2 + 2 \lVert \alpha \rVert \lVert \beta \rVert = \lVert \alpha \rVert^2 + \lVert \beta \rVert^2 + 2 \varphi(\alpha, \beta) \]
\[\varphi(\alpha, \beta) = \lVert \alpha \rVert \lVert \beta \rVert.\]

Therefore, the discriminant of the polynomial function
\[ \lambda \longmapsto \lVert \beta \rVert^2 \lambda^2 + 2 \varphi(\alpha, \beta) \lambda + \lVert \alpha \rVert^2 \]
is zero. Thus, $\lVert \alpha + \lambda \beta \rVert^2$ is either nonnegative for all $\lambda$ or nonpositive for all $\lambda$. This means that $\alpha$ and $\beta$ are not linearly independent.
\end{proof}

\subsection{The dual linkage} \label{sectDualLinkage}
Let $\mathcal{L}_1$ be a linkage in the Minkowski plane. We define the reflection
\[ \begin{aligned} s: \mathbb{C} & \to \mathbb{C}
\\ a + ib & \longmapsto b + ia \end{aligned} \]
and construct $\mathcal{L}_2$, the \emph{dual linkage} of $\mathcal{L}_1$, by
\begin{enumerate}
\item $V_2 = V_1$
\item $E_2 = E_1$
\item $F_2 = F_1$
\item $P_2 = P_1$
\item $Q_2 = Q_1$
\item $l_2 = s \circ l_1$
\item $\phi_{02} = s \circ \phi_{01}$ (with $\mathbb{R}^2$ identified to $\mathbb{C}$ with the coordinates $(x, t)$).
\end{enumerate}

For all $W \subseteq V_1$, this linkage satisfies
\[ \mathrm{Conf}_{\mathbb{M}}^W(\mathcal{L}_2) = \left\{ s \circ \phi ~ \middle\arrowvert ~ \phi \in \mathrm{Conf}_{\mathbb{M}}^W(\mathcal{L}_1) \right\}, \]
\[ \mathrm{Reg}_{\mathbb{M}}^{P_2}(\mathcal{L}_2) = \left\{ s \circ \phi ~ \middle\arrowvert ~ \phi \in \mathrm{Reg}_{\mathbb{M}}^W(\mathcal{L}_1) \right\}. \]

\section{Elementary linkages for geometric operations}

\subsection{The robotic arm linkage} \label{articArmSect}

\begin{center}
\begin{tikzpicture}[scale=1.5]
\node[input, label=left:$a$] (a) at (-1,0) {};
\node[input, label=right:$b$] (b) at (1,0) {};
\node[vertex, label=right:$c$] (c) at (0,2) {};
\draw (c) to[bend right] node[edge] {$l_2$} (a);
\draw (c) to[bend left] node[edge] {$l_1$} (b);
\end{tikzpicture}
\end{center}

We let $P = \{a, b\}$ and $F = \emptyset$. We assume $l_1 \neq 0$ and $l_2 \neq 0$.

We translate Proposition~\ref{intersectionOfHyperbolae} in terms of linkages.
\begin{fact} \label{articArm}
\begin{enumerate}
\item If $l_1^2$ and $l_2^2$ have different signs,
\[ \mathrm{Conf}_{\mathbb{M}}^P(\mathcal{L}) \supseteq \mathrm{Reg}_\mathbb{M}^P(\mathcal{L}) \supseteq \left\{ \psi \in \mathbb{M}^P ~ \middle\arrowvert ~ \lVert \psi(a) - \psi(b) \rVert \neq 0 \right\} \]
\item If $l_1^2$ and $l_2^2$ have the same sign,
\[ \mathrm{Conf}_{\mathbb{M}}^P(\mathcal{L}) \supseteq \mathrm{Reg}_\mathbb{M}^P(\mathcal{L}) \supseteq \left\{ \psi \in \mathbb{M}^P ~ \middle\arrowvert ~ \lVert \psi(a) - \psi(b) \rVert^2 \cdot l_1^2 < 0 \right\} \]
\item More generally, let $\psi \in \mathrm{Conf}_{\mathbb{M}}^P(\mathcal{L})$. If the intersection $\mathcal{H}(\psi(a), l_2) \cap \mathcal{H}(\psi(b), l_1)$ contains exactly two elements, then $\psi \in \mathrm{Reg}_\mathbb{M}^P(\mathcal{L})$.
\end{enumerate}
\end{fact}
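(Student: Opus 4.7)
The plan is to deduce (1) and (2) from (3), and to prove (3) by an implicit-function-theorem argument applied to the two distance-squared equations defining the configuration space.

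For the reductions, the hypotheses of (1) and (2) are exactly those of parts (i) and (iii), respectively, of Proposition~\ref{intersectionOfHyperbolae} (using that $l_1^2$ and $l_2^2$ share a sign in case~(2), so the condition $\lVert \psi(a)-\psi(b)\rVert^2 \cdot l_1^2 < 0$ is equivalent to the corresponding condition with $l_2^2$). In both cases, the intersection
\[ I := \mathcal{H}(\psi(a), l_2) \cap \mathcal{H}(\psi(b), l_1) \]
has exactly two elements, which automatically yields $\psi \in \mathrm{Conf}_\mathbb{M}^P(\mathcal{L})$ since any $c \in I$ gives a realization, and reduces both statements to~(3).

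For (3), I would introduce the polynomial map $F \colon \mathbb{M}^3 \to \mathbb{R}^2$ with components
\[ F_1 = \lVert \phi(c) - \phi(a) \rVert^2 - l_2^2, \qquad F_2 = \lVert \phi(c) - \phi(b) \rVert^2 - l_1^2, \]
so that $\mathrm{Conf}_\mathbb{M}(\mathcal{L}) = F^{-1}(0)$ and, by assumption together with Proposition~\ref{nbIntersection}, the fiber $p^{-1}(\psi)$ consists of exactly two realizations $\phi_1, \phi_2$ sending $c$ to the two points $c_1, c_2$ of $I$. At each $\phi_i$, the partial Jacobian of $F$ in the coordinates of $\phi(c)$ is proportional to
\[ (x_{c_i} - x_a)(t_{c_i} - t_b) - (x_{c_i} - x_b)(t_{c_i} - t_a), \]
which vanishes precisely when the vectors $c_i - \psi(a)$ and $c_i - \psi(b)$ are colinear. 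Once this colinearity is excluded, the implicit function theorem furnishes smooth local sections $\sigma_1, \sigma_2$ of $p$ on a common neighborhood $U$ of $\psi$ with $\sigma_i(\psi) = \phi_i$; after shrinking $U$ so that $p^{-1}(U) = \mathrm{Im}(\sigma_1) \sqcup \mathrm{Im}(\sigma_2)$, the restriction $p \colon p^{-1}(U) \to U$ becomes a smooth two-sheeted covering, i.e., $\psi \in \mathrm{Reg}_\mathbb{M}^P(\mathcal{L})$.

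The main obstacle is therefore the exclusion of colinearity at each $c_i$. The idea is that if $c_i - \psi(a)$ and $c_i - \psi(b)$ were colinear, the Minkowski-orthogonal tangent lines to the two hyperbolae would coincide at $c_i$, so the hyperbolae would be tangent there. In view of the quadratic polynomial in $z$ exhibited in the proof of Proposition~\ref{nbIntersection}, such tangency forces $z_{c_i}$ to be a double root, which prevents the existence of a second intersection point and contradicts $c_1 \neq c_2$. Thus transversality at both points is imposed by the algebraic structure inherited from Proposition~\ref{nbIntersection}, and the argument concludes.
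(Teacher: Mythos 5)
Your proof is correct and follows essentially the same route as the paper: the paper observes directly that two distinct intersection points correspond to two simple roots of the degree-$2$ polynomial from the proof of Proposition~\ref{nbIntersection}, hence depend smoothly on the coefficients, while you repackage this as an implicit-function-theorem argument whose non-degeneracy condition (non-colinearity of $c_i-\psi(a)$ and $c_i-\psi(b)$) you then verify by returning to the same double-root criterion. The tangency detour is sound but unnecessary --- once the two intersection points are known to be distinct roots of a quadratic, both roots are automatically simple.
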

\begin{proof}
The fact that the intersection $\mathcal{H}(\psi(a), l_2) \cap \mathcal{H}(\psi(b), l_1)$ contains exactly two elements implies that these are obtained from simple roots of a polynomial of degree $2$ (see the proof of Proposition~\ref{nbIntersection}). Therefore, locally, the roots depend smoothly on the coefficients.
\end{proof}

\subsection{The rigidified square linkage} \label{rigidSquareSect}

This linkage is well-known in the Euclidean plane. It is the usual solution to the problem of \emph{degenerate configurations} of the square. It is very useful to notice that it does behave in the same way in the Minkowski plane.

We first explain why we need to rigidify square linkages. If one considers the ordinary square linkage (see the following figure), there are many realizations $\phi$ in which $\phi(a)\phi(b)\phi(c)\phi(d)$ is not a parallelogram (we call these realizations \emph{degenerate realizations of the square}).

\begin{center}
\begin{tabular}{ccc}
$\vcenter{\hbox{
\begin{tikzpicture}[scale=1]
\node[vertex, label=left:$a$] (a) at (-1,1) {};
\node[vertex, label=left:$b$] (b) at (-1,-1) {};
\node[vertex, label=right:$c$] (c) at (1,-1) {};
\node[vertex, label=right:$d$] (d) at (1,1) {};
\draw (a) to[bend right=20] node[edge] {$l$} (b);
\draw (b) to[bend right=20] node[edge] {$l$} (c);
\draw (c) to[bend right=20] node[edge] {$l$} (d);
\draw (d) to[bend right=20] node[edge] {$l$} (a);
\end{tikzpicture}
}}$
&
$\vcenter{\hbox{
\begin{tikzpicture}[scale=1]
\node[vertex, label=left:$a$] (a) at (2,1) {};
\node[vertex, label=left:$b$] (b) at (0,0) {};
\node[vertex, label=right:$c$] (c) at ({sqrt(3)},0) {};
\node[vertex, label=right:$d$] (d) at ({2+sqrt(3)},1) {};
\draw (a) to node[edge] {$l$} (b);
\draw (b) to node[edge] {$l$} (c);
\draw (c) to node[edge] {$l$} (d);
\draw (d) to node[edge] {$l$} (a);
\end{tikzpicture}
}}$
&
$\vcenter{\hbox{
\begin{tikzpicture}[scale=1]
\node[vertex, label=left:$a$] (a) at (2,1) {};
\node[vertex, label=left:{$b=d$}] (b) at (0,0) {};
\node[vertex, label=right:$c$] (c) at ({sqrt(3)},0) {};
\draw (a) to node[edge] {$l$} (b);
\draw (b) to node[edge] {$l$} (c);
\end{tikzpicture}
}}$
\\
\begin{minipage}{4cm}
\begin{center}
The ordinary square linkage
\end{center}
\end{minipage}
&
\begin{minipage}{4cm}
\begin{center}
A realization of the ordinary square linkage
\end{center}
\end{minipage}
&
\begin{minipage}{4cm}
\begin{center}
A degenerate realization of the ordinary square linkage
\end{center}
\end{minipage}
\end{tabular}
\end{center}

\bigskip

In degenerate realizations, two vertices are sent to the same point of $\mathbb{M}$.

To avoid degenerate realizations, we add two vertices and five edges to the square $abcd$. We call this operation ``rigidifying the square''.

\begin{center}
\begin{tikzpicture}[scale=1.5]
\node[input, label=right:$a$] (a) at (-1,1) {};
\node[vertex, label=left:$b$] (b) at (-1,-1) {};
\node[input, label=left:$c$] (c) at (1,-1) {};
\node[vertex, label=right:$d$] (d) at (1,1) {};
\node[vertex, label=below:$e$] (e) at (0,-1) {};
\node[vertex, label=above:$f$] (f) at (0,1) {};
\draw (a) to[bend right] node[edge] {$l$} (b);
\draw (b) to[bend right=70] node[edge] {$l$} (c);
\draw (c) to[bend right] node[edge] {$l$} (d);
\draw (d) to[bend right=70] node[edge] {$l$} (a);
\draw (a) to[bend right] node[edge] {$l/2$} (f);
\draw (f) to[bend right] node[edge] {$l/2$} (d);
\draw (c) to[bend right] node[edge] {$l/2$} (e);
\draw (e) to[bend right] node[edge] {$l/2$} (b);
\draw (e) to node[edge] {$l$} (f);
\end{tikzpicture}
\end{center}

This linkage is called the \emph{rigidified square}. The input set is $P = \{a, c\}$.

We assume $l \neq 0$.

\begin{prop} \label{rigidSquare}
\begin{enumerate}
\item For all $\phi \in \mathrm{Conf}_{\mathbb{M}}(\mathcal{L})$ we have \[ \phi(b) - \phi(a) = \phi(c) - \phi(d) \] ($\phi(a)\phi(b)\phi(c)\phi(d)$ is an ``affine parallelogram'').

\item For all $\phi \in \mathrm{Conf}_{\mathbb{M}}(\mathcal{L})$ such that $\phi(b) \neq \phi(d)$ and $\phi(a) \neq \phi(c)$, we have $\phi|_P \in \mathrm{Reg}_\mathbb{M}^P(\mathcal{L})$. In particular, $\mathrm{Reg}_\mathbb{M}^P(\mathcal{L})$ contains
\[ \left\{\psi \in \mathbb{M}^{\{a, c\}} \middle\arrowvert \lVert \psi(a) - \psi(c) \rVert \cdot l^2 < 0 \right\}. \]
\end{enumerate}
\end{prop}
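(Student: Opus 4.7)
The plan is to handle the two claims in order, with the first providing the key geometric structure used in the second.

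For the affine parallelogram property (Claim 1), the strategy is to first show that the added midpoint vertices $e$ and $f$ are forced to be the algebraic midpoints $(b+c)/2$ and $(a+d)/2$, and then to exploit this to force $b-a = c-d$. More precisely, for any realization $\phi$, the edges $af$ and $fd$ have length $l/2$ while $ad$ has length $l$, so $\lVert \phi(f)-\phi(a) \rVert + \lVert \phi(d)-\phi(f) \rVert = \lVert \phi(d)-\phi(a) \rVert$. By the equality case of the triangle inequality established earlier, $\phi(f)-\phi(a)$ and $\phi(d)-\phi(f)$ are colinear; since they share the same norm and their sum has nonzero norm $l$, the proportionality constant is $+1$, giving $\phi(f) = (\phi(a)+\phi(d))/2$. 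The same argument for $b,e,c$ yields $\phi(e) = (\phi(b)+\phi(c))/2$. Now the constraint $\lVert \phi(e)-\phi(f) \rVert = l$ computes, using these midpoint expressions, to $\lVert (\phi(b)-\phi(a))+(\phi(c)-\phi(d)) \rVert = 2l$. Setting $u = \phi(b)-\phi(a)$ and $v = \phi(c)-\phi(d)$, we have $\lVert u \rVert = \lVert v \rVert = l$ and $\lVert u+v \rVert = \lVert u \rVert + \lVert v \rVert$, so $u$ and $v$ are colinear with the same norm and their sum is nonzero, forcing $u=v$.

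For the regularity (Claim 2), fix $\phi \in \mathrm{Conf}_\mathbb{M}(\mathcal{L})$ with $\phi(b) \neq \phi(d)$ and $\phi(a) \neq \phi(c)$, and set $\psi = \phi|_P$. Both $\phi(b)$ and $\phi(d)$ lie in $\mathcal{H}(\psi(a), l) \cap \mathcal{H}(\psi(c), l)$; since $\psi(a)\neq \psi(c)$ and $l^2\neq 0$, Proposition~\ref{nbIntersection} gives that this intersection has cardinality at most $2$, and by assumption it equals exactly $\{\phi(b),\phi(d)\}$. Fact~\ref{articArm}(3), applied to the robotic arm with endpoints $a,c$ and third vertex $d$ (resp. $b$), shows that the two intersection points depend smoothly on $(\psi(a),\psi(c))$ in a neighborhood of $\psi$. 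Together with the smooth assignments $\phi(e) = (\phi(b)+\phi(c))/2$ and $\phi(f) = (\phi(a)+\phi(d))/2$ from Claim 1, this gives $\pi_V^{-1}$ as a smooth $2$-fold cover near $\psi$, so $\psi \in \mathrm{Reg}_\mathbb{M}^P(\mathcal{L})$.

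For the final ``in particular'' statement, suppose $\lVert \psi(a)-\psi(c) \rVert^2 \cdot l^2 < 0$ for some $\psi \in \mathbb{M}^{\{a,c\}}$ (I read the statement this way, the product with $l^2$ only making sense as a real inequality when both squares are real). Then $\psi(a)\neq \psi(c)$, and Proposition~\ref{intersectionOfHyperbolae}(3), applied with $r_0=r_1=l$ so that $r_0^2 r_1^2 = l^4 > 0$, yields that $\mathcal{H}(\psi(a),l) \cap \mathcal{H}(\psi(c),l)$ has exactly two points $p_1,p_2$. These are interchanged by the central symmetry around the midpoint of $\psi(a)\psi(c)$, so $p_1+p_2 = \psi(a)+\psi(c)$, which is exactly the parallelogram condition. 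Setting $\phi(d)=p_1$, $\phi(b)=p_2$, $\phi(e)=(\phi(b)+\phi(c))/2$, $\phi(f)=(\phi(a)+\phi(d))/2$ produces a realization (a direct check verifies all nine edge length constraints, with $\lVert \phi(e)-\phi(f)\rVert = \lVert \phi(b)-\phi(a)\rVert = l$ following from $\phi(b)+\phi(d)=\phi(a)+\phi(c)$) with $\phi(b)\neq\phi(d)$, so Claim 2 applies and $\psi\in \mathrm{Reg}_\mathbb{M}^P(\mathcal{L})$.

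The only delicate point is the two colinearity arguments in Claim 1, where one must be careful to rule out the wrong sign of the proportionality constant; this is where the non-vanishing of $l$ is used. Everything else reduces to bookkeeping and to Fact~\ref{articArm}(3) and Proposition~\ref{intersectionOfHyperbolae}(3), which do the real work.
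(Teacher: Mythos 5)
Your proof is correct, and while part (2) matches the paper, your argument for part (1) takes a genuinely different route. Both proofs begin the same way, forcing $\phi(f)=\frac{\phi(a)+\phi(d)}{2}$ and $\phi(e)=\frac{\phi(b)+\phi(c)}{2}$ from the equality case of the triangle inequality. From there the paper proceeds by a case analysis: it treats $\phi(a)=\phi(c)$ and $\phi(d)=\frac{\phi(a)+\phi(c)}{2}$ separately, and in the generic case uses Proposition~\ref{nbIntersection} (the bound $\mathrm{card}\left(\mathcal{H}(\phi(a),l)\cap\mathcal{H}(\phi(c),l)\right)\leq 2$) together with the observation that $\phi(a)+\phi(c)-\phi(d)$ also lies in that intersection, in order to pin down $\phi(b)$. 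You instead apply the equality-case proposition a second time, to $u=\phi(b)-\phi(a)$ and $v=\phi(c)-\phi(d)$, where the edge $ef$ gives $\lVert u+v\rVert=2l=\lVert u\rVert+\lVert v\rVert$; colinearity then forces $v=\pm u$, and $v=-u$ is excluded because $l\neq 0$. This is cleaner: it treats the degenerate positions uniformly, with no case distinction, and uses exactly the same nine edge constraints; the only thing the paper's route buys is that the intersection $I=\mathcal{H}(\phi(a),l)\cap\mathcal{H}(\phi(c),l)$ it introduces is the very object reused for regularity. For part (2) you do what the paper does (it simply cites Fact~\ref{articArm}), with welcome extra detail: the identification of the fiber with the two simple roots of the quadratic, the central-symmetry identity $p_1+p_2=\psi(a)+\psi(c)$, and Proposition~\ref{intersectionOfHyperbolae}(3) for the ``in particular'' set; your reading of that set as $\lVert\psi(a)-\psi(c)\rVert^2\cdot l^2<0$ is indeed the intended one, consistent with Fact~\ref{articArm}(2).
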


\begin{proof}
\begin{enumerate}
\item Let $\phi \in \mathrm{Conf}_{\mathbb{M}}(\mathcal{L})$. From the equality case in the triangle inequality, we have $\phi(f) = \frac{\phi(a) + \phi(d)}{2}$ and $\phi(e) = \frac{\phi(b) + \phi(c)}{2}$.
\subparagraph{Case 1: $\phi(a) = \phi(c)$.} In this case, $\lVert\phi(f) - \phi(c) \rVert = \lVert \phi(f) - \phi(a) \rVert$. Therefore, \[\lVert \phi(c) - \phi(e) \rVert + \lVert \phi(f) - \phi(c) \rVert = \lVert \phi(f) - \phi(e) \rVert,\] so $\phi(e)$, $\phi(c)$ and $\phi(f)$ are aligned, so $\phi(b), \phi(c)$ and $\phi(d)$ are aligned and therefore $\phi(b) - \phi(a) = \phi(c) - \phi(d)$.
\subparagraph{Case 2: $\phi(d) = \frac{\phi(a) + \phi(c)}{2}$.} We have \[ \lVert \phi(b) - \phi(a) \rVert + \lVert \phi(b) - \phi(c) \rVert = \lVert \phi(c) - \phi(a) \rVert ~~ (= 2l),\] so $\phi(a)$, $\phi(c)$ and $\phi(b)$ are aligned, thus $\phi(d) = \phi(b)$. We are taken back to the first case.
\subparagraph{Case 3: $\phi(d) \neq \frac{\phi(a) + \phi(c)}{2}$ and $\phi(a) \neq \phi(c)$.} Let $I = \mathcal{H}(\phi(a), l) \cap \mathcal{H}(\phi(c), l)$. We have $\phi(d) \in I$, $\phi(b) \in I$, and $\mathrm{card} (I) \leq 2$.

We have $\phi(a) + \phi(c) - \phi(d) \in I$. If $\phi(a) + \phi(c) - \phi(d) = \phi(d)$ then we are in the second case. If not, we have $I = \{\phi(d), \phi(a) + \phi(c) - \phi(d)\}$ and therefore, either $\phi(b) = \phi(d)$ (this is again the first case) or $\phi(b) = \phi(a) + \phi(c) - \phi(d)$, \emph{i.e.} $\phi(b) - \phi(a) = \phi(c) - \phi(d)$.

\item This is a consequence of Fact~\ref{articArm}.
\end{enumerate}
\end{proof}

\subsection{The Peaucellier inversor} \label{peauc}

\begin{center}
\begin{tikzpicture}[scale=2]
\node[fixed, label=below:$a$] (v5) at (-2,0) {};
\node[output, label=left:$d$] (v3) at (0,0) {};
\node[vertex, label=above:$b$] (v2) at ({sqrt(2)/2},{sqrt(2)/2}) {};
\node[vertex, label=below:$c$] (v1) at ({sqrt(2)/2},{-sqrt(2)/2}) {};
\node[input, label=left:$e$] (v0) at ({sqrt(2)},0) {};
\node[vertex, label=right:$g$] (v6) at (2.5,0.3) {};
\draw (v3) to[bend left=20] node[edge] {$R$} (v2) to[bend left=20]node[edge] {$R$}  (v0) to[bend left=20]node[edge] {$R$} (v1) to[bend left=20] node[edge] {$R$} (v3);
\draw (v2) to[bend right=20] node[edge] {$ir$} (v5) to[bend right=20] node[edge] {$ir$} (v1);
\draw (v2) to[bend left=20] node[edge] {$l$} (v6) to[bend left=20] node[edge] {$il$} (v1);
\end{tikzpicture}
\end{center}

Choose $r, R, l > 0$ and let $F = \{a\}$, $\phi_0(a) = \begin{pmatrix}0 \\ 0 \end{pmatrix}$, $P = \{e\}$, $Q = \{d\}$. The square $bdce$ is rigidified (see Section~\ref{rigidSquareSect}), but for convenience, we do not draw on the figure the vertices which are necessary for the rigidification.

The vertex $g$ and the two edges $(bg)$ and $(cg)$ are not part of the traditional Peaucellier inversor, but they are here to prevent $\phi(b)$ and $\phi(c)$ from being equal.

\begin{figure}[!ht]
\begin{center}

\begin{tikzpicture}[scale=0.4]
\node[vertex, label=below:$b$] (b) at (-7.05, 20.58) {};
\node[input, label=above:$e$] (e) at (0.62, 14.64) {};
\node[vertex, label=below:$c$] (c) at (5.74, 12.94) {};
\node[output, label=below:$d$] (d) at (-1.93, 18.91) {};
\node[fixed, label=below:$a$] (a) at (3.40, 9.95) {};
\node[vertex, label=below:$g$] (g) at (-4.06, 22.94) {};
\draw (b) -- (e) -- (c) -- (d) -- (b);
\draw (b) -- (a) -- (c);
\draw (b) -- (g) -- (c);
\end{tikzpicture}
\hspace{2cm}
\begin{tikzpicture}[scale=0.4]
\node[vertex, label=above:$b$] (b) at (2.45, 12.09) {};
\node[input, label=above:$e$] (e) at (-3.09, 9.40) {};
\node[vertex, label=below:$c$] (c) at (2.07, 7.62) {};
\node[output, label=below:$d$] (d) at (7.61, 10.30) {};
\node[fixed, label=left:$a$] (a) at (3.40, 9.95) {};
\node[vertex, label=below:$g$] (g) at (4.69, 10.80) {};
\draw (b) -- (e) -- (c) -- (d) -- (b);
\draw (b) -- (a) -- (c);
\draw (b) -- (g) -- (c);
\end{tikzpicture}
\end{center}
\caption{Two realizations of the same Peaucellier inversor in the Minkowski plane.}
\end{figure}
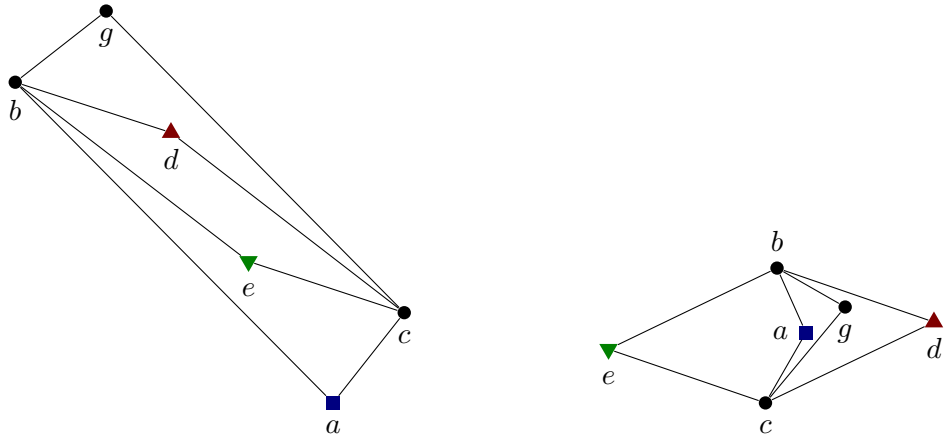

In the Euclidean plane, it is well-known that the Peaucellier linkage is a functional linkage for the inversion with respect to the circle $\mathcal{C} \left(0, \sqrt{\lvert R^2 - r^2 \rvert}\right)$, that is, the function $\alpha \longmapsto \frac{\lvert R^2 - r^2 \rvert}{\lVert \alpha \rVert^2} \alpha$. In the Minkowski plane, we will prove that it is essentially functional for inversion with respect to the hyperbola $\mathcal{H} \left(0, \sqrt{R^2 + r^2}\right)$. More precisely, it is functional for the function $\alpha \longmapsto -\frac{R^2 + r^2}{\lVert \alpha \rVert^2} \alpha$ (in the version of the Peaucellier inversor which we choose, a ``-'' sign appears).

\begin{prop} \label{peaucFonct}
For all $\phi \in \mathrm{Conf}_{\mathbb{M}}(\mathcal{L})$, we have $\lVert \phi(e) \rVert \neq 0$ and $\phi(d) = - \frac{R^2 + r^2}{\lVert \phi(e) \rVert^2} \phi(e)$.
\end{prop}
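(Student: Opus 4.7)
I fix $a$ at the origin (as required by $\phi_0$) and, by abuse of notation, write $b, c, d, e, g$ for $\phi(b), \phi(c), \phi(d), \phi(e), \phi(g)$. The edges incident to $a$ yield $\lVert b \rVert^2 = \lVert c \rVert^2 = -r^2$; the four edges of the rigidified square (all of length $R$) yield $\lVert d - b \rVert^2 = \lVert d - c \rVert^2 = \lVert e - b \rVert^2 = \lVert e - c \rVert^2 = R^2$; and Proposition~\ref{rigidSquare} applied to the square $bdce$ gives the parallelogram identity $b + c = d + e$.

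Expanding the equalities $\lVert d - b \rVert^2 = \lVert d - c \rVert^2$ and $\lVert e - b \rVert^2 = \lVert e - c \rVert^2$ and using $\lVert b \rVert^2 = \lVert c \rVert^2$ gives $\varphi(d, b - c) = \varphi(e, b - c) = 0$, and also $\varphi(b + c, b - c) = \lVert b \rVert^2 - \lVert c \rVert^2 = 0$. The auxiliary vertex $g$ forces $b \ne c$, since $\lVert g - b \rVert^2 = l^2 \ne -l^2 = \lVert g - c \rVert^2$. Moreover, $b - c$ must be non-lightlike: were it a nonzero lightlike vector, its Minkowski-orthogonal complement would be the line it spans, so $\varphi(b + c, b - c) = 0$ would force $b + c$ to be proportional to $b - c$, making both $b$ and $c$ lightlike --- contradicting $\lVert b \rVert^2 = -r^2 \ne 0$.

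Since $b - c$ is non-lightlike and nonzero, its orthogonal complement is one-dimensional, and both $d + e \ (= b + c)$ and $d - e$ lie in it. Hence $d - e$ and $d + e$ are collinear, which forces $d$ and $e$ themselves to be collinear (the degenerate sub-case $d + e = 0$ gives $e = -d$ directly). To compute the scalar of proportionality, I expand $\lVert e - b \rVert^2 = \lVert e - c \rVert^2 = R^2$ using $\lVert b \rVert^2 = \lVert c \rVert^2 = -r^2$ to obtain $\varphi(e, b + c) = \lVert e \rVert^2 - R^2 - r^2$; substituting $b + c = d + e$ then yields $\varphi(d, e) = -(R^2 + r^2)$. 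Since this is nonzero and $d = \lambda e$ for some scalar $\lambda$, we must have $\lVert e \rVert^2 \ne 0$ (otherwise $\varphi(d, e) = \lambda \lVert e \rVert^2 = 0$), and $\lambda = \varphi(d, e) / \lVert e \rVert^2 = -(R^2 + r^2) / \lVert e \rVert^2$, as claimed.

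The main obstacle is the collinearity of $d$ and $e$: the classical Euclidean Peaucellier argument exploits the symmetry of the rhombus $bdce$ about the line $ae$ to conclude that $a, d, e$ are collinear ``for free'', but in the Minkowski setting we have only an affine parallelogram $bdce$, so collinearity must be extracted from the Minkowski orthogonality identities alone. This is precisely why the auxiliary vertex $g$ --- which ensures $b \ne c$ and, together with $\lVert b \rVert^2 = \lVert c \rVert^2 \ne 0$, forces $b - c$ to be non-lightlike --- is an essential part of the linkage.
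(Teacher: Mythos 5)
Your proof is correct, and it takes a genuinely different route from the paper's. The paper passes to lightlike coordinates: since $\phi(b)\neq\phi(c)$ (forced by $g$), the pair $\{\phi(b),\phi(c)\}$ is exactly the intersection $\mathcal{H}(\phi(a),ir)\cap\mathcal{H}(\phi(e),R)$, so their $z$- (and $y$-) coordinates are the two roots of an explicit quadratic; Vieta's formulas then give $\phi(b)+\phi(c)$ in terms of $\phi(e)$, and the parallelogram identity $\phi(d)=\phi(b)+\phi(c)-\phi(e)$ finishes. You keep the same two structural inputs ($b\neq c$ via $g$, and the parallelogram identity from the rigidified square) but replace the coordinate/Vieta computation by coordinate-free bilinear algebra: the equal edge lengths give $\varphi(\phi(d),\phi(b)-\phi(c))=\varphi(\phi(e),\phi(b)-\phi(c))=0$, hence $\phi(d)$ and $\phi(e)$ lie on the line $(\phi(b)-\phi(c))^\bot$, and the proportionality factor is pinned down by $\varphi(\phi(d),\phi(e))=-(R^2+r^2)$. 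One advantage of your version is that $\lVert\phi(e)\rVert\neq 0$ falls out as a byproduct of $\varphi(\phi(d),\phi(e))\neq 0$, whereas the paper must secure it beforehand in order to divide by $y_{\phi(e)}$ and $z_{\phi(e)}$; in exchange, the paper's computation yields the explicit coordinates of $\phi(b)+\phi(c)$, slightly more than the statement needs. Two small streamlinings: the non-lightlike discussion of $\phi(b)-\phi(c)$ is dispensable, since in the nondegenerate Minkowski plane $v^\bot$ is one-dimensional for every $v\neq 0$ and the lightlike case is excluded a posteriori by $\varphi(\phi(d),\phi(e))\neq 0$; and writing $\phi(d)=\lambda\phi(e)$ tacitly assumes $\phi(e)\neq 0$, which you should extract first, either from $\varphi(\phi(d),\phi(e))\neq 0$ or simply from $R\neq ir$.
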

\begin{proof}
Let $\phi \in \mathrm{Conf}_{\mathbb{M}}(\mathcal{L})$. We know that $\phi(b) \neq \phi(c)$ (because the edges $(gb)$ and $(gc)$ do not have the same lengths) so the intersection of the two hyperbolae $\mathcal{H}(\phi(e), R)$ and $\mathcal{H}(\phi(a), ir)$ is exactly $\{\phi(b), \phi(c)\}$. Moreover, $\lVert \phi(e) \rVert \neq 0$ (because the edges $(be)$ and $(ba)$ do not have the same lengths).

Then, $(y_{\phi(b)}, z_{\phi(b)})$ and $(y_{\phi(c)}, z_{\phi(c)})$ are the two solutions of the following system with unknown $(y, z)$:

\[
\left\{ \begin{aligned} yz & = -r^2 \\ (y-y_{\phi(e)})(z - z_{\phi(e)}) & = R^2. \end{aligned} \right.
\]

This system is equivalent to

\[
\left\{ \begin{aligned} yz & = -r^2 \\ -y_{\phi(e)}z^2 + (y_{\phi(e)}z_{\phi(e)}-r^2-R^2)z + r^2 z_{\phi(e)} & = 0. \end{aligned} \right.
\]

We deduce that
\[ z_{\phi(b)} + z_{\phi(c)} = z_{\phi(e)} - \frac{r^2 + R^2}{y_{\phi(e)}} \]
and similarly
\[ y_{\phi(b)} + y_{\phi(c)} = y_{\phi(e)} - \frac{r^2 + R^2}{z_{\phi(e)}} \]
which gives the desired result, since $\phi(d) = \phi(b) + \phi(c) - \phi(e)$.
\end{proof}

\begin{fact} \label{confPeauc}
The workspace of the vertex $e$, $\mathrm{Conf}_{\mathbb{M}}^P(\mathcal{L})$, contains the spacelike cone
\[\left \{\alpha \in \mathbb{M} ~ \middle\arrowvert ~ \lVert \alpha \rVert^2 > 0 \right \}.\]
\end{fact}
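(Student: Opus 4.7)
The plan is to construct, for every $\alpha\in\mathbb{M}$ satisfying $\lVert\alpha\rVert^2>0$, an explicit realization $\phi$ of $\mathcal{L}$ with $\phi(e)=\alpha$ (and $\phi(a)=0$, which is imposed). Once $\phi(b),\phi(c),\phi(d),\phi(g)$ are located, the two interior midpoint vertices of the rigidified square $bdce$ are forced to be $(\phi(b)+\phi(c))/2$ and $(\phi(d)+\phi(e))/2$ by Proposition~\ref{rigidSquare} and automatically satisfy all the $R/2$- and $R$-edges. So the entire content is to place $\phi(b),\phi(c),\phi(d),\phi(g)$.

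First, I position $\phi(b)$ and $\phi(c)$ as the two intersection points of the hyperbolae $\mathcal{H}(\alpha,R)$ and $\mathcal{H}(0,ir)$. Since $r_0^2 r_1^2=-R^2 r^2<0$ and the squared distance between the centers is $\lVert\alpha\rVert^2>0$, hence nonzero, case~(1) of Proposition~\ref{intersectionOfHyperbolae} applies and yields exactly two intersection points, which I assign arbitrarily to $\phi(b)$ and $\phi(c)$. The same statement further gives $\lVert\phi(b)-\phi(c)\rVert^2\cdot\lVert\alpha\rVert^2<0$, and therefore $\lVert\phi(b)-\phi(c)\rVert^2<0$; this timelike separation of $\phi(b)$ and $\phi(c)$ is what will unlock the placement of $\phi(g)$ below.

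Next, I set $\phi(d):=\phi(b)+\phi(c)-\alpha$. Then $\phi(d)-\phi(b)=\phi(c)-\alpha$ and $\phi(d)-\phi(c)=\phi(b)-\alpha$, so all four edges $db$, $be$, $ec$, $cd$ of the rigidified square have pseudo-norm $R$. Finally, to locate $\phi(g)$, I apply Proposition~\ref{intersectionOfHyperbolae}~(1) once more to $\mathcal{H}(\phi(b),l)$ and $\mathcal{H}(\phi(c),il)$: here $r_0^2 r_1^2=-l^4<0$ and the squared distance between the centers is $\lVert\phi(b)-\phi(c)\rVert^2<0\neq 0$ by the previous step, so the intersection contains two points, and either may be taken as $\phi(g)$.

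The resulting $\phi$ is a realization of $\mathcal{L}$ with $\phi(e)=\alpha$, proving $\alpha\in\mathrm{Conf}_{\mathbb{M}}^P(\mathcal{L})$. There is no serious obstacle: the proof amounts to a threefold invocation of case~(1) of Proposition~\ref{intersectionOfHyperbolae}, and the hypothesis $\lVert\alpha\rVert^2>0$ is exactly the input this case needs the first time, while simultaneously forcing the timelike separation $\lVert\phi(b)-\phi(c)\rVert^2<0$ that feeds the same case the third time. The only point worth checking carefully is the last one, since the intermediate placement $\phi(d)=\phi(b)+\phi(c)-\alpha$ of a non-input, non-output vertex could in principle be non-unique in the configuration space, but this is irrelevant for the mere existence statement we need.
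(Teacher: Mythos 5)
Your construction is essentially the paper's own proof: both place $\phi(b),\phi(c)$ as the two points of $\mathcal H(0,ir)\cap\mathcal H(\alpha,R)$ via case (1) of Proposition~\ref{intersectionOfHyperbolae}, use the conclusion $\lVert\phi(b)-\phi(c)\rVert^2<0$ of that same case, take $\phi(d)$ to be the point completing the parallelogram (the paper picks it as the second point of $\mathcal H(\phi(b),R)\cap\mathcal H(\phi(c),R)$, which amounts to the same thing), and place $\phi(g)$ in $\mathcal H(\phi(b),l)\cap\mathcal H(\phi(c),il)$ using the timelike separation of $\phi(b)$ and $\phi(c)$. Your only extra step, the placement of the rigidification vertices, is welcome (the paper leaves it implicit) but contains a labelling slip: those two vertices are the midpoints of two \emph{opposite sides} of the square $bdce$, e.g. $\frac{\phi(b)+\phi(e)}{2}$ and $\frac{\phi(c)+\phi(d)}{2}$, not $\frac{\phi(b)+\phi(c)}{2}$ and $\frac{\phi(d)+\phi(e)}{2}$; the latter are the midpoints of the diagonals, which coincide in your parallelogram configuration and hence could not be joined by the required edge of length $R$. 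With the correct identification the check is immediate: since $\phi(d)-\phi(b)=\phi(c)-\phi(e)$, the segment joining the two side midpoints is a translate of a side, so it has pseudo-norm $R$, and the four half-edges of length $R/2$ are clear. Everything else in the proposal is sound.
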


\begin{proof}
Here, we use Proposition~\ref{intersectionOfHyperbolae} at each step. Choose any $\alpha$ in the spacelike cone, let $\phi(e) = \alpha$, and choose $\phi(b)$ and $\phi(c)$ such that $\{\phi(b), \phi(c)\} = \mathcal H(\phi_0(a), ir) \cap \mathcal H(\alpha, R)$. Then, $\lVert \phi(c) - \phi(b) \rVert^2 < 0$, so it is possible to choose $\phi(d)$ such that $\mathcal H(\phi(b), R) \cap \mathcal H(\phi(c), R) = \{ \phi(d), \phi(e) \}$. Finally, choose $\phi(g)$ in the intersection $\mathcal H(\phi(b), l) \cap \mathcal H(\phi(c), il)$.
\end{proof}

\begin{prop} \label{regPeauc}
For this linkage, $\mathrm{Reg}_\mathbb{M}^P(\mathcal{L}) = \mathrm{Conf}_{\mathbb{M}}^P(\mathcal{L})$.
\end{prop}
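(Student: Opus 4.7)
The plan is to fix $\psi \in \mathrm{Conf}^P_{\mathbb M}(\mathcal L)$ and establish $\psi \in \mathrm{Reg}^P_{\mathbb M}(\mathcal L, \{v\})$ for each vertex $v \in V$ individually, then combine these using the Fact immediately following Definition~\ref{defR} (regularity distributes over unions of vertex sets). The key initial observation is that Proposition~\ref{peaucFonct} forces $\|\psi(e)\|^2 \ne 0$, a non-degeneracy that persists on a neighborhood of $\psi$ inside $\mathrm{Conf}^P_{\mathbb M}(\mathcal L)$.

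For $v \in \{b, c\}$, I would view the sub-linkage consisting of the fixed vertex $a$, the input $e$ and $v$ as a robotic arm whose squared edge-lengths $-r^2$ and $R^2$ have opposite signs: Fact~\ref{articArm}(1) then applies since $\|\psi(e) - \phi_0(a)\| = \|\psi(e)\| \ne 0$, and Fact~\ref{changeInputs} transfers the regularity to the full linkage $\mathcal L$. Next, for $v = g$, once $b, c$ are known to be regular, I would take the robotic arm sub-linkage $(b, c, g)$ whose squared edge-lengths $l^2$ and $-l^2$ again have opposite signs: Proposition~\ref{intersectionOfHyperbolae}(1) applied to $\mathcal{H}(\phi_0(a), ir) \cap \mathcal{H}(\phi(e), R) = \{\phi(b), \phi(c)\}$ gives $\|\phi(b) - \phi(c)\|^2 \cdot \|\phi(e)\|^2 < 0$, which together with $\|\phi(e)\|^2 \ne 0$ yields $\|\phi(b) - \phi(c)\| \ne 0$; Fact~\ref{articArm}(1) and Fact~\ref{changeInputs} then finish the job. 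For $v = d$, rather than routing through the input-output structure of the rigidified square, my plan is to invoke directly the smooth formula $\phi(d) = -\tfrac{R^2 + r^2}{\|\phi(e)\|^2}\phi(e)$ of Proposition~\ref{peaucFonct}: it is well-defined and smooth on a neighborhood of $\psi$ inside $\mathrm{Conf}^P_{\mathbb M}(\mathcal L)$, so the restriction map has singleton fibers depending smoothly on the input and is trivially a smooth covering. For each rigidification midpoint $v$, the equality case of the triangle inequality together with the nonzero side length $R$ forces $v$ to be the affine midpoint of its two neighbouring corners, a smooth function of the already-regular corners.

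The step I expect to be subtlest is the treatment of $d$. A naive application of Proposition~\ref{rigidSquare}(2) to the rigidified square $bdce$ with inputs $\{b, c\}$ would exclude the configurations where $\phi(d) = \phi(e)$, which occur precisely on the hyperbola $\|\phi(e)\|^2 = -(R^2 + r^2)$, and that strategy would break down there; handling $d$ instead via the explicit Peaucellier formula bypasses this pitfall and works uniformly on the whole workspace. Assembling the four cases via the Fact after Definition~\ref{defR} gives $\psi \in \mathrm{Reg}^P_{\mathbb M}(\mathcal L)$, and since $\psi$ was arbitrary this is the asserted equality.
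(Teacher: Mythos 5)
Your argument is correct, but it follows a genuinely different route from the paper, and the difference is substantive. The paper proves the proposition by decomposing the Peaucellier inversor into elementary pieces (two arms, the rigidified square, the $g$-arm) and propagating regularity through the combination machinery of Fact~\ref{propComb}; at the square step it invokes Proposition~\ref{rigidSquare}(2) after asserting, via Proposition~\ref{peaucFonct}, that $\phi(d_3)\neq\phi(e_3)$ for every realization. You instead prove regularity vertex by vertex and treat $d$ directly through the smooth inversion formula $\phi(d)=-\tfrac{R^2+r^2}{\lVert\phi(e)\rVert^2}\phi(e)$. This matters: the workspace of $e$ is not only the spacelike cone (Fact~\ref{confPeauc} is only an inclusion) but contains all timelike points as well, and on the locus $\lVert\phi(e)\rVert^2=-(R^2+r^2)$ one really does have $\phi(d)=\phi(e)$ (for instance $r=R=1$, $\phi(e)$ with lightlike coordinates $(\sqrt2,-\sqrt2)$), so the paper's appeal to Proposition~\ref{rigidSquare}(2) is unjustified there; indeed the square's input pair then sits at the tangency $\lVert\phi(b)-\phi(c)\rVert^2=4R^2$, where it is not a regular input of the standalone rigidified square, so the hypothesis of Fact~\ref{propComb} genuinely fails at those configurations even though the proposition itself remains true there, exactly as your direct computation shows. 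What the paper's route buys is that it exercises the general combination formalism, which serves as the template for all the later $\mathrm{Reg}$ computations; what your route buys is robustness at this degenerate locus, which your formula-based handling of $d$ and of the midpoints bypasses entirely. One caveat on your write-up: Fact~\ref{changeInputs} only compares two input sets on one and the same linkage, so it does not by itself transfer regularity of the three-vertex arm (or of the $(b,c,g)$ arm) to regularity of $\{b\}$, $\{c\}$, $\{g\}$ inside the full linkage; to make that step airtight you should either route it through Fact~\ref{propComb} as the paper does, or observe (as in the construction behind Fact~\ref{confPeauc}, which works whenever $\lVert\psi(e)\rVert^2\neq0$) that \emph{both} points of $\mathcal{H}(\phi_0(a),ir)\cap\mathcal{H}(\psi(e),R)$, and both candidate positions of $g$, extend to realizations of the whole linkage, so the fibers of $\pi_{\{v\}}$ are exactly the simple-root branches, which are smooth by Proposition~\ref{nbIntersection}. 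With that small patch your proof is complete.
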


\begin{proof}
We give a detailed proof in order to illustrate the use of Fact~\ref{propComb}. This method is the key to many proofs concerning $\mathrm{Reg}_\mathbb{M}^P(\mathcal{L})$, later in this paper.

The Peaucellier inversor may be seen as the combination of the following linkages:
\\ $\mathcal{L}_1$: a robotic arm $\{a_1, c_1, e_1\}$ with one input $e_1$ and one fixed vertex $a_1$, one edge $\{a_1, c_1\}$ of length $ir$ and one edge $\{c_1, e_1\}$ of length $R$;
\\ $\mathcal{L}_2$: a robotic arm $\{a_2, b_2, e_2\}$ with one input $e_2$ and one fixed vertex $a_2$, one edge $\{a_2, b_2\}$ of length $ir$ and one edge $\{b_2, e_2\}$ of length $R$;
\\ $\mathcal{L}_3$: a rigidified square $\{b_3, d_3, c_3, e_3\}$ with inputs $b_3$, $c_3$ and four edges of length $R$;
\\ $\mathcal{L}_4$: a robotic arm $\{b_4, g_4, c_4\}$ with inputs $b_4, c_4$, one edge $\{b_4, g_4\}$ of length $l$ and one edge $\{g_4, c_4\}$ of length $il$.

\noindent
\begin{tabular}{cccc}

\begin{tikzpicture}[scale=0.9]
\node[fixed, label=below:$a_1$] (v5) at (-2,0) {};
\node[vertex, label=below:$c_1$] (v1) at ({sqrt(2)/2},{-sqrt(2)/2}) {};
\node[input, label=below:$e_1$] (v0) at ({sqrt(2)},0) {};
\draw (v0) to[bend left=20]node[edge] {$R$} (v1);
\draw (v5) to[bend right=20] node[edge] {$ir$} (v1);
\end{tikzpicture}
&
\begin{tikzpicture}[scale=0.9]
\node[fixed, label=below:$a_2$] (v5) at (-2,0) {};
\node[vertex, label=above:$b_2$] (v2) at ({sqrt(2)/2},{sqrt(2)/2}) {};
\node[input, label=above:$e_2$] (v0) at ({sqrt(2)},0) {};
\draw (v2) to[bend left=20]node[edge] {$R$}  (v0);
\draw (v2) to[bend right=20] node[edge] {$ir$} (v5);
\end{tikzpicture}
&
\begin{tikzpicture}[scale=0.9]
\node[output, label=left:$d_3$] (v3) at (0,0) {};
\node[input, label=below:$b_3$] (v2) at ({sqrt(2)/2},{sqrt(2)/2}) {};
\node[input, label=above:$c_3$] (v1) at ({sqrt(2)/2},{-sqrt(2)/2}) {};
\node[vertex, label=right:$e_3$] (v0) at ({sqrt(2)},0) {};
\draw (v3) to[bend left=20] node[edge] {$R$} (v2) to[bend left=20]node[edge] {$R$}  (v0) to[bend left=20]node[edge] {$R$} (v1) to[bend left=20] node[edge] {$R$} (v3);
\end{tikzpicture}
&
\begin{tikzpicture}[scale=0.9]
\node[input, label=below:$b_4$] (v2) at ({sqrt(2)/2},{sqrt(2)/2}) {};
\node[input, label=above:$c_4$] (v1) at ({sqrt(2)/2},{-sqrt(2)/2}) {};
\node[vertex, label=below:$g_4$] (v6) at (2.5,0.3) {};
\draw (v2) to[bend left=20] node[edge] {$l$} (v6) to[bend left=20] node[edge] {$il$} (v1);
\end{tikzpicture}
\\ $\mathcal{L}_1$ & $\mathcal{L}_2$ & $\mathcal{L}_3$ & $\mathcal{L}_4$
\end{tabular}

\bigskip

We combine the linkages in the following way (observe that the name of the vertices are chosen so that each $\beta_i$ preserves the letters and only changes indices):
\begin{enumerate}
\item Let $W_1 = \{c_1, e_1\}$. Let $\beta_1(c_1) = c_3$, $\beta_1(e_1) = e_3$. Let $\mathcal{L}_5 = \mathcal{L}_1 \cup_{\beta_1} \mathcal{L}_3$. The input set of $\mathcal{L}_5$ is $P_5 = \{e_3, b_3\}$.
\item Let $W_2 = \{b_2, e_2\}$. Let $\beta_2(b_2) = b_3$, $\beta_2(e_2) = e_3$. Let $\mathcal{L}_6 = \mathcal{L}_2 \cup_{\beta_2} \mathcal{L}_5$. The input set of $\mathcal{L}_6$ is $P_6 = \{e_3\}$.
\item Let $W_6 = \{b_3, c_3\}$. Let $\beta_6(b_3) = b_4$, $\beta_6(c_3) = c_4$. Let $\mathcal{L}_7 = \mathcal{L}_6 \cup_{\beta_6} \mathcal{L}_4$. The input set of $\mathcal{L}_7$ is $P_7 = \{e_3\}$.
\end{enumerate}

The linkage $\mathcal{L}_7$ is exactly the Peaucellier linkage.

Let $\psi \in \mathrm{Conf}_{\mathbb{M}}^{P_1}(\mathcal{L}_1)$ such that the intersection $\mathcal{H}(0, ir) \cap \mathcal{H}(\psi(e_1), R)$ has cardinality $2$. Facts~\ref{articArm} and~\ref{changeInputs} show that $\psi \in \mathrm{Reg}_{\mathbb{M}}^{P_1}(\mathcal{L}_1)$.

We may naturally identify $\mathrm{Conf}_{\mathbb{M}}^{\{e_3, b_4\}}(\mathcal{L}_7)$ with a subset $C$ of $\mathrm{Conf}_{\mathbb{M}}^{P_5}(\mathcal{L}_5)$ (identifying $b_4$ with $b_3$). Let us show, using Fact~\ref{propComb}, that $C$ is in fact a subset of $\mathrm{Reg}_{\mathbb{M}}^{P_5}(\mathcal{L}_5)$. Let $\psi \in C$, and let $\phi \in \mathrm{Conf}_{\mathbb{M}}^{V_7}(\mathcal{L}_7)$ such that $\phi(e_3) = \psi(e_3)$ and $\phi(b_4) = \psi(b_3)$. Let $\psi_1 \in \mathbb{M}^{\{e_1\}}$ defined by $\psi_1(e_1) = \psi(e_3)$: since $\phi(b_4) \neq \phi(c_3)$, the intersection $\mathcal{H}(0, ir) \cap \mathcal{H}(\psi_1(e_1), R)$ has cardinality at least $2$, but it is in fact exactly $2$ from Proposition~\ref{nbIntersection}. Therefore, $\psi_1 \in \mathrm{Reg}_{\mathbb{M}}^{P_1}(\mathcal{L}_1)$, so the first hypothesis of Fact~\ref{propComb} is satisfied. For the second hypothesis, we need to show that $\phi|_{P_3} \in \mathrm{Reg}_{\mathbb{M}}^{P_3}(\mathcal{L}_3)$. We know that $\phi(b_4) \neq \phi(c_4)$, and from Proposition~\ref{peaucFonct}, we also know that $\phi(e_3) \neq \phi(d_3)$. Therefore, Proposition~\ref{rigidSquare} tells us that $\phi|_{P_3} \in \mathrm{Reg}_{\mathbb{M}}^{P_3}(\mathcal{L}_3)$. The two hypotheses of Fact~\ref{propComb} are satisfied, so $C \subseteq \mathrm{Reg}_{\mathbb{M}}^{P_5}(\mathcal{L}_5)$.

In the same way, one may show that $\mathrm{Conf}_{\mathbb{M}}^{\{e_3\}}(\mathcal{L}_7) \subseteq \mathrm{Reg}_{\mathbb{M}}^{P_6}(\mathcal{L}_6)$, and finally, that $\mathrm{Conf}_{\mathbb{M}}^{\{e_3\}}(\mathcal{L}_7) \subseteq \mathrm{Reg}_{\mathbb{M}}^{P_7}(\mathcal{L}_7)$, so $\mathrm{Reg}_\mathbb{M}^P(\mathcal{L}_7) = \mathrm{Conf}_{\mathbb{M}}^P(\mathcal{L}_7)$.
\end{proof}

\begin{prop} \label{equivPeauc}
For all $\phi \in \mathrm{Conf}_{\mathbb{M}}(\mathcal{L})$ we have the equivalence
\[ \phi(d) \in \mathcal{H}\left(\begin{pmatrix}0\\-1\end{pmatrix}, i\right) \Longleftrightarrow y_{\phi(e)} - z_{\phi(e)} = - (R^2 + r^2). \]
\end{prop}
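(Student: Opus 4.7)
The plan is to reduce the equivalence to a direct algebraic computation in lightlike coordinates, relying on the functional identity already established in Proposition~\ref{peaucFonct}.

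First I would rewrite the hyperbola $\mathcal{H}\!\left(\begin{pmatrix}0\\-1\end{pmatrix}, i\right)$ as a simple equation in lightlike coordinates. A point $\alpha$ lies on this hyperbola iff $x_\alpha^2 - (t_\alpha+1)^2 = -1$, which simplifies to $x_\alpha^2 - t_\alpha^2 = 2 t_\alpha$. Since $y_\alpha z_\alpha = x_\alpha^2 - t_\alpha^2$ and $y_\alpha - z_\alpha = 2 t_\alpha$, the condition becomes the very clean identity
\[ \phi(d) \in \mathcal{H}\!\left(\begin{pmatrix}0\\-1\end{pmatrix}, i\right) \iff y_{\phi(d)} z_{\phi(d)} = y_{\phi(d)} - z_{\phi(d)}. \]

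Next, I would use Proposition~\ref{peaucFonct} to express $\phi(d)$ in terms of $\phi(e)$. Since $\phi(d) = -\frac{R^2+r^2}{\lVert \phi(e) \rVert^2}\phi(e)$ and $\lVert \phi(e) \rVert^2 = y_{\phi(e)} z_{\phi(e)}$, taking lightlike coordinates componentwise (which is legitimate because the map $\alpha\mapsto\lambda\alpha$ scales both $y$ and $z$ by $\lambda$) yields
\[ y_{\phi(d)} = -\frac{R^2+r^2}{z_{\phi(e)}}, \qquad z_{\phi(d)} = -\frac{R^2+r^2}{y_{\phi(e)}}. \]
Note that Proposition~\ref{peaucFonct} guarantees $\lVert \phi(e) \rVert \neq 0$, so $y_{\phi(e)}$ and $z_{\phi(e)}$ are both nonzero and the above fractions make sense.

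Finally, I would substitute these expressions into the hyperbola condition. The left-hand side becomes $y_{\phi(d)} z_{\phi(d)} = \frac{(R^2+r^2)^2}{y_{\phi(e)} z_{\phi(e)}}$, and the right-hand side becomes $y_{\phi(d)} - z_{\phi(d)} = (R^2+r^2)\,\frac{z_{\phi(e)} - y_{\phi(e)}}{y_{\phi(e)} z_{\phi(e)}}$. Clearing the common nonzero denominator and dividing by $R^2+r^2$ (which is strictly positive since $R,r > 0$) gives the equivalence $R^2+r^2 = z_{\phi(e)} - y_{\phi(e)}$, that is, $y_{\phi(e)} - z_{\phi(e)} = -(R^2+r^2)$, as desired. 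There is essentially no obstacle here: the whole proof is a substitution, and the main ``trick'' is recognizing that the defining equation of the hyperbola becomes strikingly symmetric in lightlike coordinates, which is exactly what makes the inversion formula of Proposition~\ref{peaucFonct} fit so well.
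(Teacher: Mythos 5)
Your proof is correct and follows essentially the same route as the paper: both express the hyperbola condition in lightlike coordinates, substitute $\phi(d) = -\frac{R^2+r^2}{\lVert\phi(e)\rVert^2}\phi(e)$ from Proposition~\ref{peaucFonct}, and simplify using $\lVert\phi(e)\rVert^2 = y_{\phi(e)}z_{\phi(e)} \neq 0$. The only difference is cosmetic — you first reduce the hyperbola equation to $y_{\phi(d)}z_{\phi(d)} = y_{\phi(d)} - z_{\phi(d)}$ before substituting, while the paper substitutes directly into $(y_{\phi(d)}+1)(z_{\phi(d)}-1) = -1$.
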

\begin{proof}
Let $\phi \in \mathrm{Conf}_{\mathbb{M}}(\mathcal{L})$. The following lines are equivalent:
\[ \phi(d) \in \mathcal{H}\left(\begin{pmatrix}0\\-1\end{pmatrix}, i\right) \]
\[ (y_{\phi(d)} + 1)(z_{\phi(d)} - 1) = -1 \]
\[ \left(- \frac{(R^2 + r^2)}{\lVert \phi(e) \rVert^2} y_{\phi(e)} + 1\right)\left(- \frac{(R^2 + r^2)}{\lVert \phi(e) \rVert^2} z_{\phi(e)} - 1\right) = -1 \]
\[ y_{\phi(e)} - z_{\phi(e)} = - (R^2 + r^2). \]
\end{proof}

\subsection{The partial \texorpdfstring{$t_0$}{t0}-line linkage} \label{partLineSect}
\begin{center}
\begin{tikzpicture}[scale=2]
\node[fixed, label=below:$a$] (v5) at (-2,0) {};
\node[vertex, label=left:$d$] (v3) at (0,0) {};
\node[vertex, label=above:$b$] (v2) at ({sqrt(2)/2},{sqrt(2)/2}) {};
\node[vertex, label=below:$c$] (v1) at ({sqrt(2)/2},{-sqrt(2)/2}) {};
\node[input, label=left:$e$] (v0) at ({sqrt(2)},0) {};
\node[vertex, label=right:$g$] (v6) at (2.5,0.3) {};
\node[fixed, label=right:$f$] (v7) at (0,-1) {};
\draw (v3) to[bend left=20] node[edge] {$R$} (v2) to[bend left=20]node[edge] {$R$}  (v0) to[bend left=20]node[edge] {$R$} (v1) to[bend left=20] node[edge] {$R$} (v3);
\draw (v2) to[bend right=20] node[edge] {$ir$} (v5) to[bend right=20] node[edge] {$ir$} (v1);
\draw (v2) to[bend left=20] node[edge] {$l$} (v6) to[bend left=20] node[edge] {$il$} (v1);
\draw (v3) to[bend right=20] node[edge] {$i$} (v7);
\draw (v7) to[bend left=20] node[edge] {$i$} (v5);
\end{tikzpicture}
\end{center}

$R = r = \frac{1}{\sqrt{2}}; l > 0; F = \{a, f\}; \phi_0(a) = \begin{pmatrix}5 \\ t_0 + 1/2 \end{pmatrix}, \phi_0(f) = \begin{pmatrix}5 \\ t_0 - 1/2 \end{pmatrix}; P = \{e\}$.

\begin{prop} The workspace of $e$, $\mathrm{Conf}_{\mathbb{M}}^P(\mathcal{L})$, is contained in the line $t = t_0$, but does not necessarily contain the whole line. More precisely
\[ \left\{\begin{pmatrix}x \\ t \end{pmatrix} \in \mathbb{M} ~ \middle\arrowvert ~ t = t_0, \lvert x - 5 \rvert > 1/2 \right\} \subseteq \mathrm{Reg}_\mathbb{M}^P(\mathcal{L}) = \mathrm{Conf}_{\mathbb{M}}^{P}(\mathcal{L}) \subseteq \left\{\begin{pmatrix}x \\ t \end{pmatrix} \in \mathbb{M} ~ \middle\arrowvert ~ t = t_0 \right\}. \]
\end{prop}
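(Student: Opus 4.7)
The plan is to treat $\mathcal{L}$ as the Peaucellier inversor $\mathcal{L}_P$ of Section~\ref{peauc} (with $R = r = 1/\sqrt{2}$, so $R^2 + r^2 = 1$, and with the fixed vertex $a$ placed at $\phi_0(a) = \begin{pmatrix}5 \\ t_0+1/2\end{pmatrix}$ instead of the origin) augmented by the fixed vertex $f$ at $\phi_0(f) = \begin{pmatrix}5 \\ t_0-1/2\end{pmatrix}$ and the single timelike edge $(d,f)$ of length $i$. The crucial remark is that $\phi_0(f) = \phi_0(a) + \begin{pmatrix}0 \\ -1\end{pmatrix}$, so the constraint $\phi(d) \in \mathcal{H}(\phi_0(f), i)$ is exactly the translate by $\phi_0(a)$ of the hypothesis $\phi(d) - \phi_0(a) \in \mathcal{H}\left(\begin{pmatrix}0\\-1\end{pmatrix}, i\right)$ appearing in Proposition~\ref{equivPeauc}. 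Since the Minkowski metric is translation invariant, all results of Section~\ref{peauc} apply verbatim after translation by $\phi_0(a)$.

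For the inclusion $\mathrm{Conf}_{\mathbb{M}}^P(\mathcal{L}) \subseteq \{t = t_0\}$: apply the translated Proposition~\ref{equivPeauc} to any realization $\phi$. The edge $(d,f)$ forces the left-hand side of the equivalence to hold, hence $y_{\phi(e) - \phi_0(a)} - z_{\phi(e) - \phi_0(a)} = -(R^2+r^2) = -1$. Using the lightlike coordinates of Section~\ref{notations}, where $y - z = 2t$, this yields $2(t_{\phi(e)} - t_0 - 1/2) = -1$, i.e.\ $t_{\phi(e)} = t_0$.

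For the inclusion $\left\{\begin{pmatrix}x\\t_0\end{pmatrix} : \lvert x-5\rvert > 1/2\right\} \subseteq \mathrm{Conf}_\mathbb{M}^P(\mathcal{L})$: for such a point, the vector $\begin{pmatrix}x-5 \\ -1/2\end{pmatrix}$ has squared pseudo-norm $(x-5)^2 - 1/4 > 0$, so it lies in the spacelike cone. By the translated Fact~\ref{confPeauc}, there is a realization of the Peaucellier part with $\phi(e) = \begin{pmatrix}x\\t_0\end{pmatrix}$. Reading Proposition~\ref{equivPeauc} from right to left, we see that $\phi(d) \in \mathcal{H}(\phi_0(f), i)$, so placing $f$ at $\phi_0(f)$ extends $\phi$ to a realization of $\mathcal L$.

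For regularity, I view $\mathcal{L}$ as a combination $\mathcal{L}_P \cup_\beta \mathcal{L}_1$, where $\mathcal{L}_1$ has vertices $\{d_1, f_1\}$, a single edge of length $i$, $f_1$ fixed at $\phi_0(f)$, input $P_1 = \{d_1\}$, and $\beta(d) = d_1$; by the formula for $P_3$ in Definition~\ref{comblinkages}, this gives a linkage with input set $\{e\}$. To show $\psi \in \mathrm{Reg}_\mathbb{M}^P(\mathcal{L})$ for $\psi(e) = \begin{pmatrix}x\\t_0\end{pmatrix}$ with $\lvert x-5 \rvert > 1/2$, apply Fact~\ref{propComb}: the first hypothesis reduces to $\psi \in \mathrm{Reg}_\mathbb{M}^P(\mathcal{L}_P)$, which is guaranteed by Proposition~\ref{regPeauc} together with the spacelike cone computation just made; the second hypothesis is trivial because $d_1$ is the only non-fixed vertex of $\mathcal{L}_1$, making the relevant restriction map a local diffeomorphism onto the hyperbola. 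The only technical point that requires care is the systematic use of the translated Peaucellier statements, which I expect to be the main (minor) obstacle: I will indicate at the start of the proof that all results of Section~\ref{peauc} transfer by applying the translation $\alpha \mapsto \alpha - \phi_0(a)$, and then quote them freely.
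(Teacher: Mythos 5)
Your route is the same as the paper's (the paper's proof is literally ``apply Fact~\ref{confPeauc}, Propositions~\ref{regPeauc} and~\ref{equivPeauc}''), and your two inclusions are carried out correctly: the translation by $\phi_0(a)$ is legitimate, the computation $y-z=2t$ giving $t_{\phi(e)}=t_0$ is right, and the spacelike-cone argument via Fact~\ref{confPeauc} plus Proposition~\ref{equivPeauc} read backwards does produce the required realizations. (The edge $(f,a)$ of length $i$ that you omitted joins two fixed vertices at distance $i$, so it imposes no constraint; worth one sentence.)

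There is, however, a gap in the regularity step: the statement asserts the \emph{equality} $\mathrm{Reg}_\mathbb{M}^P(\mathcal{L}) = \mathrm{Conf}_{\mathbb{M}}^{P}(\mathcal{L})$, i.e.\ that \emph{every} input configuration is regular, whereas you only verify regularity at points with $\lvert x-5\rvert > 1/2$. The workspace is strictly larger than that set: for $\lvert x-5\rvert < 1/2$ the vector $\phi(e)-\phi_0(a)$ is timelike but non-lightlike, and one can still build a full realization (case (1) of Proposition~\ref{intersectionOfHyperbolae} gives the two points $\phi(b),\phi(c)$, the rigidified square forces the parallelogram completion for $\phi(d)$, and $\phi(g)$ exists again by case (1)); Proposition~\ref{equivPeauc} then shows the edge $(d,f)$ is satisfied, so such points lie in $\mathrm{Conf}_{\mathbb{M}}^{P}(\mathcal{L})$. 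Your argument as written says nothing about them, so the claimed equality is not established. The fix is immediate within your own framework: run the Fact~\ref{propComb} argument for an \emph{arbitrary} $\psi \in \mathrm{Conf}_{\mathbb{M}}^{P}(\mathcal{L})$. Hypothesis (1) does not need the spacelike-cone computation at all: since $P_1 \cap W_1 = \emptyset$, it only requires $\psi(e)$ to lie in $\mathrm{Reg}_{\mathbb{M}}^{P_1}$ of the (translated) Peaucellier linkage, and $\psi(e)$ is automatically in its workspace (any realization of $\mathcal{L}$ restricts to one of the Peaucellier sub-linkage), so Proposition~\ref{regPeauc} alone gives it; hypothesis (2) is trivial as you noted. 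With that modification your proof yields $\mathrm{Reg}_\mathbb{M}^P(\mathcal{L}) = \mathrm{Conf}_{\mathbb{M}}^{P}(\mathcal{L})$ in full, matching the proposition.
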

\begin{proof}

We apply Fact~\ref{confPeauc}, Propositions~\ref{regPeauc} and~\ref{equivPeauc}.
\end{proof}

For example, this linkage with the choice $t_0 = 0$ will be called the partial $(t_0 = 0)$-line linkage.

The dual of this linkage (see Section~\ref{sectDualLinkage}) is called the partial \emph{$x_0$-line linkage}.

\subsection{The \texorpdfstring{$t_0$}{t0}-integer linkage} \label{intLinkSect}

This linkage contains four vertices $a$, $b$, $c$, $d$ which are restricted to move on $\mathcal{I}$ (the $x$-axis) using a partial $t_0$-line linkages. More precisely, the $t_0$-integer linkage is obtained as the combination of the linkage on the figure below with is combined with four partial $t_0$-line linkages $\mathcal{L}_i, i = 1 \dots 4$, to form the $t_0$-integer linkage. The combination mappings $\beta_i$ send $a$, $b$, $c$ and $d$ respectively to the inputs $e_i$ of the linkages $\mathcal L_i$.

\begin{center}
\begin{tikzpicture}[scale=2]
\node[fixed, label=below:$a$] (v1) at (-3.5, 0) {};
\node[vertex, label=below:$b$] (v2) at (-2.5, 0) {};
\node[vertex, label=below:$c$] (v3) at (-0.5, 0) {};
\node[vertex, label=below:$d$] (v4) at (3.5, 0) {};
\draw (v1) to node[edge] {$0.5$} (v2) to node[edge] {$1$} (v3) to node[edge] {$2$} (v4);
\end{tikzpicture}
\end{center}

Take $F = \{a\}; \phi_0(a) = \begin{pmatrix}0.5\\t_0\end{pmatrix}; P = \emptyset$.

We have \[ \mathrm{Conf}_{\mathbb{M}}^{\{d\}}(\mathcal{L}) = \left\{\begin{pmatrix}-3\\t_0\end{pmatrix}, \begin{pmatrix}-2\\t_0\end{pmatrix}, \dots, \begin{pmatrix}3\\t_0\end{pmatrix}, \begin{pmatrix}4\\t_0\end{pmatrix}\right\}. \]

Moreover, $\mathrm{Conf}_{\mathbb{M}}(\mathcal{L})$ is a finite set so $\mathrm{Reg}_\mathbb{M}^P(\mathcal{L}) = \mathrm{Conf}_{\mathbb{M}}^P(\mathcal{L})$.

We will use this linkage twice to construct more complex linkages. In Section~\ref{sectLineLink}, we could have used a simpler linkage with a configuration space of cardinality $2$ instead of $8$, but we need it to have cardinality at least $7$ in Section~\ref{sectSquareLink}.

\subsection{The \texorpdfstring{$t_0$}{t0}-line linkage} \label{sectLineLink}
This linkage traces out the whole horizontal line $t = t_0$: it contains a vertex $e$, the input, such that \[\mathrm{Conf}_{\mathbb{M}}^{\{e\}}(\mathcal{L}) = \left\{ \alpha \in \mathbb{M} ~ \middle\arrowvert ~ t_\alpha = t_0 \right \}. \]

To construct it, the idea is to combine a partial $t_0$-line linkage with a $t_0$-integer linkage, as follows (to simplify the notations, we only give the construction of the $(t_0 = 0)$-line linkage). Let:

\begin{itemize}
\item $\mathcal{L}_1$ be a ($t_0 = \frac{1}{2}$)-integer linkage;
\item $\mathcal{L}_2$ a ($t_0 = -\frac{1}{2}$)-integer linkage;
\item $\mathcal{L}_3$ the combination (disjoint union) of the two linkages $\mathcal{L}_1$ and $\mathcal{L}_2$;
\item $\mathcal{L}_4$ a linkage similar to a partial $t_0$-line linkage, with the only difference that $F_4 = \emptyset$ instead of $F_4 = \{a_4, f_4\}$;
\item $W_3 = \{d_1, d_2\}$ and $\beta(d_1) = a_4$, $\beta(d_2) = f_4$;
\item $\mathcal{L}_5 = \mathcal{L}_3 \cup_\beta \mathcal{L}_4$. Since for any $x \in \mathbb R$, we have either $\abs{x-5} > 1/2$ or $\abs{x-7} > 1/2$, we obtain as desired
\[\mathrm{Conf}_{\mathbb{M}}^{\{e_5\}}(\mathcal{L}_5) = \left\{ \alpha \in \mathbb{M} ~ \middle\arrowvert ~ t_\alpha = t_0 \right \}. \]
\end{itemize}

Using Fact~\ref{propComb}, we also obtain $\mathrm{Reg}_\mathbb{M}^P(\mathcal{L}) = \mathrm{Conf}_{\mathbb{M}}^P(\mathcal{L})$.

For future reference, we let $a:= a_1$, $f:= f_1$, $e:= e_4$.

The dual of this linkage is called the \emph{$x_0$-line linkage}.

\subsection{The horizontal parallelizer} \label{horizParSect}
This linkage has the input set $P = \{e_3, e_4\}$. It satisfies
\[ \mathrm{Reg}_\mathbb{M}^P(\mathcal{L}) = \mathrm{Conf}_{\mathbb{M}}^{P}(\mathcal{L}) = \left\{ \psi \in \mathbb{M}^{\{e_3, e_4\}} ~ \middle\arrowvert ~ t_{\psi(e_3)} = t_{\psi(e_4)} \right\}. \]

Let: 
\begin{itemize}
\item $\mathcal{L}_1$ and $\mathcal{L}_2$ be two ($x_0 = 0$)-line linkages;
\item $\mathcal{L}_3$, $\mathcal{L}_4$ two linkages similar to ($t_0 = 0$)-line linkages, but with $F_3, F_4 = \emptyset$;
\item $\mathcal{L}_5$ the combination of $\mathcal{L}_1$ and $\mathcal{L}_2$;
\item $W_3 = \{a_3, f_3\}, \beta(a_3) = e_1, \beta(f_3) = e_2$, and $\mathcal{L}_6 = \mathcal{L}_3 \cup_\beta \mathcal{L}_5$;
\item $W_4 = \{a_4, f_4\}, \beta(a_4) = e_1, \beta(f_4) = e_2$, and $\mathcal{L}_7 = \mathcal{L}_4 \cup_\beta \mathcal{L}_6$.
\end{itemize}

$\mathcal{L}_7$ is the desired linkage.

For future reference, we let $a:= e_3$ and $b:= e_4$.

The dual of this linkage is called the \emph{vertical parallelizer}.

\subsection{The diagonal parallelizer} \label{diagParSect}

\begin{center}
\begin{tikzpicture}[scale=1.5]
\node[input, label=below:$a$] (a) at (-1,1) {};
\node[input, label=below:$b$] (b) at (-1,2) {};
\node[vertex, label=below:$c$] (c) at (2,0) {};
\node[vertex, label=below:$d$] (d) at (0,0) {};

\node[vertex, label=below:$e$] (e) at (-2,0) {};
\node[fixed, label=above:$f$] (f) at (1,1) {};
\node[fixed, label=above:$g$] (g) at (1,2) {};
\draw (e) to[bend left] node[edge] {$0$} (b) to[bend left] node[edge] {$0$} (d) to[bend left] node[edge] {$0$} (g) to[bend left] node[edge] {$0$} (c) to[bend left] node[edge] {$\sqrt{2}$} (e) to[bend left] node[edge] {$0$} (a) to[bend left] node[edge] {$0$} (d) to[bend left] node[edge] {$0$} (f) to[bend left] node[edge] {$0$} (c) to[bend right] node[edge] {$0$} (d) to[bend right] node[edge] {$0$} (e);
\end{tikzpicture}
\end{center}

$P = \{a, b\}, F = \{g, f\}, \phi_0(f) = \begin{pmatrix}1\\1\end{pmatrix}, \phi_0(g) = \begin{pmatrix}0\\0\end{pmatrix}$.

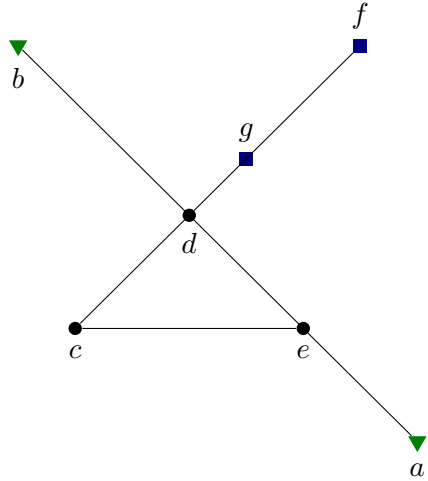
\begin{figure}[h!]
\begin{center}
\begin{tikzpicture}[scale=1.5]
\node[input, label=above:$a$] (a) at (1.5,-2.5) {};
\node[input, label=above:$b$] (b) at (-2,1) {};
\node[vertex, label=below:$e$] (c) at (0.5,-1.5) {};
\node[vertex, label=below:$d$] (d) at (-0.5,-0.5) {};
\node[vertex, label=below:$c$] (e) at (-1.5,-1.5) {};
\node[fixed, label=above:$f$] (f) at (1,1) {};
\node[fixed, label=above:$g$] (g) at (0,0) {};
\draw (f) -- (e) -- (c);
\draw (a) -- (b);
\end{tikzpicture}
\end{center}
\caption{One realization of the diagonal parallelizer.}
\end{figure}

In this section, we use the lightlike coordinates $y$ and $z$ (see Section~\ref{notations}).

\begin{prop} We have
\[ \mathrm{Reg}_\mathbb{M}^P(\mathcal{L}) = \mathrm{Conf}_{\mathbb{M}}^P(\mathcal{L}) = \left\{ \psi \in \mathbb{M}^P ~ \middle\arrowvert ~ y_{\psi(a)} = y_{\psi(b)} \right\}. \]
\end{prop}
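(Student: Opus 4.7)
The plan is to introduce lightlike coordinates $(y, z) := (x+t, x-t)$, in which the analysis becomes transparent. Under this change $\lVert \alpha \rVert^2 = y_\alpha z_\alpha$, the fixed vertices become $\phi_0(g) = (0, 0)$ and $\phi_0(f) = (2, 0)$, and an edge of length $r$ between $\alpha$ and $\beta$ reduces to the single equation $(y_\alpha - y_\beta)(z_\alpha - z_\beta) = r^2$. Crucially, a length-zero edge is then equivalent to the disjunction $y_\alpha = y_\beta$ or $z_\alpha = z_\beta$.

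For the necessary direction $\mathrm{Conf}_{\mathbb{M}}^P(\mathcal L) \subseteq \{y_a = y_b\}$, I would first combine the two length-zero edges $dg$ and $df$: these give $y_d z_d = 0$ and $(y_d - 2) z_d = 0$, whose subtraction yields $z_d = 0$; symmetrically, $z_c = 0$ follows from $gc$ and $fc$. The only non-isotropic edge $ce$ then reads $-(y_c - y_e)\, z_e = 2$, forcing $z_e \neq 0$, and combining this with the length-zero edge $de$ (which becomes $(y_d - y_e)(-z_e) = 0$) gives $y_d = y_e$. Next I would examine the triangle $\{a, d, e\}$: the edge $ad$ imposes $y_a = y_d$ or $z_a = 0$, while $ea$ imposes $y_a = y_e$ or $z_a = z_e$. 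Since $y_d = y_e$ and $z_e \neq 0$, the assumption $y_a \neq y_e$ would give $z_a = 0$ and $z_a = z_e$, hence $z_e = 0$, a contradiction. Thus $y_a = y_e$, and symmetrically $y_b = y_e$ via $eb$ and $bd$, proving $y_{\phi(a)} = y_{\phi(b)}$.

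For the converse inclusion, given any $\psi$ with $y_{\psi(a)} = y_{\psi(b)} =: y_0$, I would construct an explicit realization by picking any $z_e \neq 0$ and setting $\phi(d) = (y_0, 0)$, $\phi(e) = (y_0, z_e)$, $\phi(c) = (y_0 - 2/z_e, 0)$; a direct verification confirms that all eleven edge equations hold. The equality $\mathrm{Reg}_{\mathbb{M}}^P(\mathcal L) = \mathrm{Conf}_{\mathbb{M}}^P(\mathcal L)$ would then be checked vertex by vertex, using the Fact from Section on regularity that $\mathrm{Reg}^P(\mathcal L, W_1) \cap \mathrm{Reg}^P(\mathcal L, W_2) \subseteq \mathrm{Reg}^P(\mathcal L, W_1 \cup W_2)$; the inputs $a, b$ and the uniquely determined vertex $d = (y_0, 0)$ are immediate, and for $c, e$ one exhibits smooth local sections in a neighborhood of each $\psi$.

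The main obstacle is the combinatorial blow-up produced by the eleven disjunctive length-zero constraints, each of which a priori splits into two linear branches. The decisive simplification is precisely the passage to lightlike coordinates, which collapses every such disjunction into a single product equation and reduces the whole argument to a short chain of implications rather than an exhaustive case enumeration; the sequence $z_d = z_c = 0 \Rightarrow z_e \neq 0 \Rightarrow y_e = y_d \Rightarrow y_a = y_e = y_b$ then runs with essentially no branching.
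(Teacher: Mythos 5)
Your computation of $\mathrm{Conf}_{\mathbb{M}}^P(\mathcal{L})$ is correct and is essentially the paper's argument: the paper also works in lightlike coordinates, expressing your product-equation manipulations as the geometric fact that for $\alpha_1 \neq \alpha_2$ with $y_{\alpha_1} = y_{\alpha_2}$ (resp. $z_{\alpha_1} = z_{\alpha_2}$) the intersection $\mathcal{H}(\alpha_1,0) \cap \mathcal{H}(\alpha_2,0)$ is the line $\{y = y_{\alpha_1}\}$ (resp. $\{z = z_{\alpha_1}\}$), and it runs the same chain $z_{\phi(d)} = z_{\phi(c)} = 0$, then $y_{\phi(d)} = y_{\phi(e)}$ with $\phi(d) \neq \phi(e)$, then $y_{\phi(a)} = y_{\phi(d)} = y_{\phi(b)}$. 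Your explicit realization for the converse inclusion is also fine; it even satisfies the $\sqrt{2}$-edge exactly, whereas the paper's displayed choice of $\phi(e)$ and $\phi(c)$ makes $\delta(\phi(c),\phi(e)) = 2$, a small slip.

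The weak point is the last step. By Definition~\ref{defR}, $\psi \in \mathrm{Reg}_{\mathbb{M}}^P(\mathcal{L})$ requires the restriction map to be a \emph{finite} smooth covering over a neighborhood of $\psi$, and exhibiting a smooth local section for $c$ and $e$ does not give that. In fact your own converse construction already shows the fibre over \emph{every} $\psi$ is infinite: for any $z_e \neq 0$, the assignment $d = (y_0,0)$, $e = (y_0,z_e)$, $c = (y_0 - 2/z_e, 0)$ is a realization extending $\psi$, so $\pi_W$ has a one-parameter fibre for every $W$ containing $e$ or $c$ (in particular for $W = V$), and no neighborhood can make it a finite covering. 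The vertex-by-vertex strategy therefore works for $d$, which is indeed uniquely and smoothly determined, but cannot work for $c$ and $e$ with the linkage as drawn. To be fair, the paper's own justification of this equality is the single sentence that the coordinates of all vertices vary smoothly with the coordinates of $a$ and $b$, which has exactly the same defect; still, as written, your proposed argument for $\mathrm{Reg}_{\mathbb{M}}^P(\mathcal{L}) = \mathrm{Conf}_{\mathbb{M}}^P(\mathcal{L})$ would not go through under the stated definition of regular inputs --- one would need either to pin down $e$ (say, by an additional edge of suitable length to a fixed vertex, with $c$ then determined by the inversion-type relation) or to restrict the regularity claim to the vertices that are actually finitely determined.
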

\begin{proof}
The point is that for $\alpha_1, \alpha_2 \in \mathbb{M}$ such that $y_{\alpha_1} = y_{\alpha_2}$ and $\alpha_1 \neq \alpha_2$, the intersection $\mathcal{H}(\alpha_1, 0) \cap \mathcal{H}(\alpha_2, 0)$ is a straight line, more precisely:
\[ \mathcal{H}(\alpha_1, 0) \cap \mathcal{H}(\alpha_2, 0) = \left\{ \gamma ~ \middle\arrowvert ~ y_\gamma = y_{\alpha_1} \right\}. \]

First, let us prove the inclusion $\mathrm{Conf}_{\mathbb{M}}^P(\mathcal{L}) \subseteq \left\{ \psi \in \mathbb{M}^P ~ \middle\arrowvert ~ y_{\psi(a)} = y_{\psi(b)} \right\}$.

For all $\phi \in \mathrm{Conf}_{\mathbb{M}}(\mathcal{L})$, $\phi(c) \in \mathcal{H}(\phi(g), 0) \cap \mathcal{H}(\phi(f), 0)$ and $z_{\phi(f)}=z_{\phi(g)}$, so $z_{\phi(c)} = 0$. Likewise, $z_{\phi(d)} = 0$.

Since $\phi(e) \in \mathcal{H}(\phi(d), 0)$ and $\phi(e) \not\in \mathcal{H}(\phi(c), 0)$, we have $y_{\phi(d)} = y_{\phi(e)}$ and $\phi(d) \neq \phi(e)$.

Therefore, since $\phi(a) \in \mathcal{H}(\phi(d), 0) \cap \mathcal{H}(\phi(e), 0)$, we have $y_{\phi(a)} = y_{\phi(d)}$. Likewise, $y_{\phi(b)} = y_{\phi(d)}$ and finally, $y_{\phi(a)} = y_{\phi(b)}$.

Now, let us prove the inclusion $\mathrm{Conf}_{\mathbb{M}}^P(\mathcal{L}) \supseteq \left\{ \psi \in \mathbb{M}^P ~ \middle\arrowvert ~ y_{\psi(a)} = y_{\psi(b)} \right\}$. Let $\psi \in \mathbb{M}^{\{a, b\}}$ such that $y_{\psi(a)} = y_{\psi(b)}$. Construct $\phi \in \mathrm{Conf}_{\mathbb{M}}(\mathcal{L})$ such that $\phi|_{\{a, b\}} = \psi$. Let $\phi(d) \in \mathbb{M}$ such that $z_{\phi(d)} = 0$ and $y_{\phi(d)} = y_{\psi(a)}$. Let $\phi(e) = \phi(d) + \begin{pmatrix}1\\-1\end{pmatrix}$ and $\phi(c) = \phi(d) + \begin{pmatrix}-1\\-1\end{pmatrix}$ (in $(x, t)$ coordinates). Then $\phi \in \mathrm{Conf}_{\mathbb{M}}(\mathcal{L})$.

Finally, the coordinates of all vertices vary smoothly with respect to the coordinates of $a$ and $b$, so $\mathrm{Reg}_\mathbb{M}^P(\mathcal{L}) = \mathrm{Conf}_{\mathbb{M}}^P(\mathcal{L}).$
\end{proof}

\section{Elementary linkages for algebraic operations}

\subsection{The average function linkage} \label{symSect}
The average function linkage is a linkage with the input set $P = \{a, b\}$ and the output set $Q = \{c\}$ which is a functional linkage for the function
\[ \begin{aligned} f: \mathcal{I}^2 & \to \mathcal{I} \\ (x_1, x_2) & \longmapsto \frac{x_1+x_2}{2}, \end{aligned} \]
and such that $\mathrm{Reg}_\mathbb{M}^P(\mathcal{L}) = \mathrm{Conf}_{\mathbb{M}}^P(\mathcal{L}) = \mathcal{I}^P$.

Recall that by $\mathcal I$, we mean the $x$-axis, and by ``$\mathcal{L}$ is a functional linkage for $f$'', we mean that for all $\psi \in \mathrm{Conf}_{\mathbb{M}}^P(\mathcal{L})$
\[ x_{\psi(c)} = \frac{x_{\psi(a)}+x_{\psi(b)}}{2}. \]

\begin{center}
\begin{tikzpicture}[scale=1.5]
\node[input, label=right:$a$] (a) at (-1,0) {};
\node[input, label=left:$b$] (b) at (1,0) {};
\node[output, label=below:$c$] (c) at (0,0) {};
\node[vertex, label=below:$d$] (d) at (0,1) {};
\node[vertex, label=below:$e$] (e) at (0,-1) {};
\draw (a) to[bend left=20] node[edge] {$i$} (d) to[bend left=20] node[edge] {$i$} (b) to[bend left=20] node[edge] {$i$} (e) to[bend left=20] node[edge] {$i$} (a);
\end{tikzpicture}
\end{center}

The vertices $a$, $b$, $c$ are restricted to move on the line $\mathcal{I}$ using ($t_0 = 0$)-line linkages: this means that the linkage in the figure above is combined with three ($t_0 = 0$)-line linkages. Likewise, the points $e$, $d$ and $c$ are restricted to have the same $x$ coordinate using a vertical parallelizer. The square $adbe$ is rigidified (thus, the actual average function linkage has much more than these $5$ vertices, but many of them are not represented on the figure).

To see that this linkage is the desired functional linkage, first notice that $\phi(c)$ is the middle of the segment $[\phi(a), \phi(b)]$ for all realization $\phi$, because $\phi(a) \phi(d) \phi(b) \phi(e)$ is a parallelogram. Moreover, the expression \[ t_{\phi(d)} = \pm \sqrt{1 + \frac{\abs{x_{\phi(b)} - x_{\phi(a)}}^2}{2}} \] shows that the coordinates of $\phi(d)$ (and similarly, $\phi(e)$) depend on $\phi(a)$ and $\phi(b)$ in a differentiable way, so $\mathrm{Reg}_\mathbb{M}^P(\mathcal{L}) = \mathcal{I}^P$.

\subsection{The adder} \label{adder}
The adder is a linkage with the input set $P = \{a_1, b_1\}$ and the output set $Q = \{b_2\}$ which is a functional linkage for the function
\[ \begin{aligned} f: \mathcal{I}^2 & \to \mathcal{I} \\ (x_1, x_2) & \longmapsto x_1+x_2, \end{aligned} \]
with $\mathrm{Reg}_\mathbb{M}^P(\mathcal{L}) = \mathrm{Conf}_{\mathbb{M}}^P(\mathcal{L}) = \mathcal{I}^P$.

It is constructed as $\mathcal{L}_1 \cup_\beta \mathcal{L}_2$, where $\mathcal L_1$ and $\mathcal L_2$ are average function linkages, with $W_1 = \{c_1\}$, $\beta(c_1) = c_2$, $F_2 = \{a_2\}$, and $\phi_{02}(a_2) = \begin{pmatrix}0\\0\end{pmatrix}$.

Note that we may obtain a functional linkage for substraction by letting $P = \{b_2, b_1\}$ and $Q = \{a_1\}$. One may also construct (by induction on $n$) a functional linkage for $x \mapsto nx$, where $n$ is any integer, and (by switching the input and the output) a functional linkage for $x \mapsto \frac{1}{n}x$.

\subsection{The square function linkage} \label{sectSquareLink}
The square function linkage is a linkage with the input set $P = \{a\}$ and the output set $Q = \{b\}$: it is functional for the function
\[ \begin{aligned} \mathcal{I} & \to \mathcal{I} \\ x & \longmapsto x^2, \end{aligned} \]
with $\mathrm{Reg}_\mathbb{M}^P(\mathcal{L}) = \mathrm{Conf}_{\mathbb{M}}^P(\mathcal{L}) = \mathcal{I}^P$.

To construct it, recall the algebraic trick described by Kapovich and Millson in~\cite{mk}:
\[ \forall x \in \mathbb{R} \setminus \{-0.5, 0.5\} ~~~ x^2 = 0.25 + \frac{1}{\frac{1}{x - 0.5} - \frac{1}{x + 0.5}}. \]

We have to find another trick to obtain a formula which works for every $x \in \mathbb{R}$.

To do this, notice that for all $x$ and $x'$ in $\mathbb{R}$ we have the identity
\[ x^2 = 2(x+x')^2 + 2(x')^2 - (x + 2x')^2. \]
Thus the expression $x^2$ can be rewritten
\footnotesize \begin{equation} \label{eqSquare} 2\left(0.25 + \frac{1}{\frac{1}{x+x' - 0.5} - \frac{1}{x+x' + 0.5}}\right) + 2\left(0.25 + \frac{1}{\frac{1}{x' - 0.5} - \frac{1}{x' + 0.5}}\right) - \left(0.25 + \frac{1}{\frac{1}{x + 2x' - 0.5} - \frac{1}{x + 2x' + 0.5}}\right). \end{equation}

\normalsize Moreover, for all $x \in \mathbb{R}$ there exists an $x' \in \{-3, -2, \dots, 3, 4\}$ such that \[\{x + x', x + 2x', x'\} \cap \{-0.5, 0.5\} = \emptyset.\]

Start with a ($t_0 = 0$)-integer linkage $\mathcal{L}_1$: think of the vertex $d_1$ as the number $x'$. Let $\mathcal{L}_2$ be the linkage $\mathcal{L}_1$ to which one adds new fixed vertices at $\begin{pmatrix}0.5 \\ 0\end{pmatrix}$ and $\begin{pmatrix}0.25 \\ 0\end{pmatrix}$, and a new mobile vertex which will represent $x$ and will be the input of the linkage (one does not add any new edge for now). Since Expression~\ref{eqSquare} is the composition of additions, subtractions and inversions, one may combine $\mathcal{L}_2$ with linkages for addition, subtraction and inversion (for the inversion, use the Peaucellier inversor), in the spirit of Fact~\ref{propCombFunc}, so that the output of the new linkage $\mathcal{L}$ corresponds to Expression~\ref{eqSquare}. This is the desired linkage.

\subsection{The multiplier} \label{multiplier}
The multiplier is a linkage with the input set $P = \{a, b\}$ and the output set $Q = \{c\}$ which is a functional linkage for the function
\[ \begin{aligned} f: \mathcal{I}^2 & \to \mathcal{I} \\ (x_1, x_2) & \longmapsto x_1x_2, \end{aligned} \]
such that $\mathrm{Reg}_\mathbb{M}^P(\mathcal{L}) = \mathrm{Conf}_{\mathbb{M}}^P(\mathcal{L}) = \mathcal{I}^P$.

Simply construct the multiplier by combining square function linkages and adders, using the identity
\[ \forall x_1, x_2 \in \mathbb{R} ~~~ x_1x_2 = \frac{1}{4}\left((x_1+x_2)^2 - (x_1-x_2)^2\right). \]

\subsection{The polynomial linkage} \label{polylink}
Let $f: \mathbb{R}^n \to \mathbb{R}^m$ be a polynomial. We identify $\mathbb{R}$ with $\mathcal{I}$.

The polynomial linkage is a functional linkage for the function $f$ with $\mathrm{card}(P)=n$ and
\[ \mathrm{Reg}_\mathbb{M}^P(\mathcal{L}) = \mathrm{Conf}_{\mathbb{M}}^P(\mathcal{L}) = \mathcal{I}^P. \]

The polynomial linkage is obtained by combining adders and multipliers (use Fact~\ref{propCombFunc}). The coefficients are represented by fixed vertices.

\smallskip \paragraph{\bf Example} To illustrate the general case, we give the following example: $n = 2$, $m = 1$, $f(x, y) = 2 x^3 y + \pi$.

To construct a functional linkage for $f$, start with a linkage $\mathcal{L}$ consisting of two fixed vertices $a, b$ with $\phi_0(a) = \begin{pmatrix}2\\0\end{pmatrix}$ and $\phi_0(b) = \begin{pmatrix}\pi\\0\end{pmatrix}$, but also two vertices $c, d$ which are the inputs and correspond respectively to the variables $x$ and $y$.

Combine this linkage with a multiplier: the combination mapping $\beta$ sends $c$ to one of the inputs of the multiplier and $d$ to the other one. The linkage (still called $\mathcal{L}$) is now functional for $(x, y) \longmapsto xy$.

Combine the new linkage with another multiplier: the combination mapping $\beta$ sends $c$ to one of the inputs and the output of $\mathcal{L}$ to the other one. The new linkage $\mathcal{L}$ is functional for $(x, y) \longmapsto x^2y$.

Repeating this process once, we obtain a functional linkage for $x^3y$, and then for $2 x^3 y$ (using the vertex $a$).

Finally, combine the linkage $\mathcal{L}$ with an adder: the combination mapping $\beta$ sends the output of $\mathcal{L}$ to one of the inputs, and $b$ to the other one.

\section{End of the proof of Theorem~\refkempemink}

Let $n \in \mathbb{N}$. We are given $A$ a semi-algebraic subset of $(\mathbb{R}^2)^n$, but we first assume that $A$ is in fact an \emph{algebraic} subset of $(\mathbb{R}^2)^n$, defined by a polynomial $f: \mathbb{R}^{2n} \to \mathbb{R}^m$ (so that $A = f^{-1}(0)$).

Take a polynomial linkage $\mathcal{L}$ for $f$. Name the elements of the input set: $P = \left\{a_1, \dots, a_{2n}\right\}$. The output set $Q$ has $2m$ elements.

The linkage $\mathcal{L}$ does not yet look like the desired linkage: since $\mathcal{L}$ has $2n$ inputs, the partial configuration space $\mathrm{Conf}_{\mathbb{M}}^P(\mathcal{L})$ is a subset of $(\mathbb{R}^2)^{2n}$ (in fact, it is a subset of $\mathcal{I}^{2n}$), while $A$ is a subset of $(\mathbb{R}^2)^n$ (in particular, we are looking for a linkage with $n$ inputs). To obtain $\mathrm{Conf}_{\mathbb{M}}^P(\mathcal{L}) = A \subseteq (\mathbb{R}^2)^n$, we have to modify $\mathcal{L}$ in the following way.

\begin{enumerate}
\item With several ($x_0 = 0$)-line linkages and diagonal parallelizers, extend the linkage $\mathcal{L}$ to a new one with new vertices $c_2, c_4, c_6, \dots, c_{2n}$ such that for all realization $\phi$ and for all $k \in \{1, \dots, n\}$
\[ x_{\phi(c_{2k})} = 0; \]
\[ y_{\phi(c_{2k})} = y_{\phi(a_{2k})} ~~~ \left(\text{\emph{i.e. }} x_{\phi(c_{2k})} + t_{\phi(c_{2k})} = x_{\phi(a_{2k})} + t_{\phi(a_{2k})} \right). \]
\item With several vertical and horizontal parallelizers, extend this linkage to a new one with vertices $d_2, d_4, d_6, \dots, d_{3n}$ such that for all realization $\phi$ and for all $k \in \{1, \dots, n\}$
\[ x_{\phi(d_{2k})} = x_{\phi(a_{2k-1})}; \]
\[ t_{\phi(d_{2k})} = t_{\phi(c_{2k})}. \]
\end{enumerate}

Thus, for all realization $\phi$ and all $k \in \{1, \dots, n\}$, we have $x_{\phi(d_{2k})} = x_{\phi(a_{2k-1})}$ and $t_{\phi(d_{2k})} = x_{\phi(a_{2k})}$.

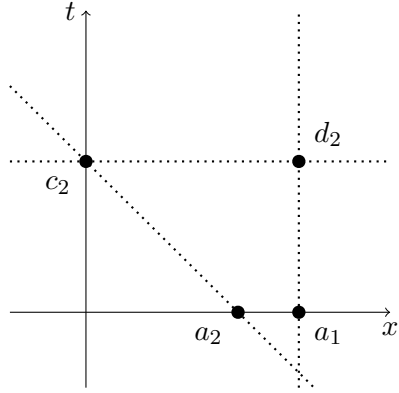
\begin{figure}[h!]
\begin{center}
\begin{tikzpicture}[scale=1]
\coordinate (x1) at (-1,0) {};
\coordinate[label=below:$x$] (x2) at (4,0) {};
\coordinate (y1) at (0,-1) {};
\coordinate[label=left:$t$] (y2) at (0,4) {};
\node[vertex, label=below:$a_1$, rotate=45] (a1) at (2.8,0) {};
\node[vertex, label=left:$a_2$, rotate=45] (a2) at (2,0) {};
\node[vertex, label=left:$c_2$, rotate=45] (c2) at (0,2) {};
\node[vertex, label=right:$d_2$, rotate=45] (d2) at (2.8,2) {};
\draw[dotted, thick] (-1,3) -- (3,-1);
\draw[dotted, thick] (-1,2) -- (4,2);
\draw[dotted, thick] (2.8,-1) -- (2.8,4);
\draw[->] (x1) -- (x2);
\draw[->] (y1) -- (y2);
\end{tikzpicture}
\end{center}
\caption{A partial realization of the four vertices $a_1, a_2, c_2, d_2$. We have $x_{\phi(d_{2})} = x_{\phi(a_{1})}$ and $t_{\phi(d_{2})} = x_{\phi(a_{2})}$.}
\end{figure}

Let $P = \{d_2, d_4, \dots, d_{2n}\}$. Note that the input map $p$ is a finite covering onto the simply connected set $(\mathbb R^2)^n$. Therefore, $p$ is a trivial covering. The output set $Q$ is unchanged.

Fix the outputs to the origin: precisely, replace $F$ by $F \cup Q$ and let \[ \forall a \in Q ~~~ \phi_0(a) = \begin{pmatrix}0\\0\end{pmatrix}. \]

We obtain as desired $\mathrm{Reg}_\mathbb{M}^P(\mathcal{L}) = \mathrm{Conf}_{\mathbb{M}}^P(\mathcal{L}) = A$.

Finally, if $A$ is any \emph{semi-algebraic} set of $(\mathbb{R}^2)^n$, then $A$ is the projection of an algebraic set $B$ of $(\mathbb{R}^2)^N$ for some $N \geq n$. Construct the linkage $\mathcal{L}_1$ such that $\mathrm{Conf}_{\mathbb{M}}^{P_1}(\mathcal{L}_1) = B$ and remove the unnecessary inputs $d_{2n+2}, d_{2n+4} \ldots, d_{2N}$. Then
\[ \mathrm{Conf}_{\mathbb{M}}^P(\mathcal{L}) = A, \]
which ends the proof Theorem~\refkempemink.

\chapter{Linkages in the hyperbolic plane} \label{chapHyp}

The aim of this chapter is to prove Theorem~\ref{kempe-hyp}.

\section{Generalities on the hyperbolic plane}

\begin{definition} \label{defHalfPlane} The \emph{Poincaré half-plane model} is the half-plane:
\[ \left\{ \begin{pmatrix}x\\y\end{pmatrix} \in \mathbb{R}^2 \middle\arrowvert y > 0 \right\} \]
endowed with the metric:
\[ \frac{(dx)^2 + (dy)^2}{y^2}. \]
This model is the one we will always use in this chapter.

The distance $\delta$ on $\mathbb{H}^2$ is given by the formula:
\[ \delta\left(\begin{pmatrix}x_1\\y_1\end{pmatrix}, \begin{pmatrix}x_2\\y_2\end{pmatrix}\right) = \arcosh \left( 1 + \frac{ {(x_2 - x_1)}^2 + {(y_2 - y_1)}^2 }{ 2 y_1 y_2 } \right). \]
\end{definition}

\subsection{Circles}
In the Poincaré half-plane model, a hyperbolic circle with hyperbolic center $\alpha$ and hyperbolic radius $R$ is in fact a Euclidean circle with center $\beta$ and radius $r$, where $y_\beta = y_\alpha \cosh R$, $x_\beta = x_\alpha$ and $r = y_\alpha \sinh R$. Also notice that $y_\beta = \sqrt{y_\alpha^2 + r^2}$.

\subsection{Some compact subsets of $\mathbb H^2$} \label{somecompact}
Since we work with linkages with compact configuration spaces, whereas $\mathbb H^2$ is not compact, we need to introduce some compact subsets on which we will use our linkages. 
Fix a real parameter $\eta > 1$, and think of it as a very large number (the precise meaning of ``large'' will be given later).

Let
\[ \mathcal{I}_0:= \left\{\alpha \in \mathbb{H}^2 ~ \middle\arrowvert ~ y_\alpha = 2, \lvert x_\alpha \rvert \leq 10 \eta \right\}. \]
\[ \mathcal{J}_0:= \left\{\alpha \in \mathbb{H}^2 ~ \middle\arrowvert ~ x_\alpha = 0, ~ 2e^{-10 \eta} \leq y_\alpha \leq 2e^{10 \eta} \right\}. \]
\[ \mathcal{B}_0:= \left\{\alpha \in \mathbb{H}^2 ~ \middle\arrowvert ~ \lvert x_\alpha \rvert \leq 10 \eta, ~ 2e^{-10 \eta} \leq y_\alpha \leq 2e^{10 \eta} \right\}. \]

For any segment of positive length $I$, we write $\hat{I}$ the line containing $I$. For example:
\[ \hat{\mathcal{I}_0}:= \left\{\alpha \in \mathbb{H}^2 ~ \middle\arrowvert ~ y_\alpha = 2 \right\}. \]

\begin{center}
\begin{tikzpicture}
\node[label=left:{$y = 0$}, inner sep=0] (a1) at (-5, 0) {};
\node[label=left:{$\mathcal I_0$}, inner sep=0] (a3) at (-3, 1) {};
\node[label=above:{$\mathcal J_0$}, inner sep=0] (a5) at (0, 5) {};
\node[label=right:{$\mathcal B_0$}, inner sep=0] (a7) at (3, 5) {};
\path[fill=black!20] (-3,0.2) rectangle (3, 5) {};
\node[inner sep=0] (a6) at (0, 0.2) {};
\node[inner sep=0] (a4) at (3, 1) {};
\node[inner sep=0] (a2) at (5, 0) {};
\draw (a1) -- (a2);
\draw (a3) -- (a4);
\draw (a5) -- (a6);
\end{tikzpicture}
\end{center}

\section{Elementary linkages for geometric operations}

\subsection{The circle linkage}

\begin{center}
\shorthandoff{:}
\begin{tikzpicture}[scale=1.5]
\node[fixed, label=left:$a$] (a) at (-1,0) {};
\node[input, label=right:$b$] (b) at (1,0) {};
\draw (a) to[bend left] node[edge] {$l$} (b);
\end{tikzpicture}
\shorthandon{:}
\end{center}

We let $F = \{a\}$ and $P = \{b\}$ (see Section~\ref{notations} for the notations).

In this linkage $\mathrm{Conf}_{\mathbb{H}^2}^P(\mathcal{L})$ is a hyperbolic circle, which is also a Euclidean circle. Conversely, if $\mathcal{C}$ is a Euclidean circle contained in the half-plane, it is also a hyperbolic circle, so there is a circle linkage such that $\mathrm{Conf}_{\mathbb{H}^2}^P(\mathcal{L}) = \mathcal{C}$. Moreover, $\mathrm{Reg}_{\mathbb{H}^2}^P(\mathcal{L}) = \mathrm{Conf}_{\mathbb{H}^2}^P(\mathcal{L})$.

\subsection{The robotic arm linkage}

\begin{center}
\shorthandoff{:}
\begin{tikzpicture}[scale=1.5]
\node[input, label=left:$a$] (a) at (-1,0) {};
\node[input, label=right:$b$] (b) at (1,0) {};
\node[vertex, label=right:$c$] (c) at (0,2) {};
\draw (c) to[bend right] node[edge] {$l_2$} (a);
\draw (c) to[bend left] node[edge] {$l_1$} (b);
\end{tikzpicture}
\shorthandon{:}
\end{center}

We let $P = \{a, b\}, l_1 > 0, l_2 > 0$. We have:
\[ \mathrm{Conf}_{\mathbb{H}^2}^P(\mathcal{L}) = \left\{ \psi \in (\mathbb{H}^2)^P ~ \middle\arrowvert ~ \lvert l_1 - l_2 \rvert \leq \delta(\psi(a), \psi(b)) \leq l_1 + l_2 \right\} \]
and $\mathrm{Reg}_{\mathbb{H}^2}^P(\mathcal{L})$ contains:
\[ \left\{ \psi \in \mathrm{Conf}_{\mathbb{H}^2}^P(\mathcal{L}) ~ \middle\arrowvert ~ \lvert l_1 - l_2 \rvert < \delta(\psi(a), \psi(b)) < l_1 + l_2 \right\} \]
(recall that $\delta$ is the hyperbolic distance on $\mathbb H^2$).

\subsection{The Peaucellier inversor}

\begin{center}
\shorthandoff{:}
\begin{tikzpicture}[scale=2]
\node[fixed, label=below:$a$] (v5) at (-2,0) {};
\node[input, label=left:$b$] (v3) at (0,0) {};
\node[vertex, label=above:$d$] (v2) at ({sqrt(2)/2},{sqrt(2)/2}) {};
\node[vertex, label=below:$e$] (v1) at ({sqrt(2)/2},{-sqrt(2)/2}) {};
\node[output, label=right:$c$] (v0) at ({sqrt(2)},0) {};
\node[vertex, label=right:$f$] (v6) at ({sqrt(2)/2},-1.5) {};
\node[vertex, label=right:$g$] (v7) at (3,0) {};

\draw (v3) to[bend left=20] node[edge] {$r$} (v2) to[bend left=20] node[edge] {$r$} (v0) to[bend left=20] node[edge] {$r$} (v1) to[bend left=20] node[edge] {$r$} (v3);
\draw (v2) to[bend right=10] node[edge] {$l$} (v5) to[bend right=10] node[edge] {$l$} (v1);
\draw (v3) to[bend right=10] node[edge] {$t_1$} (v6) to[bend right=10] node[edge] {$t_2$} (v0);
\draw (v2) to[bend left=10] node[edge] {$t_1$} (v7) to[bend left=10] node[edge] {$t_2$} (v1);
\end{tikzpicture}
\shorthandon{:}
\end{center}

We let $F = \{a\}, P = \{b\}, Q = \{c\}$. We require $l \neq r, t_1 \neq t_2, t_1 > r, t_2 > r$.

\begin{prop}
This linkage is functional for the (Euclidean) inversion with respect to the circle $\mathcal{C}$ with hyperbolic center $\phi_0(a)$ and hyperbolic radius $\arcosh \frac{\cosh l}{\cosh r}$.
\end{prop}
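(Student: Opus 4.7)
The plan is to exploit the linkage's hyperbolic symmetry to reduce to a one-dimensional problem on a single geodesic, verify an inversive identity there by a direct Vieta-style computation, and then globalize using the fact that hyperbolic isometries and Euclidean inversions are both (anti-)Möbius transformations of $\hat{\mathbb C}$. Fix a realization $\phi$. The hypotheses $t_1 \neq t_2$ and $l \neq r$, together with the auxiliary rods through $f$ and $g$, guarantee $\phi(b) \neq \phi(c)$ and $\phi(d) \neq \phi(e)$. Since each of $\phi(a),\phi(b),\phi(c)$ is hyperbolically equidistant from $\phi(d)$ and $\phi(e)$, all three lie on the hyperbolic perpendicular bisector $\gamma$ of $\phi(d)\phi(e)$; in particular they are hyperbolically collinear, and $\phi(d),\phi(e)$ are the reflections of each other across $\gamma$.

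Next I would apply an orientation-preserving hyperbolic isometry $\sigma$ sending $\phi(a)$ to $(0,1)$ and $\gamma$ to the $y$-axis, so that $\sigma(\phi(b)) = (0,y_b)$, $\sigma(\phi(c))=(0,y_c)$, $\sigma(\phi(d))=(\xi,\eta)$, $\sigma(\phi(e))=(-\xi,\eta)$. Using that a hyperbolic circle with hyperbolic center $(x_0,y_0)$ and hyperbolic radius $R$ is the Euclidean circle with center $(x_0, y_0 \cosh R)$ and radius $y_0\sinh R$ (as recalled at the start of the chapter), the three constraints $\delta(a,d)=l$, $\delta(b,d)=r$, $\delta(c,d)=r$ become
\begin{align*}
\xi^2 + \eta^2 &= 2\eta \cosh l - 1,\\
\xi^2 + \eta^2 &= 2\eta y_b \cosh r - y_b^2,\\
\xi^2 + \eta^2 &= 2\eta y_c \cosh r - y_c^2.
\end{align*}
The last two display $y_b,y_c$ as the two roots of $Y^2 - 2\eta(\cosh r)Y + (\xi^2+\eta^2) = 0$; combining Vieta's relations $y_b+y_c = 2\eta\cosh r$ and $y_b y_c = \xi^2+\eta^2$ with the first equation and eliminating $\eta$ yields, setting $k := \cosh l/\cosh r$, the clean factored identity
\[
(y_b - k)(y_c - k) = k^2 - 1.
\]

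This identity is precisely the Euclidean inversion of $(0,y_b)$ on the $y$-axis through the Euclidean circle $\mathcal C_\ast$ with Euclidean center $(0,k)$ and Euclidean radius $\sqrt{k^2-1}$; reading this Euclidean data back hyperbolically via the same formula, $\mathcal C_\ast$ has hyperbolic center $(0,\sqrt{k^2-(k^2-1)})=(0,1)=\sigma(\phi(a))$ and hyperbolic radius $\arcosh k$, so $\mathcal C_\ast = \sigma(\mathcal C)$. To remove $\sigma$ I would argue that a Euclidean inversion across a circle is an anti-Möbius transformation of $\hat{\mathbb C}$ while $\sigma$ is a (holomorphic) Möbius transformation, so $\sigma^{-1}\circ \mathrm{inv}_{\mathcal C_\ast}\circ \sigma$ is an anti-Möbius map of $\hat{\mathbb C}$ fixing the Euclidean circle $\sigma^{-1}(\mathcal C_\ast) = \mathcal C$ pointwise; composing it with $\mathrm{inv}_{\mathcal C}$ gives a holomorphic Möbius transformation with three fixed points, hence the identity, so $\sigma^{-1}\circ \mathrm{inv}_{\mathcal C_\ast}\circ \sigma = \mathrm{inv}_{\mathcal C}$ and $\phi(c) = \mathrm{inv}_{\mathcal C}(\phi(b))$. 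The main obstacle is the middle step: the algebraic elimination that delivers the factored form $(y_b-k)(y_c-k)=k^2-1$, together with the numerical coincidence identifying $\sqrt{k^2-1}$ with the Euclidean radius of the hyperbolic circle of hyperbolic radius $\arcosh k$ centered at $(0,1)$. The symmetry reduction upstream and the Möbius rigidity downstream are essentially forced once this one-variable picture is in place.
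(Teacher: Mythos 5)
Your proof is correct, and its skeleton matches the paper's: use the symmetry of the rhombus to put $\phi(a),\phi(b),\phi(c)$ on one geodesic (the perpendicular bisector of $[\phi(d),\phi(e)]$, with $\phi(d)\neq\phi(e)$ forced by $t_1\neq t_2$ through $g$), normalize by an isometry so this geodesic is the $y$-axis and $\phi(a)\mapsto(0,1)$, verify the one-dimensional inversive identity, and transport back. Where you genuinely differ is the computation of that identity: the paper applies the hyperbolic Pythagorean theorem to the triangles $(\mu\,\phi(a)\,\phi(d))$, $(\mu\,\phi(b)\,\phi(d))$, $(\mu\,\phi(c)\,\phi(d))$ (with $\mu$ the hyperbolic midpoint of $[\phi(d),\phi(e)]$) and computes $y_{\phi(b)},y_{\phi(c)}$ explicitly as quotients of exponentials of $\arcosh$ expressions before checking $(y_\alpha-y_{\phi(b)})(y_\alpha-y_{\phi(c)})=\frac{(\cosh l)^2}{(\cosh r)^2}-1$, whereas you write the three edge constraints as Euclidean circle equations in the half-plane model and obtain the same identity $(y_b-k)(y_c-k)=k^2-1$ from Vieta's relations for the quadratic having $y_b,y_c$ as its two roots --- a more elementary elimination that avoids hyperbolic trigonometry (it needs $y_b\neq y_c$, which you correctly extract from $t_1\neq t_2$ via the vertex $f$). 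You also supply, via M\"obius/anti-M\"obius rigidity, a proof of the conjugation step (that conjugating the inversion in $\sigma(\mathcal{C})$ by the isometry $\sigma$ gives the Euclidean inversion in $\mathcal{C}$), which the paper merely asserts in its general case. Like the paper, you implicitly read the hypothesis $l\neq r$ as $l>r$, which is needed for $k=\cosh l/\cosh r>1$ so that $\arcosh k$ and $\sqrt{k^2-1}$ make sense; this is forced by the statement itself and is not a gap in your argument.
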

\begin{proof}
Let $\phi \in \mathrm{Conf}_{\mathbb{H}^2}(\mathcal{L})$. Let $\mu$ be the middle of the hyperbolic segment $[\phi(d)\phi(e)]$.

\smallskip {\bf First case.} In this case, we assume $\phi_0(a) = \begin{pmatrix}0\\1\end{pmatrix}$, $x_\mu = 0$ and $y_\mu \leq 1$.

$\phi(d)$ and $\phi(e)$ have two possible values each, and $\phi(d) \neq \phi(e)$ because $t_1 \neq t_2$. By symmetry, $\mu$ is also the middle of the hyperbolic segment $[\phi(b), \phi(c)]$ and $x_{\phi(b)} = x_{\phi(c)} = 0$. If necessary, we exchange $b$ and $c$ so that $y_{\phi(c)} \geq y_{\phi(b)}$.

Let $\alpha$ be the Euclidean center of $\mathcal{C}$. We have
\[ y_\alpha = y_{\phi(a)} \cosh \arcosh \frac{\cosh l}{\cosh r} = \frac{\cosh l}{\cosh r}. \]

From the hyperbolic Pythagorean Theorem applied to the hyperbolic triangles $(\mu\phi(a)\phi(d))$, $(\mu\phi(b)\phi(d))$ and $(\mu\phi(c)\phi(d))$, letting $D = \delta(\phi(d), \mu)$, we get:
\[ \delta(\phi(a), \mu) = \arcosh \frac{\cosh l}{\cosh D} \]
\[ \delta(\phi(c), \mu) = \delta(\phi(b), \mu) = \arcosh \frac{\cosh r}{\cosh D} \]

We may now compute the coordinates of $\phi(b)$ and $\phi(c)$:
\[ y_{\phi(b)} = \frac{\exp \arcosh \frac{\cosh r}{\cosh D}}{\exp \arcosh \frac{\cosh l}{\cosh D}} \]
\[ y_{\phi(b)} = \frac{\cosh r + \sqrt{(\cosh r)^2 - (\cosh D)^2}}{\cosh l + \sqrt{(\cosh l)^2 - (\cosh D)^2}} \]

\[ y_{\phi(c)} = \frac{1}{\left(\exp \arcosh \frac{\cosh r}{\cosh D}\right)\left(\exp \arcosh \frac{\cosh l}{\cosh D}\right)} \]
\[ y_{\phi(c)} = \frac{(\cosh D)^2}{\left(\cosh r + \sqrt{(\cosh r)^2 - (\cosh D)^2}\right)\left(\cosh l + \sqrt{(\cosh l)^2 - (\cosh D)^2}\right)}. \]

Finally, we obtain as desired:
\[ (y_\alpha - y_{\phi(b)})(y_\alpha - y_{\phi(c)}) = \frac{(\cosh l)^2}{(\cosh r)^2} - 1. \]

\smallskip {\bf General case.}
Let $\Phi: \mathbb{H}^2 \mapsto \mathbb{H}^2$ be an isometry such that $\Phi(\phi(a)) = \begin{pmatrix}0\\1\end{pmatrix}$, $x_{\Phi(\mu)} = 0$ and $y_{\Phi(\mu)} \leq 1$. Let $i$ be the inversion with respect to the circle with hyperbolic center $\begin{pmatrix}0\\1\end{pmatrix}$ and hyperbolic radius $\arcosh \frac{\cosh l}{\cosh r}$. Then $\phi(c) = \Phi^{-1} \circ i \circ \Phi (\phi(b))$, and $\Phi^{-1} \circ i \circ \Phi$ is the inversion with respect to the circle $\mathcal{C}$.
\end{proof}

We now study the workspace of the input $b$. Obviously, the input cannot be in the image of the lower half-plane by the inversion, because the output has to remain in the upper half-plane. Moreover, since the two edges $(bf)$ and $(bc)$ have different lengths, the input cannot be a fixed point of the inversion.

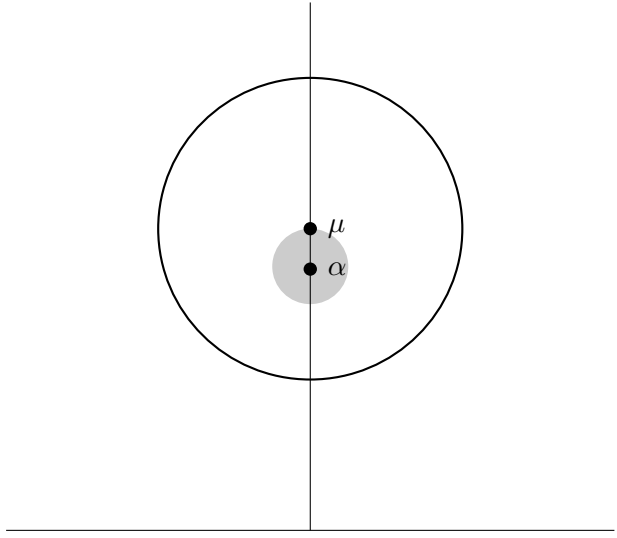
\begin{figure}[!ht]
\begin{center}
\shorthandoff{:}
\begin{tikzpicture}[scale=2]
\coordinate[] (c2) at (0,{sqrt(3)}) {};
\coordinate[] (c4) at (0,{7/4}) {};
\coordinate[] (c3) at (0,2) {};
\coordinate[] (a1) at (-2, 0) {};
\coordinate[] (a4) at (2, 0) {};
\coordinate[] (a5) at (0, 0) {};
\coordinate[] (a6) at (0, 3.5) {};
\draw[black, thick] (c3) circle (1);
\path[fill=black!20] (c4) circle ({1/4});
\node[vertex, label=right:$\alpha$] (a2) at (c2) {};
\node[vertex, label=right:$\mu$] (a3) at (c3) {};
\draw (a1) -- (a4);
\draw (a5) -- (a6);
\end{tikzpicture}
\shorthandon{:}
\end{center}
\caption{The set $K$ must not intersect the gray disk $\mathcal{D}$ or the black circle $\mathcal{C}$.}
\end{figure}

The following proposition tells us that these two obstructions are essentially the only ones.

\begin{prop} \label{domPeauc}
Let $\mathcal{C}$ be a circle of hyperbolic center $\alpha \in \mathbb{H}^2$, hyperbolic radius $R > 0$, Euclidean center $\mu$ and Euclidean radius $r$. Let $\mathcal D$ be the closed disk of hyperbolic center $\alpha$ and hyperbolic radius $Q = \delta(\alpha, \mu)$.

Let $K$ be a compact set in $\mathbb{H}^2 \setminus (\mathcal{C} \cup \mathcal{D})$. Then there exists a choice of $\phi_0(a), l, r, t_1, t_2$ such that the Peaucellier linkage with these lengths is functional for the inversion with respect to $\mathcal{C}$, and such that $K \subseteq \mathrm{Reg}_{\mathbb{H}^2}^P(\mathcal{L}) \subseteq \mathrm{Conf}_{\mathbb{H}^2}^P(\mathcal{L})$.
\end{prop}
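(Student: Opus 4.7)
I would take $\phi_0(a)=\alpha$ and $l=\arcosh(\cosh R\cdot\cosh r)$, so that the preceding proposition guarantees the linkage is functional for the Euclidean inversion $i$ with respect to $\mathcal{C}$. The parameters $r>0$ and $t_1\neq t_2$, $t_1,t_2>r$ remain at our disposal. The strategy is to send $r\to\infty$ and show that, for $r$ large enough and suitably chosen $t_1,t_2$, the workspace of $b$ eventually engulfs any prescribed compact $K\subseteq\mathbb{H}^2\setminus(\mathcal{C}\cup\mathcal{D})$.

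\textbf{Key geometric observation.} The points $\alpha$ and $\mu$ lie on a common vertical line in the half-plane model, and hyperbolic distance along such a line is $|\log(y_2/y_1)|$; hence $Q=\delta(\alpha,\mu)=\log\cosh R$ and $\mathcal{D}$ is exactly the hyperbolic disk of hyperbolic radius $\log\cosh R$ centered at $\alpha$. A short computation from $\cosh l=\cosh R\cdot\cosh r$ then shows $l-r>Q$ for every finite $r>0$, with $l-r\to Q$ from above and $l+r\to+\infty$ as $r\to\infty$.

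\textbf{Conditions on the input.} For $\phi(b)$ to extend to a realization depending smoothly on the input, I would require: (i) the hyperbolic circles $C(\alpha,l)$ and $C(\phi(b),r)$ meet transversely in two points $\phi(d),\phi(e)$, i.e.\ $l-r<\delta(\alpha,\phi(b))<l+r$; (ii) $\phi(c):=i(\phi(b))\in\mathbb{H}^2$, equivalently $\phi(b)\notin\mathcal{D}$; (iii) $\phi(b)\neq\phi(c)$, equivalently $\phi(b)\notin\mathcal{C}$; and (iv) the auxiliary vertices $\phi(f),\phi(g)$ can be placed consistently, which holds provided $t_1\neq t_2$ are chosen large enough that the circles $C(\phi(b),t_1)\cap C(\phi(c),t_2)$ and $C(\phi(d),t_1)\cap C(\phi(e),t_2)$ meet transversely for every $\phi(b)\in K$. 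Conditions (iii)--(iv) together select the intended kite $\phi(b)\phi(d)\phi(c)\phi(e)$ rather than any degenerate realization.

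\textbf{Conclusion and main obstacle.} Since $K$ is compact and disjoint from $\mathcal{C}\cup\mathcal{D}$, there exist $\epsilon_0,C>0$ with $Q+\epsilon_0<\delta(\alpha,k)<C$ for every $k\in K$. Taking $r$ so large that $l-r<Q+\epsilon_0$ and $l+r>C$ forces (i) on all of $K$; (ii) and (iii) hold by the hypothesis $K\cap(\mathcal{C}\cup\mathcal{D})=\emptyset$; and a suitable choice of $t_1\neq t_2$ (both larger than the $\delta$-diameter of $K\cup i(K)$, say) takes care of (iv). Regularity on $K$ then follows from the smooth dependence of transverse pairs of intersection points on the input, exactly as in the robotic-arm discussion. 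The main obstacle is that, unlike in the Minkowski setting, no hyperbolic rigidified-square linkage is available, so the disambiguation that yields $\phi(c)=i(\phi(b))$ must be produced entirely by the auxiliary vertices $f,g$: the condition $t_1\neq t_2$ forces $\phi(b)\neq\phi(c)$ and $\phi(d)\neq\phi(e)$, thereby singling out the correct branch of the two possible closings of the rhombus.
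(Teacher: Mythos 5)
Your proposal is correct and follows essentially the same route as the paper's proof: fix $\phi_0(a)=\alpha$, keep $\cosh l=\cosh r\cosh R$ so the linkage inverts in $\mathcal C$, use that $l-r$ tends to $Q=\log\cosh R$ from above while $l+r\to\infty$ as $r\to\infty$ (the paper encodes this in $\Phi(u)=\arcosh((\cosh u)(\cosh R))-u$ and picks a compact $K'$ with $K$ in its interior), and then choose $r$, $t_1$, $t_2$ by compactness of $K$. One small correction: to secure your condition (iv) the point is not that $t_1,t_2$ be large, but that $\lvert t_1-t_2\rvert$ be nonzero yet smaller than $\min_{K}\delta\bigl(\phi(b),i(\phi(b))\bigr)$ and $\min_{K}\delta\bigl(\phi(d),\phi(e)\bigr)$ (with $t_1,t_2>r$), i.e.\ the paper's choice of ``$t_1$ and $t_2$ close enough to each other''.
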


\begin{proof}
Let $K'$ be a compact set in $\mathbb{H}^2 \setminus (\mathcal{C} \cup \mathcal{D})$ such that $K \subseteq \overset{\circ}{K'}$ (the interior of $K'$). Let $Q'$, such that $Q < Q' < R$ and $K' \subseteq \mathbb{H}^2 \setminus (\mathcal{C} \cup \mathcal{D}')$, where $\mathcal{D}'$ is the closed disk of hyperbolic center $\alpha$ and hyperbolic radius $Q'$.

Let $Q'' > 0$ such that $K' \subseteq \mathcal{D}''$, where $\mathcal{D}''$ is the open disk of hyperbolic center $\alpha$ and hyperbolic radius $Q''$.

Define
\[ \begin{aligned}
\Phi: \mathbb{R}_{\geq 0} & \to \mathbb{R}
\\ u & \to \arcosh ((\cosh u)(\cosh R)) - u.
\end{aligned} \]

Since $\lim_{u \rightarrow +\infty} \Phi(u) = \log (\cosh R) = Q$, there exists $u_0 \geq Q''$ such that $\Phi(u_0) \leq Q'$.

Let $\phi_0(a) = \alpha$, $r = u_0$ and $l = \Phi(u_0) + u_0$. Then, $R = \arcosh \frac{\cosh l}{\cosh r}$, $l - r \leq Q'$ and $l + r \geq Q''$.

Finally, choose $t_1$ and $t_2$ close enough to each other to have $K' \subseteq \mathrm{Conf}_{\mathbb{H}^2}^P(\mathcal{L})$. Then $K \subseteq \overset{\circ}{K'} \subseteq \mathrm{Reg}_{\mathbb{H}^2}^P(\mathcal{L})$.
\end{proof}

\subsection{The Euclidean line linkage}

The aim of this linkage is to trace out any given Euclidean segment. More precisely, let $\Delta$ be a straight line and $I \subseteq \Delta$ a Euclidean segment: we construct a linkage $\mathcal{L}$ with one input such that $I \subseteq \mathrm{Reg}_{\mathbb{H}^2}^P(\mathcal{L}) \subseteq \mathrm{Conf}_{\mathbb{H}^2}^P(\mathcal{L}) \subseteq \Delta$.

Let $\alpha \in \mathbb{R}^2$, $k_1, k_2 > 0$ such that:
\begin{enumerate}
\item $\Delta$ is outside the closed disk with hyperbolic center $\alpha$ and hyperbolic radius $k_1$;
\item $I$ is contained in the open disk with hyperbolic center $\alpha$ and hyperbolic radius $k_2$.
\end{enumerate}

Let $l$ and $r$ such that $l + r = k_2$. Choosing $l$ and $r$ sufficiently close to $\frac{k_2}{2}$, we may also require $\arcosh \frac{\cosh l}{\cosh r} \leq k_1$. From Proposition~\ref{domPeauc}, we deduce that there is a Peaucellier linkage $\mathcal{L}_1$ such that $I \subseteq \mathrm{Reg}_{\mathbb{H}^2}^P(\mathcal{L}) \subseteq \mathrm{Conf}_{\mathbb{H}^2}^P(\mathcal{L})$. 

Let $i$ be the inversion for which $\mathcal{L}$ is functional. Then $i(\Delta)$ is a circle contained in the half-plane (in which one point has been removed). Let $\mathcal{L}_2$ be a circle linkage for this circle. Let $W_1 = \{c_1\}$ and $\beta(c_1) = b_2$. Construct the combination $\mathcal{L}_3 = \mathcal{L}_1 \cup_\beta \mathcal{L}_2$. This linkage has the desired properties.

Rename the input: $b:= b_1$, and the fixed vertices: $a:= a_1$, $c:= a_2$.

We also add an edge between the two fixed vertices $a$ and $c$, of length $\delta(\phi_{03}(a), \phi_{03}(c))$. This new edge will be useful for Sections~\ref{vertPara} and~\ref{hypalign}.

\subsection{The vertical parallelizer} \label{vertPara}

The aim of this linkage is to force two vertices to have the same $x$ coordinate. More precisely, it has two inputs $a$ and $b$ with:
\small \[ \left\{\psi \in (\mathbb{H}^2)^P ~ \middle\arrowvert ~ x_{\psi(a)} = x_{\psi(b)} \right\} \cap (\mathcal{B}_0)^P \subseteq \mathrm{Reg}_{\mathbb{H}^2}^P(\mathcal{L}) \subseteq \mathrm{Conf}_{\mathbb{H}^2}^P(\mathcal{L}) \subseteq \left\{\psi \in (\mathbb{H}^2)^P ~ \middle\arrowvert ~ x_{\psi(a)} = x_{\psi(b)} \right\} \]
\normalsize (see the notations of Section~\ref{somecompact}).

To construct it, the idea is to allow two vertical Euclidean line linkages to move together horizontally.

Let $\mathcal{L}_1$ and $\mathcal{L}_2$ be two identical line linkages for the segment $\mathcal{J}_0$. Construct their disjoint union $\mathcal{L}_3 = \mathcal{L}_1 \cup \mathcal{L}_2$.

Let $I$ be a horizontal segment of Euclidean length $20 \eta$ centered at $\phi_{01}(a_1)$, and $I'$ a horizontal segment of Euclidean length $20 \eta$ centered at $\phi_{01}(c_1)$. Let $\mathcal{L}_4$ and $\mathcal{L}_5$ be line linkages for $I$ and $I'$ respectively, and construct their disjoint union $\mathcal{L}_6 = \mathcal{L}_4 \cup \mathcal{L}_5$.

Change $F_3$ to $\emptyset$, let $W_3 = \{a_1, c_1, a_2, c_2\}$, $\beta(a_1) = \beta(a_2) = b_4$, $\beta(c_1) = \beta(c_2) = b_5$ and $\mathcal{L}_7 = \mathcal{L}_3 \cup_\beta \mathcal{L}_6$.

Rename the inputs: $a:= b_1, b:=b_2$.

This linkage has the desired properties.

\subsection{The hyperbolic alignment linkage} \label{hypalign}

The \emph{hyperbolic alignment linkage} and the \emph{equidistance linkage} are not needed to prove the differential universality (Theorem~\ref{univ-hyp}), but we will use them to prove the algebraic universality (Theorem~\ref{kempe-hyp}).

The \emph{hyperbolic alignment linkage} forces its three inputs $a$, $b$, $c$ to be on the same hyperbolic line. Fix a real constant $l > 0$, then define
\[ A = \left\{\psi \in (\mathbb{H}^2)^P ~ \middle\arrowvert ~ \psi(a), \psi(b), \psi(c) \text{ are on the same hyperbolic line} \right\} \]
and
\[ B = \left\{\psi \in (\mathbb{H}^2)^P ~ \middle\arrowvert ~ 0 < \max (\delta(\psi(a), \psi(b)), \delta(\psi(b), \psi(c)), \delta(\psi(a), \psi(c))) \leq l \right\}. \]

We want to construct a linkage such that
\[ A \cap B \subseteq \mathrm{Reg}_{\mathbb{H}^2}^P(\mathcal{L}) \subseteq \mathrm{Conf}_{\mathbb{H}^2}^P(\mathcal{L}) \subseteq A. \]

Take a vertical hyperbolic segment $J$ of hyperbolic length $l$. Note that $J$ is also a Euclidean segment, and that any hyperbolic segment of length $l$ is the image of $J$ by a global isometry of $\mathbb{H}^2$.

Take three identical Euclidean line linkages $\mathcal{L}_1$, $\mathcal{L}_2$, $\mathcal{L}_3$ for $J$, with no fixed vertices ($F_1 = F_2 = F_3 = \emptyset$), and glue these three linkages together: let $W_1 = \{ a_1, c_1 \}$, $\beta(a_1) = a_2, \beta(c_1) = c_2$, and $\mathcal{L}_4 = \mathcal{L}_1 \cup_\beta \mathcal{L}_2$. Next, let $W_4 = \{a_2, c_2\}$, $\beta(a_2) = a_3, \beta(c_2) = c_3$, and $\mathcal{L}_5 = \mathcal{L}_4 \cup_\beta \mathcal{L}_3$. Rename the inputs: $a := b_1$, $b := b_2$, $c := b_3$. Since the isometries of $\mathbb H^2$ send the vertical line to other hyperbolic lines, the vertices $a$, $b$ and $c$ are always on the same hyperbolic line and $\mathcal L_5$ is the desired linkage.

\subsection{The equidistance linkage}

The equidistance linkage forces an input $a$ to be equidistant from the two other inputs $d$ and $e$.

Fix two real constants $k_1 > 0$, $k_2 > 0$ and define
\[ A = \left\{\psi \in (\mathbb{H}^2)^P ~ \middle\arrowvert ~ \delta(\psi(a), \psi(d)) = \delta(\psi(a), \psi(e)) \right\} \]
and
\[ B = \left\{\psi \in (\mathbb{H}^2)^P ~ \middle\arrowvert ~ \delta(\psi(d), \psi(e)) \geq k_1, \delta(\psi(a), \psi(d)) \leq k_2 \right\}. \]

We want to construct a linkage $\mathcal L$ such that
\[ A \cap B \subseteq \mathrm{Reg}_{\mathbb{H}^2}^P(\mathcal{L}) \subseteq \mathrm{Conf}_{\mathbb{H}^2}^P(\mathcal{L}) \subseteq A. \]

Start with the following linkage:

\begin{center}
\shorthandoff{:}
\begin{tikzpicture}[scale=2]
\node[input, label=below:$a$] (v5) at (-2,0) {};
\node[vertex, label=left:$b$] (v3) at (0,0) {};
\node[input, label=above:$d$] (v2) at ({sqrt(2)/2},{sqrt(2)/2}) {};
\node[input, label=below:$e$] (v1) at ({sqrt(2)/2},{-sqrt(2)/2}) {};
\node[vertex, label=right:$c$] (v0) at ({sqrt(2)},0) {};
\node[vertex, label=right:$f$] (v6) at ({sqrt(2)/2},-1.5) {};
\node[vertex, label=right:$g$] (v7) at (3,0) {};

\draw (v3) to[bend left=20] node[edge] {$r$} (v2) to[bend left=20] node[edge] {$r$} (v0) to[bend left=20] node[edge] {$r$} (v1) to[bend left=20] node[edge] {$r$} (v3);
\draw (v3) to[bend right=10] node[edge] {$t_1$} (v6) to[bend right=10] node[edge] {$t_2$} (v0);
\draw (v2) to[bend left=10] node[edge] {$t_1$} (v7) to[bend left=10] node[edge] {$t_2$} (v1);
\end{tikzpicture}
\shorthandon{:}
\end{center}

Let $P = \{a, d, e\}$, $F = \emptyset$ and $r > k_2$.

Using a hyperbolic alignement linkage with parameter $l = k_2 + 2r$, force the three vertices $a$, $b$ and $c$ to move on the same hyperbolic line. In other words, combine the linkage on the figure above with a hyperbolic alignment linkage (the combination mapping sends $a, b, c$ to the inputs of the hyperbolic alignment linkage).

Finally, choose $t_1, t_2 > r$ with $t_1 \neq t_2$ and $\lvert t_1 - t_2\rvert$ sufficiently small.

\section{Elementary linkages for algebraic operations}

In this section, we describe linkages which are functional for algebraic operations such as addition or multiplication on real numbers. The real line is identified with $\hat{\mathcal{I}_0} = \left\{\alpha \in \mathbb{H}^2 ~ \middle\arrowvert ~ y_\alpha = 2 \right\}$, which means that we will write simply $x$ instead of $(x, 2)$.

\subsection{The symmetrizer}
\subsubsection{First version}

The symmetrizer is a functional linkage for:
\[ \begin{aligned} f: \mathrm{Conf}_{\mathbb{H}^2}^P(\mathcal{L}) (\subseteq \hat{\mathcal{I}_0}^P) & \to \hat{\mathcal{I}_0} \\ (x_1, x_2) & \longmapsto \frac{x_1 + x_2}{2}. \end{aligned} \]

\begin{center}
\shorthandoff{:}
\begin{tikzpicture}[scale=2]
\node[input, label=left:$b$] (v3) at (0,0) {};
\node[vertex, label=above:$d$] (v2) at ({sqrt(2)/2},{sqrt(2)/2}) {};
\node[input, label=right:$c$] (v0) at ({sqrt(2)},0) {};
\node[vertex, label=right:$f$] (v6) at ({sqrt(2)/2},-1.5) {};
\node[output, label=right:$a$] (v4) at ({sqrt(2)/2},0) {};

\draw (v3) to[bend left=20] node[edge] {$r$} (v2) to[bend left=20] node[edge] {$r$} (v0);
\draw (v3) to[bend right=10] node[edge] {$t_1$} (v6) to[bend right=10] node[edge] {$t_2$} (v0);
\end{tikzpicture}
\shorthandon{:}
\end{center}

Let $P = \{b, c\}, Q = \{a\}$, $t_1 > r$, $t_2 > r$, $\lvert t_1 - t_2 \rvert = 1$, $r = 8 \eta$. The vertices $b$ and $c$ are restricted to move on $\mathcal{I}_0$ using two line linkages: this means that the linkage on the figure above is combined with two line linkages, which are not represented on the figure, with a combination map $\beta$ such that $\beta(b)$ and $\beta(c)$ are the two inputs of the line linkages. The vertices $a$ and $d$ are restricted to move on the same vertical line using a vertical parallelizer.

For this linkage,
\[ \mathrm{Conf}_{\mathbb{H}^2}^P(\mathcal{L}) = \left\{ \psi \in \hat{\mathcal{I}_0}^P ~ \middle\arrowvert ~ 1 \leq \lvert \psi(b) - \psi(c) \rvert \leq 16 \eta \right\} \]
and $\mathrm{Reg}_{\mathbb{H}^2}^P(\mathcal{L})$ contains
\[ \left\{ \psi \in \hat{\mathcal{I}_0}^P ~ \middle\arrowvert ~ 1 < \lvert \psi(b) - \psi(c) \rvert < 16 \eta \right\}. \]

\subsubsection{Modifying $\mathrm{Conf}_{\mathbb{H}^2}^P(\mathcal{L})$}
Since we want the symmetrizer to handle input vertices $b$ and $c$ which are close to each other, or even equal, the first version is not suitable for our purpose.
Notice that \[ \frac{x_1 + x_2}{2} = \frac{\frac{x_1 + 8 \eta}{2} + \frac{x_2 + (-8 \eta)}{2}}{2}. \]
Following this formula and the idea of Fact~\ref{propCombFunc}, take one symmetrizer $\mathcal{L}_1$, but add one input to the set of fixed vertices and set it to the coordinate $\begin{pmatrix}8 \eta \\ 2 \end{pmatrix}$. Next, take a second symmetrizer $\mathcal{L}_2$, add one input to the set of fixed vertices and set it to $\begin{pmatrix}- 8 \eta \\ 2 \end{pmatrix}$. Finally, take a third symmetrizer $\mathcal{L}_3$ and combine it with $\mathcal{L}_1$ and $\mathcal{L}_2$, using a combination mapping $\beta$ which sends the outputs of $\mathcal{L}_1$, $\mathcal{L}_2$ to the inputs of $\mathcal{L}_3$.

Thus, by combining three symmetrizers we get a new version of the symmetrizer $\mathcal{L}$, which is functional for the same function, but such that $\mathrm{Reg}_{\mathbb{H}^2}^P(\mathcal{L})$ ($\subseteq \mathrm{Conf}_{\mathbb{H}^2}^P(\mathcal{L})$) contains:
\[ \left\{ \psi \in \hat{\mathcal{I}_0}^P ~ \middle\arrowvert ~ \lvert \psi(b) \rvert < 7 \eta, \lvert \psi(c) \rvert < 7 \eta \right\}. \]

\subsection{The adder}
Take a symmetrizer $\mathcal{L}$. Fix the vertex $b$ to $0$, let $P = \{a\}$ and $Q = \{c\}$. We obtain a functional linkage for $x \longmapsto 2x$, for which $\mathrm{Reg}_{\mathbb{H}^2}^P(\mathcal{L})$ contains:
\[ \left\{ \psi \in \hat{\mathcal{I}_0}^P ~ \middle\arrowvert ~ \lvert \psi(a) \rvert < 3 \eta \right\}. \]
Combining this linkage with the symmetrizer, we get a functional linkage for \[ (x_1, x_2) \longmapsto 2 \frac{x_1 + x_2}{2} = x_1 + x_2.\] For this linkage, $\mathrm{Reg}_{\mathbb{H}^2}^P(\mathcal{L})$ contains:
\[ \left\{ \psi \in \hat{\mathcal{I}_0}^P ~ \middle\arrowvert ~ \lvert \psi(a) \rvert < 3 \eta, \lvert \psi(b) \rvert < 3 \eta \right\}. \]

This linkage is called the \emph{adder}.

\subsection{The opposite value linkage}
Take a symmetrizer $\mathcal{L}$. Fix the vertex $a$ to $0$, let $P = \{c\}$ and $Q = \{b\}$. We obtain a functional linkage for $x \longmapsto -x$, for which $\mathrm{Reg}_{\mathbb{H}^2}^P(\mathcal{L})$ contains:
\[ \left\{ \psi \in \hat{\mathcal{I}_0}^P ~ \middle\arrowvert ~ \lvert \psi(c) \rvert < 7 \eta \right\}. \]

\subsection{The rational homothety linkage}
Let $n$ be an integer. Using $n - 1$ adders, we get a functional linkage for $x \longmapsto nx$, for which $\mathrm{Reg}_{\mathbb{H}^2}^P(\mathcal{L})$ contains
\[ \left\{ \psi \in \hat{\mathcal{I}_0}^P ~ \middle\arrowvert ~ \lvert \psi(c) \rvert < \frac{3}{n} \eta \right\}. \]

Switching the input and the output, we get a functional linkage for $x \longmapsto \frac{x}{n}$, for which $\mathrm{Reg}_{\mathbb{H}^2}^P(\mathcal{L})$ contains
\[ \left\{ \psi \in \hat{\mathcal{I}_0}^P ~ \middle\arrowvert ~ \lvert \psi(c) \rvert < 3 \eta \right\}. \]

\subsection{The square function linkage}
With the input set $P = \{a\}$ and the output set $Q = \{b\}$, it is a functional linkage for the function:
\[ \begin{aligned} f: \mathrm{Conf}_{\mathbb{H}^2}^P(\mathcal{L}) (\subseteq \hat{\mathcal{I}_0}) & \to \hat{\mathcal{I}_0} \\ x & \longmapsto x^2, \end{aligned} \]
so that $\mathrm{Conf}_{\mathbb{H}^2}^P(\mathcal{L}) \subseteq \hat{\mathcal{I}_0}^P$ and $\mathrm{Reg}_{\mathbb{H}^2}^P(\mathcal{L})$ contains a neighborhood of $\begin{pmatrix}0\\2\end{pmatrix}$ in $\hat{\mathcal{I}_0}$.

To construct it, let $\mathcal{C}$ be the circle of Euclidean center $\begin{pmatrix} 0 \\ 2 \end{pmatrix}$ and radius $1$, $K$ a compact set like in Proposition~\ref{domPeauc} such that $\begin{pmatrix}\pm 0.5 \\ 2\end{pmatrix} \in \overset{\circ}{K}$ and $\begin{pmatrix}-4 \\ 2\end{pmatrix} \in \overset{\circ}{K}$, and $\mathcal{L}$ a Peaucellier linkage such that $K \subseteq \mathrm{Reg}_{\mathbb{H}^2}^P(\mathcal{L})$. Then $\mathcal{L}$ is functional for
\[ \begin{aligned} f: \mathrm{Conf}_{\mathbb{H}^2}^P(\mathcal{L}) (\subseteq \hat{\mathcal{I}_0}) & \to \hat{\mathcal{I}_0} \\ x & \longmapsto \frac{1}{x}. \end{aligned} \]

Now, as in the Minkowski case, we use the algebraic trick first described by Kapovich and Millson~\cite{mk}:
\[ \forall x \in \mathbb{R} \setminus \{-0.5, 0.5\} ~~~ x^2 = 0.25 + \frac{1}{\frac{1}{x - 0.5} - \frac{1}{x + 0.5}}, \]

Thus, the desired linkage is obtained by composition of the previous linkages.

\subsection{The multiplier}

\subsubsection{A first version} \label{multfirstver}

The multiplier is a linkage with $P = \{a, b\}$ and $Q = \{c\}$, which is a functional linkage for the function:
\[ \begin{aligned} f: \mathrm{Conf}_{\mathbb{H}^2}^P(\mathcal{L}) (\subseteq \hat{\mathcal{I}_0}^P) & \to \hat{\mathcal{I}_0} \\ (x_1, x_2) & \longmapsto x_1x_2 \end{aligned} \]
such that $\mathrm{Conf}_{\mathbb{H}^2}^P(\mathcal{L}) \subseteq \hat{\mathcal{I}_0}^P$ and $\mathrm{Reg}_{\mathbb{H}^2}^P(\mathcal{L})$ contains a neighborhood $\mathcal{U}$ of $\begin{pmatrix}0\\2\end{pmatrix}$ in $\hat{\mathcal{I}_0}^P$.

We simply construct it by combining square function linkages and adders, using the identity:
\[ \forall x_1, x_2 \in \mathbb{R} ~~~ x_1x_2 = \frac{1}{4}((x_1+x_2)^2 - (x_1-x_2)^2). \]

\subsubsection{Modifying $\mathrm{Conf}_{\mathbb{H}^2}^P(\mathcal{L})$}
We are now going to construct a multiplier such that $\mathrm{Conf}_{\mathbb{H}^2}^P(\mathcal{L}) \subseteq \hat{\mathcal{I}_0}^P$ and $\mathrm{Reg}_{\mathbb{H}^2}^P(\mathcal{L})$ contains $\left\{ \begin{pmatrix}x\\2\end{pmatrix} ~ \middle\arrowvert ~ x \in [-\eta, \eta] \right\}^P$.

Let $n$ be an integer such that $\left\{ \begin{pmatrix}x\\2\end{pmatrix} ~ \middle\arrowvert ~ x \in [-\frac{\eta}{n}, \frac{\eta}{n}] \right\}^P \subseteq \mathcal{U}$ (where $\mathcal{U}$ is defined in Section~\ref{multfirstver}). Using two rational homothety linkages and one multiplier (first version), and the formula:
\[ \forall x_1, x_2 \in \mathbb{R} ~~~ x_1x_2 = n^2 \left(\frac{x_1}{n}\right)\left(\frac{x_2}{n}\right) \]
we obtain the desired linkage.

\subsection{The polynomial linkage} \label{polylink-hyp}
Let $f: \mathbb{R}^n \to \mathbb{R}^m$ be a polynomial of degree $d \geq 1$ and coefficients in $[-K, K]$ with $1 \leq K \leq \eta$. We still identify $\mathbb{R}$ with $\hat{\mathcal{I}_0}$. Our aim is to construct a functional linkage for
$ f|_{\mathrm{Conf}_{\mathbb{H}^2}^P(\mathcal{L})}$ ,
with $\mathrm{Conf}_{\mathbb{H}^2}^P(\mathcal{L}) \subseteq \hat{\mathcal{I}_0}^P$ and $\mathrm{Reg}_{\mathbb{H}^2}^P(\mathcal{L})$ containing $\mathcal{U}_{K, d, n}:= \left[- M_{K, d, n}, M_{K, d, n}\right]^P$, where $M_{K, d, n} = \frac{1}{K (d+1)^n} \eta^{1/d}$.

It is obtained by combining adders and multipliers. The coefficients are represented by fixed vertices.

\begin{remarque}
At this stage, it would be possible to fix the outputs of the polynomial linkage to $\begin{pmatrix}0 \\ 2 \end{pmatrix}$ to prove directly Theorem~\ref{univ-hyp}. However, the proof of Theorem~\ref{kempe-hyp} is more complicated, since we need the input vertices to move outside the line $\hat{\mathcal{I}_0}$, as it will be explained in the next section.
\end{remarque}

\section{End of the proof of Theorem~\refkempehyp}

Let $A$ be a compact semi-algebraic subset of $(\mathbb{H}^2)^n$. First, we assume that $A$ is a compact \emph{algebraic} subset of $(\mathbb{H}^2)^n$.

We want to construct a linkage with $P = \{d_2, d_4, \dots, d_{2n}\}$ such that $\mathrm{Conf}_{\mathbb{H}^2}^P(\mathcal{L}) = A$. The idea is to identify each point $\alpha$ of the Poincaré half-plane with three coordinates $X_\alpha^1, X_\alpha^2, X_\alpha^3$, defined by:
\[ \forall i \in \{1, 2, 3\} ~~~ X_\alpha^i = \delta\left(\begin{pmatrix}i \\ 2 \end{pmatrix}, \alpha \right). \]

Since $\begin{pmatrix}1 \\ 2 \end{pmatrix}, \begin{pmatrix}2 \\ 2 \end{pmatrix}, \begin{pmatrix}3 \\ 2 \end{pmatrix}$ are not aligned, these three coordinates characterize the point $\alpha$.

Let $f: (\mathbb{R}^2)^n = \mathbb{R}^{2n} \to \mathbb{R}^m$ be a polynomial function (of degree $d$) with coefficients in $[-1, 1]$ such that $A = f^{-1}(0)$.

We may assume that $A$ is contained in the set \[ \mathcal{V} := \left(\left(- M_{1, d, 2n}, M_{1, d, 2n}\right) \times \left(3, M_{1, d, 2n}\right)\right)^n \] (see Section~\ref{polylink-hyp} for the definition of $M_{1, d, 2n}$). If it is not, choose an isometry $\Phi$ of $\mathbb{H}^2$ such that $\Phi(A)$ is contained in this set (for a large enough $\eta$), construct the desired linkage, and then replace $\phi_0$ by $\Phi^{-1} \circ \phi_0$.

If necessary, increase $\eta$ (but do not change the definition of $\mathcal V$ by doing so) so that
\[ \small M_{100, 2, 2} \geq \max \left\{\lvert x \rvert ~ \middle\arrowvert ~ x \in \mathbb R, i \in \{1, 2, 3\}, \delta\left(\begin{pmatrix} x \\ 2 \end{pmatrix}, \begin{pmatrix}i\\2\end{pmatrix}\right) \leq \max_{\substack{(\alpha_1, \dots, \alpha_n) \in \mathcal V \\ k \in \{1, \dots, n\}}} \delta\left(\alpha_k, \begin{pmatrix}i\\2\end{pmatrix}\right)\right\}. \normalsize \]

We are now ready to construct our linkage. Start with a linkage $\mathcal{L}$ with the input set $P = \{d_2, d_4, \dots, d_{2n}\}$ and no edge. Add other vertices
\[ a_1, a_2, \dots, a_{2n}, b_2^1, b_2^2, b_2^3, b_4^1, b_4^2, b_4^3, \dots, b_{2n}^1, b_{2n}^2, b_{2n}^3, c_2^1, c_2^2, c_2^3, c_4^1, c_4^2, c_4^3, \dots, c_{2n}^1, c_{2n}^2, c_{2n}^3 \] which are restricted to move on $\hat{\mathcal{I}_0}$ using line linkages.

Combine the linkage with equidistance linkages (with parameters $k_1 = 1$ and $k_2 = M_{1, d, 2n} + 4$) so that for all $\phi \in \mathrm{Conf}_{\mathbb{H}^2}(\mathcal{L})$, all $k \in \{1, \dots, n\}$ and all $i \in \{1, 2, 3\}$: \[ X_{\phi(b_{2k}^i)}^i = X_{\phi(d_{2k})}^i. \]
Then, use polynomial linkages so that for all $\phi \in \mathrm{Conf}_{\mathbb{H}^2}(\mathcal{L})$, all $k \in \{1, \dots, n\}$ and all $i \in \{1, 2, 3\}$:
\[ x_{\phi(c_{2k}^i)} = x_{\phi(a_{2k})} \cdot (x_{\phi(b_{2k}^i)} - i)^2 \]
and
\[ x_{\phi(c_{2k}^i)} = 2 \cdot ((x_{\phi(a_{2k - 1})} - i)^2 + (x_{\phi(a_{2k})} - 2)^2). \]

Thus we have for all $i$ and $k$:
\[ x_{\phi(a_{2k})} \cdot (x_{\phi(b_{2k}^i)} - i)^2 = 2 \cdot ((x_{\phi(a_{2k - 1})} - i)^2 + (x_{\phi(a_{2k})} - 2)^2) \]
\[ \arcosh \left(1 + \frac{(x_{\phi(b_{2k}^i)} - i)^2}{2 \cdot 2}\right) = \arcosh \left(1 + \frac{(x_{\phi(a_{2k - 1})} - i)^2 + (x_{\phi(a_{2k})} - 2)^2}{2 \cdot x_{\phi(a_{2k})}}\right) \]
\[ \delta\left(\phi(b_{2k}^i), \begin{pmatrix}i\\2\end{pmatrix}\right) = \delta\left( \begin{pmatrix}x_{\phi(a_{2k-1})} \\ x_{\phi(a_{2k})}\end{pmatrix}, \begin{pmatrix}i\\2\end{pmatrix}\right) \]
\[ X_{\phi(b_{2k}^i)}^i = X_{\tiny \begin{pmatrix}x_{\phi(a_{2k-1})}\\ x_{\phi(a_{2k})}\end{pmatrix}}^i \]
\[ X_{\phi(d_{2k})}^i = X_{\tiny \begin{pmatrix}x_{\phi(a_{2k-1})}\\ x_{\phi(a_{2k})}\end{pmatrix}}^i. \]
Thus:
\[ \phi(d_{2k}) = \begin{pmatrix}x_{\phi(a_{2k-1})}\\ x_{\phi(a_{2k})}\end{pmatrix}. \]

Add vertices $e_1, \dots, e_m$ and use a polynomial linkage so that for all $\phi \in \mathrm{Conf}_{\mathbb{H}^2}(\mathcal{L})$:
\[ f(x_{\phi(a_1)}, \dots, x_{\phi(a_{2n})}) = (x_{\phi(e_1)}, \dots, x_{\phi(e_m)}). \]

Now, notice that $p|_{\pi^{-1}(\mathcal V)}$ is a smooth finite covering onto $\mathcal V$, which is necessarily trivial since $\mathcal V$ is simply connected.

To finish the construction, fix the vertices $e_1, \dots, e_m$ to the point $\begin{pmatrix}0\\2\end{pmatrix}$.

Thus, \[ \mathrm{Reg}_{\mathbb{H}^2}^{\{a_1, \dots, a_{2n}\}}(\mathcal{L}) = \mathrm{Conf}_{\mathbb{H}^2}^{\{a_1, \dots, a_{2n}\}}(\mathcal{L}) = A \subseteq \mathbb{R}^{2n} = \hat{\mathcal{I}_0}, \] and finally:
\[ \mathrm{Reg}_{\mathbb{H}^2}^P(\mathcal{L}) = \mathrm{Conf}_{\mathbb{H}^2}^P(\mathcal{L}) = A \subseteq (\mathbb{H}^2)^n \subseteq (\mathbb{R}^2)^n. \]

Moreover, the restriction map $\mathrm{Conf}_{\mathbb{H}^2}(\mathcal{L}) \to \mathrm{Conf}_{\mathbb{H}^2}^P(\mathcal{L})$ is a smooth finite covering, which is trivial as the restriction of a trivial covering.

If $A$ is only a compact semi-algebraic subset of $(\mathbb{H}^2)^n$, we know from Proposition~\ref{projAlgHyp} that $A$ is the projection onto the first coordinates of a compact algebraic set $B$. Apply the above construction to $B$ and remove some vertices from the input set to obtain $\mathrm{Conf}_{\mathbb{H}^2}^P(\mathcal{L}) = A$, which ends the proof of Theorem~\refkempehyp.

\chapter{Linkages in the sphere} \label{chapSphere}

The aim of this chapter is to prove Theorem~\ref{kempe-sphere}. In the first three sections, we focus on the two-dimensional sphere, while higher dimensions are studied in the last section.

The sphere $\mathbb S^2$ will be considered as the unit sphere of $\mathbb R^3$. Thus, a point $\alpha \in \mathbb S^2$ is denoted by three coordinates $x_\alpha, y_\alpha, z_\alpha$.

\section{Elementary linkages for geometric operations} \label{link21}

\subsection{The articulated arm linkage} \label{artArm}

\begin{center}
\shorthandoff{:}
\begin{tikzpicture}[scale=1.5]
\node[input, label=right:$a$] (a) at (-1,0) {};
\node[input, label=left:$b$] (b) at (1,0) {};
\node[vertex, label=above:$c$] (c) at (0,2) {};
\draw (c) to[bend right] node[edge] {$l_2$} (a);
\draw (c) to[bend left] node[edge] {$l_1$} (b);
\end{tikzpicture}
\shorthandon{:}
\end{center}

$P = \{a, b\}, Q = \emptyset, F = \emptyset$. The lengths of the edges $l_1$ and $l_2$ satisfy $0 < l_1, l_2 \leq \pi$.

A standard computation gives:

\[ \mathrm{Conf}_{\mathbb S^2}^P(\mathcal{L}) = \left\{ \psi \in (\mathbb{S}^2)^P ~ \middle\arrowvert ~ \lvert l_1 - l_2 \rvert \leq \delta(\psi(a), \psi(b)) \leq \min (l_1 + l_2, 2\pi - (l_1 + l_2)) \right\}. \]

\subsection{The great circle linkage} \label{gcl}
One vertex $a$ is linked to $k$ other vertices $v_1, \dots, v_k$, by edges of length $\pi/2$.

\begin{center}
\begin{tikzpicture}[scale=1.5]
\node[input, label=above:$v_k$] (v5) at (2,0) {};
\node[fill=white, label=below:$\cdots$] (v4) at (1,0) {};
\node[input, label=above:$v_3$] (v3) at (0,0) {};
\node[input, label=above:$v_2$] (v2) at (-1,0) {};
\node[input, label=above:$v_1$] (v1) at (-2,0) {};
\node[vertex, label=above:$a$] (a) at (0,2) {};
\draw (a) to[bend right] node[edge] {$\pi/2$} (v1);
\draw (a) to[bend right] node[edge] {$\pi/2$} (v2);
\draw (a) to node[edge] {$\pi/2$} (v3);
% \draw (a) to[bend left] node[edge] {$\pi/2$} (v4);
\draw (a) to[bend left] node[edge] {$\pi/2$} (v5);
\end{tikzpicture}
\end{center}

$P = \{v_1, \dots, v_k\}, Q = \emptyset, F = \emptyset$.

This linkage forces $v_1, \dots, v_k$ to be on the same great circle of the sphere:
\[ \mathrm{Conf}_{\mathbb S^2}^P(\mathcal{L}) = \left\{ \psi \in (\mathbb{S}^2)^P ~ \middle\arrowvert ~ \exists f \in (\mathbb{R}^3)^* ~~ \forall i \in \{1, \dots, k\} ~~ f(\psi(v_k)) = 0 \right\}. \]

The \emph{fixed great circle linkage} is a variant in which $a \in F$. Then
\[ \mathrm{Conf}_{\mathbb S^2}^P(\mathcal{L}) = (\mathbb{S}^2 \cap a^\bot)^P. \]

\subsection{The symmetrizer}
The symmetrizer is a functional linkage for symmetry with respect to a great circle (\emph{i.e.} orthogonal symmetry with respect to a plane $\mathcal{P}$ in $\mathbb{R}^3$). It is the key to the construction of several other linkages, but it is also the most difficult to obtain.

\subsubsection{First version}

Here is a first attempt, which we shall call $\mathcal{L}_1$.

\begin{center}
\shorthandoff{:}
\begin{tikzpicture}[scale=2]
\node[input, label=below:$a$] (a) at (1,0.5) {};
\node[input, label=right:$b$] (b) at (0,1) {};
\node[output, label=above:$c$] (c) at (0,-1) {};
\node[vertex, label=left:$d$] (d) at (0,0) {};
\node[vertex, label=right:$g$] (g) at (2,0) {};
\node[vertex, label=above:$i$] (i) at (2,2) {};
\node[vertex, label=below:$j$] (j) at (2,-2) {};
\node[vertex, label=above:$k$] (k) at (-1,1.5) {};
\node[vertex, label=below:$l$] (l) at (-1,-1.5) {};
\node[vertex, label=left:$m$] (m) at (-2,0) {};

\draw (m) to[bend left] (k);
\draw (k) to[bend left] (i);
\draw (i) to[bend left] node[edge] {$\pi/3$} (g);
\draw (g) to[bend left] node[edge] {$\pi/3$} (j);
\draw (i) to[bend left] node[edge] {$2\pi/3$} (j);
\draw (a) to[bend right] node[edge] {$\pi/6$} (i);
\draw (j) to[bend left] (l);
\draw (l) to[bend left] (m);
\draw (a) to[bend right] (m);
\draw (d) to[bend right] (a);
\draw (a) to[bend right] (g);
\draw (g) to[bend left] (d);
\draw (k) to[bend left] (b);
\draw (b) to[bend left] (g);
\draw (g) to[bend left] (c);
\draw (c) to[bend left] (l);
\draw (j) to[bend left] (d);
\draw (d) to[bend left] (i);
\end{tikzpicture}
\shorthandon{:}
\end{center}
All the edges have length $\pi/2$, except when another length is indicated.

Let $P = \{a, b\}, Q = \{c\}$, and $F = \emptyset$. We want $a$ to be the unit normal vector to the (linear) plane $\mathcal{P}$ of symmetry, and $b$ to be the point to which we want to apply the symmetry. The result of the symmetry is $c$.

\begin{prop} \label{confsym1}
Fix some $\psi \in (\mathbb{S}^2)^P$. Let $\alpha \in \mathbb{S}^2$ be symmetric to $\psi(b)$ with respect to $\psi(a)^\bot$. Then:
\[ \left\{ \phi(c) ~ \middle\arrowvert ~ \phi \in p^{-1}(\psi) \right\} = \{\alpha, -\alpha\}. \] 
\end{prop}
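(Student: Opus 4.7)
The plan is to exploit the many edges of length $\pi/2$ to encode orthogonality constraints. First I would note that the three edges between $a$, $d$, $g$ (all of length $\pi/2$) force $(\phi(a), \phi(d), \phi(g))$ to be an orthonormal basis of $\mathbb{R}^3$, so that $\phi(d)$ and $\phi(g)$ are orthonormal in $\psi(a)^\perp$. Next, the distances $\pi/6$, $\pi/2$, $\pi/3$ from $i$ to $a$, $d$, $g$ determine $\phi(i) = \tfrac{\sqrt{3}}{2}\phi(a) + \tfrac{1}{2}\phi(g)$ uniquely, and likewise the constraints $\delta(j,d)=\pi/2$, $\delta(j,g)=\pi/3$, $\delta(i,j)=2\pi/3$ force $\phi(j) = -\tfrac{\sqrt{3}}{2}\phi(a) + \tfrac{1}{2}\phi(g)$, making $i$ and $j$ mirror images under the reflection across $\psi(a)^\perp$. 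The edges $(b,g)$ and $(c,g)$ of length $\pi/2$ then confine $\phi(b)$ and $\phi(c)$ to the great circle $\phi(g)^\perp \cap \mathbb{S}^2 = \mathrm{span}(\phi(a), \phi(d)) \cap \mathbb{S}^2$; I would write $\phi(b) = \cos\theta_b\,\phi(d) + \sin\theta_b\,\phi(a)$ and $\phi(c) = \cos\theta_c\,\phi(d) + \sin\theta_c\,\phi(a)$, so that $\alpha$ corresponds to $\theta_c = -\theta_b$ and $-\alpha$ to $\theta_c = \pi - \theta_b$.

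The key step is to translate the subgraph on $k$, $l$, $m$ into a single algebraic relation between $\theta_b$ and $\theta_c$. Since $\phi(k)$ is a unit vector orthogonal to the independent pair $(\phi(i), \phi(b))$, it equals $\pm(\phi(i) \times \phi(b))/\|\phi(i) \times \phi(b)\|$; expanding this cross product in the frame $(\phi(a), \phi(d), \phi(g))$ gives components proportional to $(-\tfrac{1}{2}\cos\theta_b,\ \tfrac{1}{2}\sin\theta_b,\ \tfrac{\sqrt{3}}{2}\cos\theta_b)$. The analogous expansion for $\phi(l) = \pm(\phi(j) \times \phi(c))/\|\phi(j)\times\phi(c)\|$ yields $(-\tfrac{1}{2}\cos\theta_c,\ \tfrac{1}{2}\sin\theta_c,\ -\tfrac{\sqrt{3}}{2}\cos\theta_c)$. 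Because $\phi(m)$ must lie in $\psi(a)^\perp$ and be orthogonal to both $\phi(k)$ and $\phi(l)$, the projections of $\phi(k)$ and $\phi(l)$ onto $\mathrm{span}(\phi(d),\phi(g))$ have to be parallel. Neither projection vanishes (the pair $(\sin\theta,\cos\theta)$ is never zero), so parallelism reduces to the vanishing of a $2\times 2$ determinant, which simplifies to $\sin(\theta_b + \theta_c) = 0$. Therefore $\theta_c \equiv -\theta_b \pmod{\pi}$, i.e., $\phi(c) \in \{\alpha, -\alpha\}$.

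For the reverse inclusion I would exhibit explicit realizations. When $\theta_c = -\theta_b$, one takes the bottom half of the linkage (the vertices $c,l$) to be the image of the top half (the vertices $b,k$) under the reflection across $\psi(a)^\perp$, and chooses $\phi(m)$ to be a unit vector in $\psi(a)^\perp$ orthogonal to the common $(d,g)$–projection of $\phi(k)$ and $\phi(l)$; the case $\theta_c = \pi - \theta_b$ is obtained by replacing $\phi(l)$ by the reflection of $\phi(k)$ across $\psi(a)^\perp$, again leaving room to choose $\phi(m)$. The main obstacle I anticipate is the bookkeeping around discrete sign ambiguities — each of $\phi(g)$, $\phi(d)$, $\phi(k)$, $\phi(l)$ is determined only up to sign — together with the handling of degenerate inputs such as $\psi(b) = \pm\psi(a)$ where the frame is not rigid. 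These are resolved by the observation that all sign flips act within the target set $\{\alpha, -\alpha\}$, and that at degenerate inputs the reflection $\alpha$ already equals $\pm\psi(b)$, making the conclusion immediate.
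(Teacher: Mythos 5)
Your argument is correct, and its forward half is executed differently from the paper's. The paper first normalizes by an isometry ($\psi(a)=(0,0,1)$, $\psi(b)$ in the $xz$-plane, then a rotation or reflection fixing $\psi$ to pin down $\phi(d)$ and $\phi(g)$), computes $\phi(i),\phi(j)$ explicitly, and then walks down the chain $k,m,l,c$, observing at each step that the vertex lies on the intersection line of two orthogonal complements and hence has exactly two opposite possible values; that one of the two values of $\phi(c)$ is $\alpha$ is then seen by the mirror symmetry of the abstract linkage. You instead work in the moving orthonormal frame $(\phi(a),\phi(d),\phi(g))$ supplied by any given realization, pin down $\phi(i),\phi(j)$ there, write $\phi(b),\phi(c)$ as angles on the great circle $\phi(g)^\perp$, compute $\phi(k),\phi(l)$ up to sign as normalized cross products, and convert the existence of the unit vector $\phi(m)\in\phi(a)^\perp\cap\phi(k)^\perp\cap\phi(l)^\perp$ into the determinant identity $\sin(\theta_b+\theta_c)=0$. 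This buys something real: the constraint relating $b$ and $c$ is obtained in one algebraic stroke, valid for every realization without any normalization, and it makes explicit the point the paper leaves implicit, namely that the various sign ambiguities in $d,g,k,m,l$ cannot enlarge the set of possible $\phi(c)$. The paper's route, in exchange, needs no coordinates beyond the initial normalization and directly yields the ``exactly two opposite values'' picture used again later. For the reverse inclusion both you and the paper build realizations by reflecting the top half ($b,k$ together with $i$) across $\psi(a)^\perp$; this also gives nonemptiness of $p^{-1}(\psi)$ for every $\psi$, which the equality implicitly requires.

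One small repair: your description of the $\theta_c=\pi-\theta_b$ case of the reverse inclusion repeats the $\alpha$-case construction (you again set $\phi(l)=\sigma(\phi(k))$, where $\sigma$ is the reflection across $\psi(a)^\perp$), so as written it does not produce a new configuration. The clean fix is to note that every edge incident to $c$ (namely $gc$ and $cl$) has length $\pi/2$, i.e.\ imposes an orthogonality condition, so replacing $\phi(c)$ by $-\phi(c)$ in the realization already built with $\phi(c)=\alpha$ yields a valid realization with $\phi(c)=-\alpha$. With that one-line substitution your proof is complete; the degenerate inputs $\psi(b)=\pm\psi(a)$ need no separate treatment, since your frame argument only uses the independence of $(\phi(i),\phi(b))$ and $(\phi(j),\phi(c))$, which always holds.
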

\begin{proof}[Proof]
Assume that $\phi \in \mathrm{Conf}_{\mathbb S^2}^P(\mathcal{L}_1)$ is such that $\phi|_P = \psi$. Applying a rotation to the sphere if necessary, we may assume that $\psi(a) = (0, 0, 1)$ and $y_{\psi(b)} = 0$. Let $\phi \in \mathrm{Conf}_{\mathbb S^2}(\mathcal{L}_1)$. If $x_{\psi(b)} = 0$, we may also assume up to rotation that $y_{\phi(d)} = 0$. If $x_{\psi(b)} \neq 0$, then $\phi(g)^\bot$ contains the two distinct points $\psi(a)$ and $\psi(b)$, so $\phi(g) \in \{ \pm (0, 1, 0) \}$. Applying a symmetry with respect to $\phi(g)^\bot$ if necessary, we may assume that $\phi(g) = (0, 1, 0)$. But $\phi(d) \in \phi(g)^\bot \cap \phi(a)^\bot$, so that $\phi(d) \in \{\pm (1, 0, 0)\}$. Therefore, whether or not $x_{\psi(b)} = 0$, we may assume $\phi(d) \in \{\pm (1, 0, 0)\}$ and $\phi(g) = (0, 1, 0)$. Hence, $\phi(i) = (0, 1/2, \sqrt{3}/2)$ and $\phi(j) = (0, 1/2, -\sqrt{3}/2)$.

Since $\phi(k)$ is on the line $\phi(i)^\bot \cap \phi(b)^\bot$, it has two possible (opposite) values.

$\phi(k) \not\in \mathbb{R}\phi(a)$ because $\phi(k) \in \phi(i)^\bot$ and $\phi(a) \not\in \phi(i)^\bot$.

Since $m$ is on the line $\phi(k)^\bot \cap \phi(a)^\bot$, it has two possible opposite values.

$\phi(j) \not\in \mathbb{R}\phi(m)$ because $\phi(m) \in \phi(a)^\bot$ and $\phi(j) \not\in \phi(a)^\bot$.

Since $l$ is on the line $\phi(j)^\bot \cap \phi(m)^\bot$, $\phi(l)$ has two possible opposite values.

$\phi(l) \not\in \mathbb{R}\phi(g)$ because $\phi(l)\in\phi(j)^\bot$ and $\phi(g)\not\in\phi(j)^\bot$.

Since $c$ is on the line $\phi(l)^\bot \cap \phi(g)^\bot$, $\phi(c)$ has two possible opposite values.

Note that the construction of $\phi$ described above really provides a realization of the linkage, which proves that $\mathrm{Conf}_{\mathbb S^2}^P(\mathcal{L}_1)$ is the whole $(\mathbb{S}^2)^P$.

To see that one of the possible values of $\phi(c)$ is symmetric to $\phi(b)$ with respect to $\phi(a)^\bot$, use the symmetries of the abstract linkage: take $\phi \in \mathrm{Conf}_{\mathbb S^2}^{\{a, b, d, g, i, k, m\}}(\mathcal L_1)$, and extend $\phi$ to $V$ (the set of all vertices) by letting $\phi(c)$ be symmetric to $\phi(b)$ with respect to $\phi(a)^\bot$, $\phi(l)$ symmetric to $\phi(k)$ with respect to $\phi(a)^\bot$, and $\phi(j)$ symmetric to $\phi(i)$ with respect to $\phi(a)^\bot$. Then it is clear that $\phi \in \mathrm{Conf}_{\mathbb S^2}(\mathcal{L}_1)$.
\end{proof}

In order to avoid the configurations in which $\phi(c)$ is not symmetric to $\phi(b)$ with respect to $\phi(a)^\bot$, we introduce a second version of the symmetrizer.

\subsubsection{Second version}

Here is a different version of the symmetrizer, $\mathcal{L}_2$:

\begin{center}
\shorthandoff{:}
\begin{tikzpicture}[scale=3.5]
\node[input, label=below:$a$] (a) at (1,0.5) {};
\node[input, label=right:$b$] (b) at (0,1) {};
\node[output, label=above:$c$] (c) at (0,-1) {};
\node[vertex, label=left:$d$] (d) at (0,0) {};
\node[vertex, label=right:$g$] (g) at (2,0) {};
\node[vertex, label=left:$e$] (e) at (-0.5,0.5) {};
\node[vertex, label=right:$f$] (f) at (0.5,-0.5) {};

\draw (d) to[bend right] (a);
\draw (a) to[bend right] (g);
\draw (g) to[bend left] (d);
\draw (b) to[bend left] (g);
\draw (g) to[bend left] (c);
\draw (f) to[bend left] (e);
\draw (b) to node[edge] {$\pi/4$} (e) to node[edge] {$\pi/4$} (d) to node[edge] {$\pi/4$} (f) to node[edge] {$\pi/4$} (c);

\end{tikzpicture}
\shorthandon{:}
\end{center}

All the edges have length $\pi/2$, except when another length is indicated; $P = \{a, b\}, Q = \{c\},$ and $F = \emptyset$. As before, we want $c$ to be symmetric to $b$ with respect to $a^\bot$.

\begin{prop} \label{confsym2}
\begin{enumerate}
\item For all $\psi \in (\mathbb{S}^2)^P$, there exists $\phi \in p^{-1}(\psi)$ such that $\phi(c)$ is symmetric to $\phi(b)$ with respect to $\phi(a)^\bot$.
\item There does not exist $\phi \in \mathrm{Conf}_{\mathbb S^2}(\mathcal{L}_2)$ such that $-\phi(c)$ is symmetric to $\phi(b)$ with respect to $\phi(a)^\bot$.
\end{enumerate}
\end{prop}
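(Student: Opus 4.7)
The plan is to exploit the orthonormal frame $(\phi(a), \phi(g), \phi(d))$ which the three $\pi/2$-edges of the triangle $adg$ force (up to sign in $\phi(g), \phi(d)$). The edges $b$-$g$ and $c$-$g$, both of length $\pi/2$, confine $\phi(b), \phi(c)$ to the plane $\phi(g)^\bot = \mathrm{span}(\phi(a), \phi(d))$. The essential non-trivial constraints come from the length-$\pi/4$ chains $b$-$e$-$d$ and $d$-$f$-$c$: by the spherical triangle inequality, the existence of $\phi(e)$ with $\delta(\phi(e),\phi(b)) = \delta(\phi(e),\phi(d)) = \pi/4$ forces $\delta(\phi(b), \phi(d)) \leq \pi/2$, equivalently $\phi(b) \cdot \phi(d) \geq 0$; symmetrically $\phi(c) \cdot \phi(d) \geq 0$. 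The edge $e$-$f$ of length $\pi/2$ imposes $\phi(e) \cdot \phi(f) = 0$.

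For (1), I will build a realization explicitly. First choose $\phi(g)$ to be any unit vector orthogonal to both $\psi(a)$ and $\psi(b)$, then $\phi(d)$ a unit vector in $\psi(a)^\bot \cap \phi(g)^\bot$ with the sign chosen so that $\phi(d) \cdot \psi(b) \geq 0$. Define $\phi(c) := \psi(b) - 2(\psi(b)\cdot\psi(a))\psi(a)$, the symmetric image of $\psi(b)$ across $\psi(a)^\bot$; since both $\phi(d)$ and $\phi(g)$ lie in $\psi(a)^\bot$ and are thus fixed by this reflection, we get $\phi(c) \cdot \phi(g) = 0$ and $\phi(c) \cdot \phi(d) = \psi(b) \cdot \phi(d) \geq 0$. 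Introducing coordinates $\phi(a) = (1,0,0)$, $\phi(d) = (0,1,0)$, $\phi(g) = (0,0,1)$ and writing $\psi(b) = (\cos\beta, \sin\beta, 0)$ with $\sin\beta \geq 0$, I would then take the two candidate positions for $\phi(e)$ (on the small circle of spherical radius $\pi/4$ about $\phi(d)$) and for $\phi(f)$, and verify by direct computation that one can select them to form an antipodal pair on this small circle, which automatically yields $\phi(e) \cdot \phi(f) = 0$.

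For (2), I will argue by contradiction. Suppose $\phi \in \mathrm{Conf}_{\mathbb S^2}(\mathcal L_2)$ satisfies $-\phi(c) = r_{\phi(a)}(\phi(b))$, where $r_{\phi(a)}$ denotes reflection across $\phi(a)^\bot$. Since $\phi(d) \in \phi(a)^\bot$ is fixed by $r_{\phi(a)}$, we get $\phi(c) \cdot \phi(d) = -\phi(b) \cdot \phi(d)$; combined with the nonnegativity observed above, this yields $\phi(b) \cdot \phi(d) = \phi(c) \cdot \phi(d) = 0$. As $\phi(b), \phi(c) \in \mathrm{span}(\phi(a), \phi(d))$, we conclude $\phi(b), \phi(c) \in \{\pm\phi(a)\}$; and since $r_{\phi(a)}$ acts as $-\mathrm{id}$ on $\mathbb R\phi(a)$, the assumption forces $\phi(c) = \phi(b)$, so $\phi(b) = \phi(c) = \pm\phi(a)$. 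But then $\delta(\phi(b), \phi(d)) = \delta(\phi(c), \phi(d)) = \pi/2$, making $\phi(e)$ and $\phi(f)$ each \emph{uniquely} determined as the midpoint $(\phi(b)+\phi(d))/\sqrt{2}$, so $\phi(e) = \phi(f)$, contradicting the edge $e$-$f$ of length $\pi/2$. The main obstacle I expect is the explicit coordinate verification in (1) that the antipodal pair on the small circle around $\phi(d)$ indeed lies at distance $\pi/4$ from $\phi(b)$ and $\phi(c)$, which amounts to checking the algebraic identity $\phi(e) \cdot \phi(f) = 0$ using the closed-form expressions derived from the two $\pi/4$-constraints.
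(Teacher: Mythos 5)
Your proposal is correct and follows essentially the same route as the paper: in (1) your ``antipodal pair on the small circle about $\phi(d)$'' is precisely the paper's choice of $\phi(f)$ as the image of $\phi(e)$ under the half-turn of axis $\mathbb{R}\phi(d)$, which also makes your deferred coordinate check immediate, since that half-turn fixes $\phi(d)$ and sends $\phi(b)$ to its reflection $\phi(c)$, hence carries $\mathcal{C}(\phi(b),\pi/4)\cap\mathcal{C}(\phi(d),\pi/4)$ onto $\mathcal{C}(\phi(c),\pi/4)\cap\mathcal{C}(\phi(d),\pi/4)$ while giving $\phi(e)\cdot\phi(f)=0$. In (2) you repackage the paper's case analysis ($\delta(\phi(c),\phi(d))<\pi/2$ versus $=\pi/2$) into the single deduction $\phi(b)\cdot\phi(d)=\phi(c)\cdot\phi(d)=0$, hence $\phi(b)=\phi(c)=\pm\phi(a)$ and then $\phi(e)=\phi(f)$ contradicting the length-$\pi/2$ edge $ef$, which is a correct and slightly tidier way to reach the same contradiction.
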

\begin{proof}[Proof]
Let us prove the first assertion. Let $\psi \in (\mathbb{S}^2)^P$, choose $\phi(g)$ anywhere in $\psi(a)^\bot \cap \psi(b)^\bot$, and a point $\alpha \in \phi(g)^\bot \cap \phi(a)^\bot$. If $\delta(\alpha, \phi(b)) \leq \pi/2$, let $\phi(d) = \alpha$, else let $\phi(d) = -\alpha$. In any case we have $\delta(\phi(d), \phi(b)) \leq \pi/2$ so we can choose $\phi(e)$ on the intersection of the circles $\mathcal{C}(\phi(b), \pi/4)$ and $\mathcal{C}(\phi(d), \pi/4)$. Let $\phi(c)$ be symmetric to $\phi(b)$ with respect to $\phi(a)^\bot$, and let $\phi(f)$ be the image of $\phi(e)$ by a half turn of axis $\mathbb R \phi(d)$. Then, $\phi$ is a realization of the linkage.

We now prove the second assertion. Let $\phi \in \mathrm{Conf}_{\mathbb S^2}(\mathcal{L}_2)$. If $\delta(\phi(c), \phi(d)) < \pi/2$ then $\delta(-\phi(c), \phi(d)) > \pi/2$ whereas $\delta(\phi(b), \phi(d)) \leq \pi/2$, so $-\phi(c)$ is not symmetric to $\phi(b)$ with respect to $\phi(a)^\bot$. If $\delta(\phi(c), \phi(d)) = \pi/2$ then $\phi(c) = \pm \phi(a)$, which means that $\phi(c)$, $\phi(f)$, $\phi(d)$, $\phi(e)$ are on the same geodesic and $\delta(\phi(c), \phi(e)) = 3\pi/4$. Therefore $\phi(c) \neq \phi(b)$, so $-\phi(c)$ is not symmetric to $\phi(b)$ with respect to $\phi(a)^\bot$.
\end{proof}

$\mathcal{L}_2$ is not a functional linkage for symmetry. There is a possible degenerate configuration which seems difficult to avoid: for any position of the inputs $\psi \in \mathrm{Conf}_{\mathbb S^2}^P(\mathcal{L}_2)$, there is a $\phi \in \mathrm{Conf}_{\mathbb S^2}(\mathcal{L}_2)$ such that $\phi(b) = \phi(d)$. This problem is very related to the problem of the degenerate configurations of the parallelogram, which Kempe did not see when he wrote his original proof. The solution to this problem in the plane is the rigidification of the parallelogram, but the usual rigidification does not work in the sphere.

\subsubsection{Gluing the two versions} \label{gluesym}

We now have two linkages, $\mathcal{L}_1$ and $\mathcal{L}_2$, which are almost functional linkages for symmetry, and have different degenerate configurations.

We glue them together: let $W_1 = \{a_1, b_1, c_1\}$ and $\beta(a_1) = a_2$, $\beta(b_1) = b_2$, $\beta(c_1) = c_2$, and $\mathcal{L} = \mathcal{L}_1 \cup_\beta \mathcal{L}_2$.

We rename some vertices for future reference: $a := a_2$, $b := b_2$, $c := c_2$, $d := d_2$, $g := g_2$.

\begin{prop}
\begin{enumerate}
\item $\mathrm{Conf}_{\mathbb S^2}^P(\mathcal{L}) = (\mathbb{S}^2)^P$.
\item $\mathcal{L}$ is a functional linkage for symmetry: for all $\phi \in \mathrm{Conf}_{\mathbb S^2}(\mathcal{L})$, $\phi(c)$ is symmetric to $\phi(b)$ with respect to $\phi(a)^\bot$.
\end{enumerate}
\end{prop}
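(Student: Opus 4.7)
The plan is to exploit the complementarity between the two residual ambiguities. Proposition~\ref{confsym1} shows that in any realization of $\mathcal{L}_1$ extending a given $\psi \in (\mathbb{S}^2)^P$, the output $\phi(c_1)$ is forced into the two-element set $\{\alpha,-\alpha\}$, where $\alpha$ denotes the reflection of $\psi(b)$ across $\psi(a)^\bot$. Meanwhile Proposition~\ref{confsym2} both exhibits a realization of $\mathcal{L}_2$ with $\phi(c_2) = \alpha$ and forbids any with $\phi(c_2) = -\alpha$. Since the combination $\beta$ identifies $c_1$ with $c_2$, these two conditions will interlock exactly.

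By Fact~\ref{propCombConfSpaces}, a realization $\phi$ of $\mathcal{L}$ is the same datum as a pair $(\phi_1,\phi_2)\in \mathrm{Conf}_{\mathbb{S}^2}(\mathcal{L}_1)\times \mathrm{Conf}_{\mathbb{S}^2}(\mathcal{L}_2)$ with $\phi_1(a_1)=\phi(a)$, $\phi_1(b_1)=\phi(b)$, $\phi_1(c_1)=\phi(c)$, and $\phi_2=\phi|_{V_2}$. For part~(2) I start from such a pair: Proposition~\ref{confsym1} applied to $\phi_1$ gives $\phi(c)\in\{\alpha,-\alpha\}$, and Proposition~\ref{confsym2}(2) applied to $\phi_2$ rules out $-\alpha$, forcing $\phi(c)=\alpha$ as required.

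For part~(1), given $\psi \in (\mathbb{S}^2)^P$, the plan is to build matching realizations $\phi_1,\phi_2$ with common value $\alpha$ at the identified output, then glue them via Fact~\ref{propCombConfSpaces}. Proposition~\ref{confsym2}(1) delivers $\phi_2$ directly. For $\phi_1$, I follow the recipe in the proof of Proposition~\ref{confsym1}: pick $\phi_1(g_1)\in \psi(a)^\bot\cap\psi(b)^\bot$ (handling the coincidence case $\psi(a)\parallel\psi(b)$ by an auxiliary rotation, as in that proof), propagate successively to $\phi_1(d_1),\phi_1(i_1),\phi_1(j_1),\phi_1(k_1),\phi_1(m_1),\phi_1(l_1)$, and then appeal to the final paragraph of that proof---which uses the reflection symmetry of the abstract graph to show that any admissible choice of the sign of $\phi_1(c_1)$ extends to a full realization---to fix $\phi_1(c_1)=\alpha$. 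Gluing gives $\phi \in \mathrm{Conf}_{\mathbb{S}^2}(\mathcal{L})$ above $\psi$.

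The one point that requires care is in part~(1): one must know not merely that the fiber of $\mathcal{L}_1$ over $\psi$ is contained in $\{\alpha,-\alpha\}$, but that \emph{both} signs are actually realized, so that the sign $\alpha$ matching $\phi_2(c)$ is available. This is precisely the content of the symmetry argument at the end of the proof of Proposition~\ref{confsym1}, so no new idea is needed; the whole proposition reduces to bookkeeping on top of Propositions~\ref{confsym1} and~\ref{confsym2} and Fact~\ref{propCombConfSpaces}.
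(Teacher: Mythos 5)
Your argument is correct and is essentially the paper's own proof, which simply declares the proposition an immediate consequence of Propositions~\ref{confsym1} and~\ref{confsym2}; you have merely made explicit the gluing bookkeeping via Fact~\ref{propCombConfSpaces} and the point that Proposition~\ref{confsym1} gives the set equality $\{\phi(c_1)\}=\{\alpha,-\alpha\}$ (so the value $\alpha$ is actually attained), which is exactly what the matching with Proposition~\ref{confsym2}(1) requires.
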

\begin{proof}[Proof]
This is an immediate consequence of propositions \ref{confsym1} and \ref{confsym2}.
\end{proof}

\subsection{The parallelizer}

The parallelizer has three inputs $b$, $c$, $h$, such that
\[ \mathrm{Conf}_{\mathbb S^2}^P(\mathcal{L}) = \left\{ \psi \in (\mathbb{S}^2)^P ~ \middle\arrowvert ~ \delta(\psi(h),\psi(b)) = \delta(\psi(h), \psi(c)) \right\}. \]

Notice that the equality $\delta(\psi(h),\psi(b)) = \delta(\psi(h), \psi(c))$ is equivalent to $\left(\psi(a) \middle\arrowvert \phi(h)\right) = \left(\psi(b) \middle\arrowvert \psi(h)\right)$, where $\left(\cdot \middle\arrowvert \cdot\right)$ denotes the scalar product in $\mathbb R^3$. Therefore, for any linear form $f: \mathbb R^3 \to \mathbb R$, there exists $\alpha \in \mathbb S^2$ such that any realization $\phi$ of the parallelizer with the vertex $h$ fixed at $\alpha$ satisfies:
\[ f(\phi(a)) = f(\phi(b)). \]

To construct the parallelizer, we use the following characterization: $\delta(\psi(h),\psi(b)) = \delta(\psi(h), \psi(c))$ if and only if there exists a linear plane $\mathcal{P}$ containing $\psi(h)$ such that $\psi(b)$ is the reflection of $\psi(c)$ with respect to $\mathcal{P}$.

Start with a symmetrizer $\mathcal{L}_1$ and consider the following linkage $\mathcal{L}_2$:

\begin{center}
\shorthandoff{:}
\begin{tikzpicture}[scale=2.5]
\node[input, label=below:$a_2$] (a) at (1,0) {};
\node[input, label=below:$h_2$] (h) at (-1,0) {};

\draw (h) to[bend left] node[edge] {$\pi/2$} (a);

\end{tikzpicture}
\shorthandon{:}
\end{center}

$P_2 = \{a_2, h_2\}, Q_2 = \emptyset, F_2 = \emptyset$.

Then let $W_1 = \{a_1\}$, $\beta(a_1) = a_2$, and $\mathcal{L} = \mathcal{L}_1 \cup_\beta \mathcal{L}_2$. Change the input set of $\mathcal{L}$ so that $P = \{b_1, c_1, h_2\}$, and rename the inputs: $b = b_1$, $c = c_1$, $h = h_2$, which ends the construction.

\section{Elementary linkages for algebraic operations} \label{link22}

\subsection{The homothety linkage}
Let $\lambda \in \left(0, 1\right)$. Our aim is to construct a linkage which takes one point $\phi(a) = (x_{\phi(a)}, y_{\phi(a)}, 0)$ and, when possible, forces another point $\phi(b) = (x_{\phi(b)}, y_{\phi(b)}, 0)$ to satisfy
\[ y_{\phi(b)} = \lambda y_{\phi(a)}. \]

Start with the following linkage $\mathcal{L}_1$:

\begin{center}
\shorthandoff{:}
\begin{tikzpicture}[scale=3]
\node[input, label=right:$a_1$] (a) at (-2,0) {};
\node[output, label=below:$b_1$] (b) at (-1,0) {};
\node[fixed, label=above:$c_1$] (c) at (0,0) {};
\draw (a) to[bend right] node[edge] {$\arccos \lambda$} (b);
\draw (b) to[bend right] node[edge] {$\arcsin \lambda$} (c);
\draw (c) to[bend right] node[edge] {$\pi/2$} (a);
\end{tikzpicture}
\shorthandon{:}
\end{center}

$P_1 = \{a_1\}$, $Q_1 = \{b_1\}$, $F_1 = \{c_1\}$, $\phi_{01}(c_1) = (0, 0, 1)$.

This linkage is functional for a homothety from the equator to the (smaller) circle of latitude $\arccos \lambda$: thus for all $\phi \in \mathrm{Conf}_{\mathbb S^2}(\mathcal{L}_1)$, $y_{\phi(b_1)} = \lambda y_{\phi(a_1)}$, and
\[ \mathrm{Conf}_{\mathbb S^2}^{P_1}(\mathcal{L}_1) = \mathbb{S}^2 \cap (Oxy) \]

However, $z_{\phi(b_1)} \neq 0$ so we need to improve the construction. Let $\mathcal{L}_2$ be a parallelizer for the linear form $f(x, y, z) = y$ and $\mathcal{L}_3$ a parallelizer for $g(x, y, z) = z$. Let $W_1 = \{b_1\}$, $\beta_1(b_1) = b_2$, and $\mathcal{L}_4 = \mathcal{L}_1 \cup_{\beta_1} \mathcal{L}_2$. Let $W_4 = \{a_1, c_2\}$, $\beta_4(a_1) = b_3$, $\beta_4(c_2) = c_3$, and $\mathcal{L}_5 = \mathcal{L}_4 \cup_{\beta_4} \mathcal{L}_3$.

We get:
\[ \mathrm{Conf}_{\mathbb S^2}^{P_1}(\mathcal{L}_1) = \mathbb{S}^2 \cap (Oxy), \]
and for all $\phi \in \mathrm{Conf}_{\mathbb S^2}(\mathcal{L})$, $y_{\phi(c_3)} = \lambda y_{\phi(a_1)}$, $z_{\phi(c_3)} = 0$.

Finally, rename the two vertices: $a = a_1$ and $b = c_3$.

\subsection{The adder}
Our aim is to construct a linkage which takes two points $\phi(a) = (x_{\phi(a)}, y_{\phi(a)}, 0)$, $\phi(b) = (x_{\phi(b)}, y_{\phi(b)}, 0)$ and, when possible, forces a third point $\phi(c) = (x_{\phi(c)}, y_{\phi(c)}, 0)$ to satisfy
\[ y_{\phi(c)} = y_{\phi(a)} + y_{\phi(b)}. \]

There are several steps to construct such a linkage $\mathcal{L}$.
\begin{enumerate}
\item Restrict the two points $a$, $b$ to move in the $Oxy$ plane by using the fixed great circle linkage for $k = 2$.
\item Using a symmetrizer, extend this linkage to a new one having a vertex $d$ such that $d$ is symmetric to $b$ with respect to the plane $\{y - z = 0\}$. Then for all $\phi \in \mathrm{Conf}_{\mathbb S^2}(\mathcal{L})$ we have $y_{\phi(b)} = z_{\phi(d)}$.
\item With two parallelizers, extend this linkage to a new one having a vertex $e$ such that for all $\phi \in \mathrm{Conf}_{\mathbb S^2}(\mathcal{L})$:
   \begin{enumerate}
   \item $y_{\phi(e)} = y_{\phi(a)}$;
   \item $z_{\phi(e)} = z_{\phi(d)}$.
   \end{enumerate}
\item With two parallelizers, extend this linkage to a new one with a vertex $c$ such that for all $\phi \in \mathrm{Conf}_{\mathbb S^2}(\mathcal{L})$:
   \begin{enumerate}
   \item $y_{\phi(c)} + z_{\phi(c)} = y_{\phi(e)} + z_{\phi(e)}$;
   \item $z_{\phi(c)} = 0$.
   \end{enumerate}
\end{enumerate}

Then for all $\phi \in \mathrm{Conf}_{\mathbb S^2}(\mathcal{L})$ we have $y_{\phi(c)} = y_{\phi(a)} + y_{\phi(b)}$, as desired. Let $P = \{a, b\}$ and $Q = \{c\}$.

We have
\[ \mathrm{Conf}_{\mathbb S^2}^P(\mathcal{L}) = \left\{\psi \in (\mathbb{S}^2 \cap (Oxy))^P ~ \middle\arrowvert ~ y_{\phi(a)} + y_{\phi(b)} \in \left[-1, 1\right] \right\} . \]

\subsection{The multiplier}
Identify the plane $(Oxy)$ with the complex plane: to a point $(x, y, 0) \in \mathbb{R}^3$, associate the complex number $\zeta_{(x, y, 0)} = x + iy$. We want to construct a functional linkage which takes two complex numbers and returns their product.

Since we work in the sphere, we only need to multiply complex numbers $\alpha_1$ and $\alpha_2$ in the unit circle. This corresponds to adding the arguments. We split this operation into two steps:
\begin{enumerate}
\item Compute $\frac{\arg(\alpha_1) + \arg(\alpha_2)}{2} \text{ mod } \pi$;
\item Double the argument.
\end{enumerate}

The following linkage $\mathcal{L}_1$ will be the basis of the construction:

\begin{center}
\shorthandoff{:}
\begin{tikzpicture}[scale=2]
\node[fixed, label=left:$h_1$] (h) at (0,0) {};
\node[fixed, label=below:$g_1$] (g) at (2,1.5) {};
\node[vertex, label=above:$d_1$] (d) at (2,0.5) {};
\node[input, label=above:$a_1$] (a) at (2,-0.5) {};
\node[input, label=above:$b_1$] (b) at (2,-1.5) {};
\draw (h) to[bend left] node[edge] {$\pi/2$} (g);
\draw (h) to[bend left] node[edge] {$\pi/2$} (d);
\draw (h) to[bend right] node[edge] {$\pi/2$} (a);
\draw (h) to[bend right] node[edge] {$\pi/2$} (b);
\draw (a) to[bend right] node[edge] {$\pi/2$} (d);
\end{tikzpicture}
\shorthandon{:}
\end{center}

$P_1 = \{a_1, b_1\}, Q = \emptyset$, $F_1 = \{g_1, h_1\}$, $\phi_{01}(g_1) = (1, 0, 0)$, $\phi_{01}(h_1) = (0, 0, 1)$.

We have $\mathrm{Conf}_{\mathbb S^2}^{P_1}(\mathcal{L}_1) = (\mathbb{S}^2)^{P_1}$.

Take two copies $\mathcal{L}_2$ and $\mathcal{L}_3$ of the symmetrizer. Let $W_1 = \{a_1, b_1\}$, $\beta_1(a_1) = a_2$, $\beta_1(b_1) = b_2$, and $\mathcal L_5 = \mathcal L_1 \cup_{\beta_1} \mathcal L_2$. Then let $W_5 = \{a_2, g_1\}$, $\beta_5(a_2) = a_3$, $\beta_5(g_1) = b_3$, $\mathcal{L}_4 = \mathcal L_5 \cup_{\beta_5} \mathcal L_3$. We write $a_4 := a_3$, $b_4 := b_2$, $c_4 := c_2$, $d_4 := d_1$, $f_4 := c_3$, $g_4 := b_3$.

Now for all $\phi \in \mathrm{Conf}_{\mathbb S^2}(\mathcal L_4)$, $\phi(c_4)$ is symmetric to $\phi(b_4)$ with respect to $\phi(a_4)^\bot$, and $\phi(f_4)$ is symmetric to $\phi(g_4)$ with respect to $\phi(a_4)^\bot$. In other words, \[ \arg \zeta_{\phi(d_4)} = \frac{\arg(\zeta_{\phi(b_4)}) + \arg(\zeta_{\phi(c_4)})}{2} \text{ mod } \pi \] and \[ \arg \zeta_{\phi(f_4)} = 2 \arg \zeta_{\phi(d_4)} \text{ mod } 2\pi. \]

Let $\mathcal{L}_6$ be the linkage $\mathcal{L}_4$ with the input set $P_6 = \{ b_4, c_4 \}$. We have $\mathrm{Conf}_{\mathbb S^2}^{P_6}(\mathcal{L}_6) = (\mathbb{S}^2)^{P_6}$.

Taking $Q_6 = \{f_4\}$, $\mathcal{L}_6$ becomes a functional linkage for multiplication.

\subsection{The polynomial linkage}
Let $f : \mathbb{R}^n \to \mathbb{R}^m$ be a polynomial. Our aim is now to construct a linkage with $n$ inputs $a_1, \dots, a_n$, such that:

\[ \mathrm{Conf}_{\mathbb S^2}^P(\mathcal{L}) = \left\{ \psi \in (\mathbb{S}^2 \cap (Oxy))^P ~ \middle\arrowvert ~ f(y_{\psi(a_1)}, \dots, y_{\psi(a_n)}) = 0 \right\}. \]

Let us assume first that $m = 1$.

Recall that we write $\zeta_{\psi(a_k)} = x_{\psi(a_k)} + iy_{\psi(a_k)}$. We can also write:
\[ y_{\psi(a_k)} = \frac{\zeta_{\psi(a_k)} - \overline{\zeta_{\psi(a_k)}}}{2i} \].

Thus, there exists a polynomial $g : \mathbb{C}^{2n} \to \mathbb{C}$ such that for all $\psi \in (\mathbb{S}^2 \cap (Oxy))^P$:
\[ g(\zeta_{\psi(a_1)}, \overline{\zeta_{\psi(a_1)}}, \dots, \zeta_{\psi(a_n)}, \overline{\zeta_{\psi(a_n)}}) = f(y_{\psi(a_1)}, \dots, y_{\psi(a_n)}). \]

We write
\[ g = \sum_{j=1}^{r} g_j \]
where each $g_j$ is a monomial:
\small \[ g_j(\zeta_{\psi(a_1)}, \overline{\zeta_{\psi(a_1)}}, \dots, \zeta_{\psi(a_n)}, \overline{\zeta_{\psi(a_n)}}) = \lambda_j \epsilon_j (\zeta_{\psi(a_1)})^{\gamma_{j, 1}} (\overline{\zeta_{\psi(a_1)}})^{\gamma_{j, 2}} \cdots (\zeta_{\psi(a_n)})^{\gamma_{j, 2n-1}} (\overline{\zeta_{\psi(a_n)}})^{\gamma_{j, 2n}} \]
\normalsize with $\epsilon_j \in \{1, i, -1, -i\}$ and $\lambda_j$ a positive real number.

Observe that without changing the locus \[ \left\{ \psi \in (\mathbb{S}^2 \cap (Oxy))^P ~ \middle\arrowvert ~ f(y_{\psi(a_1)}, \dots, y_{\psi(a_n)}) = 0 \right\}, \] one may assume $\lambda_j < \lambda_0$ for all $j$, where $\lambda_0$ is arbitrary in $(0,1)$ (if necessary, multiply $f$ by a small constant).

We are now ready to construct the linkage. Start with a fixed great circle linkage $\mathcal{L}$ which forces all the $a_k$ to move in the plane $(Oxy)$. Use symmetrizers to extend $\mathcal{L}$ to a new linkage with vertices $a_k'$ such that for all $k \in \{1, \dots, n\}$, \[ \zeta_{\phi(a_k')} = \overline{\zeta_{\phi(a_k)}}. \]

For each $j \in \{1, \dots, r\}$:

\begin{enumerate}
\item Use multipliers to extend $\mathcal{L}$ to a new linkage with a vertex $c_j$ such that for all $\phi \in \mathrm{Conf}_{\mathbb S^2}(\mathcal{L})$,
\[ \zeta_{\phi(c_j)} = (\zeta_{\psi(a_1)})^{\gamma_{j, 1}} (\overline{\zeta_{\psi(a_1)}})^{\gamma_{j, 2}} \cdots (\zeta_{\psi(a_n)})^{\gamma_{j, 2n-1}} (\overline{\zeta_{\psi(a_n)}})^{\gamma_{j, 2n}}. \]
\item Use a multiplier to extend the linkage to a new one with a vertex $d_j$ such that:
\[ \zeta_{\phi(d_j)} = i \epsilon_j \zeta_{\phi(c_j)}. \]
\item Use a homothety linkage to extend the linkage to a new one with a vertex $b_j$ such that:
\[ y_{\phi(b_j)} = \lambda_j y_{\phi(d_j)}. \]
\end{enumerate}

Thus we have for all $\phi \in \mathrm{Conf}_{\mathbb S^2}(\mathcal{L})$:
\[ y_{\phi(b_j)} = \Im (i g_j (\zeta_{\psi(a_1)}, \overline{\zeta_{\psi(a_1)}}, \dots, \zeta_{\psi(a_n)}, \overline{\zeta_{\psi(a_n)}})). \]

Then, use several adders to extend the linkage to a new one, still called $\mathcal{L}$, with a vertex $c$ such that for all $\phi \in \mathrm{Conf}_{\mathbb S^2}(\mathcal{L})$:
\[ y_{\phi(c)} = \sum_{j=1}^{r} y_{\phi(b_j)}. \]

Thus
\[ y_{\phi(c)} = \text{Im} (i g (\zeta_{\psi(a_1)}, \overline{\zeta_{\psi(a_1)}}, \dots, \zeta_{\psi(a_n)}, \overline{\zeta_{\psi(a_n)}})), \]
which means that
\[ y_{\phi(c)} = f(y_{\psi(a_1)}, \dots, y_{\psi(a_n)}). \]

Choose $\lambda_0$ so small that all the steps of the computation remain in $[-1, 1]$. Then:
\[ \mathrm{Conf}_{\mathbb S^2}^P(\mathcal{L}) = (\mathbb{S}^2 \cap (Oxy))^{P}. \]

Finally, if $m \geq 2$, just write $f = (f_1, \dots, f_m)$ and use $m$ linkages like above.

\section{End of the proof of Theorem~\refkempesphere~for $d = 2$}

First, we assume that $A$ is an \emph{algebraic} subset of $(\mathbb{S}^2)^n$. Let $f: (\mathbb{R}^3)^n = \mathbb{R}^{3n} \to \mathbb{R}^m$ be a polynomial function such that $A = f^{-1}(0)$.

\begin{enumerate}
\item Take a polynomial linkage $\mathcal{L}$ with inputs $a_1, \dots, a_{3n}$ such that:
\[ \mathrm{Conf}_{\mathbb S^2}^P(\mathcal{L}) = \left\{ \psi \in (\mathbb{S}^2 \cap (Oxy))^P ~ \middle\arrowvert ~ f(y_{\psi(a_1)}, \dots, y_{\psi(a_{3n})}) = 0 \right\}. \]
\item With several symmetrizers, extend this linkage to a new one with vertices $b_1$, $b_4$, $b_7$, $\dots$, $b_{3n-2}$ such that for all $\phi \in \mathrm{Conf}_{\mathbb S^2}(\mathcal{L})$ and for all $k \in \{1, \dots, n\}$:
\[ x_{\phi(b_{3k-2})} = y_{\phi(a_{3k-2})}. \]
\item With several symmetrizers, extend this linkage to a new one with vertices $c_3$, $c_6$, $c_9$, $\dots$, $c_{3n}$ such that for all $\phi \in \mathrm{Conf}_{\mathbb S^2}(\mathcal{L})$ and for all $k \in \{1, \dots, n\}$:
\[ z_{\phi(c_{3k})} = y_{\phi(a_{3k})}. \]
\item With several parallelizers, extend this linkage to a new one with vertices $d_3$, $d_6$, $d_9$, $\dots$, $d_{3n}$ such that for all $\phi \in \mathrm{Conf}_{\mathbb S^2}(\mathcal{L})$ and for all $k \in \{1, \dots, n\}$:
\[ x_{\phi(d_{3k})} = x_{\phi(b_{3k-2})} ; \]
\[ y_{\phi(d_{3k})} = y_{\phi(a_{3k-1})} ; \]
\[ z_{\phi(d_{3k})} = z_{\phi(c_{3k})}. \]
\end{enumerate}

Now, let $P = \{d_3, d_6, \dots, d_{3n}\}$. We have:
\[ \mathrm{Conf}_{\mathbb S^2}^P(\mathcal{L}) = f^{-1}(0) = A. \]

If $A$ is only a compact semi-algebraic subset of $(\mathbb{S}^2)^n$, we know from Proposition~\ref{projAlgSph} that $A$ is the projection onto the first coordinates of an algebraic subset $B$ of the sphere: apply the above construction to $B$ and remove some vertices from the input set to obtain $\mathrm{Conf}_{\mathbb{S}^2}^P(\mathcal{L}) = A$; thus, Theorem~\ref{kempe-sphere} is proved.

\section{Higher dimensions}

In this section, we fix a number $d \geq 2$ and consider realizations in the sphere $\mathbb{S}^d$.

\subsection{The 3-plane linkage}
This linkage forces several points to move in the same (linear) $3$-plane. It is to be compared with the ``great circle linkage" described in section~\ref{gcl}, which forces several points to move in the same (linear) $2$-plane.

There are $k$ inputs $v_1, \dots, v_k$, and $d-2$ other vertices $a_1, \dots, a_{d-2}$. For all $i, j \in \{1, \dots, d-2\}$, there is an edge $a_ia_j$ of length $\pi/2$. For all $i \in \{1, \dots, d-2\}$ and $l \in \{1, \dots, k\}$, there is an edge $a_iv_l$ of length $\pi/2$.

Here is an example with $d = 5$ and $k = 3$.

\begin{center}
\shorthandoff{:}
\begin{tikzpicture}[scale=2]
\node[vertex, label=above:$a_1$] (a1) at (-1, 1) {};
\node[vertex, label=left:$a_2$] (a2) at (0,0) {};
\node[vertex, label=below:$a_3$] (a3) at (-1,-1) {};
\node[input, label=left:$v_1$] (v1) at (2,1) {};
\node[input, label=left:$v_2$] (v2) at (2,0) {};
\node[input, label=left:$v_3$] (v3) at (2,-1) {};
\draw (a1) to[bend left] (a2) to[bend left] (a3) to[bend left] (a1);
\draw (a1) to[bend left] (v1);
\draw (a1) to (v2);
\draw (a1) to[bend right=50] (v3);
\draw (a2) to[bend left] (v1);
\draw (a2) to (v2);
\draw (a2) to[bend right] (v3);
\draw (a3) to[bend left=50] (v1);
\draw (a3) to(v2);
\draw (a3) to[bend right] (v3);
\end{tikzpicture}
\shorthandon{:}
\end{center}

\begin{prop}
We have $\mathrm{Conf}_{\mathbb S^d}^P(\mathcal{L}) = E$, where
\[ E = \left\{ \psi \in (\mathbb{S}^d)^{P} ~ \middle\arrowvert ~ \exists \mathcal{F} \text{ subspace of } \mathbb{R}^{d+1}, \dim \mathcal{F} = 3, \forall i \in \{1, \dots, k\} ~~ \psi(v_k) \in \mathcal{F} \right\}. \]
\end{prop}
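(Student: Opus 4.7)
The key observation is that an edge of length $\pi/2$ in $\mathbb{S}^d$ encodes orthogonality in $\mathbb{R}^{d+1}$: namely, $\delta(\alpha,\beta)=\pi/2$ if and only if the unit vectors $\alpha,\beta$ are orthogonal with respect to the standard inner product. Thus a realization $\phi$ of $\mathcal L$ is exactly the data of $k+(d-2)$ unit vectors $\phi(v_1),\dots,\phi(v_k),\phi(a_1),\dots,\phi(a_{d-2})\in\mathbb R^{d+1}$ such that $\phi(a_i)\perp\phi(a_j)$ for all $i\neq j$ and $\phi(a_i)\perp\phi(v_l)$ for all $i,l$.

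For the inclusion $\mathrm{Conf}_{\mathbb S^d}^P(\mathcal L)\subseteq E$, take any realization $\phi$. The vectors $\phi(a_1),\dots,\phi(a_{d-2})$ form an orthonormal system in $\mathbb R^{d+1}$, so they span a subspace $\mathcal A$ of dimension exactly $d-2$, and $\mathcal F:=\mathcal A^\perp$ has dimension $3$. Each $\phi(v_l)$ is orthogonal to every $\phi(a_i)$, hence lies in $\mathcal F$. Restricting to the inputs, we get $\phi|_P\in E$.

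For the reverse inclusion $E\subseteq\mathrm{Conf}_{\mathbb S^d}^P(\mathcal L)$, take $\psi\in E$ and choose a $3$-dimensional subspace $\mathcal F\subseteq\mathbb R^{d+1}$ containing $\psi(v_1),\dots,\psi(v_k)$. Its orthogonal complement $\mathcal F^\perp$ has dimension $d-2$, so we may pick any orthonormal basis $e_1,\dots,e_{d-2}$ of $\mathcal F^\perp$ and define $\phi(a_i):=e_i$. Setting $\phi|_P=\psi$, every edge condition of $\mathcal L$ is satisfied, so $\phi$ is a realization extending $\psi$; therefore $\psi\in\mathrm{Conf}_{\mathbb S^d}^P(\mathcal L)$.

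There is no real obstacle here: the argument reduces immediately to linear algebra once one translates ``edge of length $\pi/2$'' into ``orthogonal unit vectors''. The only minor subtlety is ensuring that the $\phi(a_i)$ really form a basis of a $(d-2)$-plane (i.e.\ that they are linearly independent), but this is automatic from pairwise orthogonality of nonzero vectors.
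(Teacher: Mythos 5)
Your proof is correct and follows essentially the same argument as the paper: translate the length-$\pi/2$ edges into orthogonality, take $\mathcal F$ to be the orthogonal complement of the span of the $\phi(a_i)$ for one inclusion, and pick an orthonormal set in $\mathcal F^\perp$ for the other.
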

\begin{proof}[Proof]
First, we prove that $\mathrm{Conf}_{\mathbb S^d}^P(\mathcal{L}) \subseteq E$.
Let $\psi \in \mathrm{Conf}_{\mathbb S^d}^P(\mathcal{L})$ and $\phi \in p^{-1}(\psi)$. Let
\[ \mathcal{F} = \bigcap_{1 \leq i \leq d-2} \phi(a_i)^\bot. \]
We know that $\{\phi(a_1), \dots, \phi(a_{d-2})\}$ is an orthonormal set, so $\dim \mathcal{F} = 3$.
Moreover, for all $l \in \{1, \dots, k\}$, $\psi(v_l) \in \mathcal{F}$.

Now, we prove that $E \subseteq \mathrm{Conf}_{\mathbb S^d}^P(\mathcal{L})$. Let $\psi \in E$. Let $\mathcal{F}$ be a subspace of $\mathbb{R}^{d+1}$ with $\dim \mathcal{F} = 3$ containing $\psi(v_l)$ for $l \in \{1, \dots, k\}$. Construct $\phi \in (\mathbb{S}^d)^P$ by letting $\{\phi(a_1), \dots, \phi(a_{d-2})\} \subseteq \mathcal{F}^\bot$ be an orthonormal set and let $\phi|_P = \psi$. Then $\phi \in \mathrm{Conf}_{\mathbb S^d}(\mathcal{L})$ so $\psi \in \mathrm{Conf}_{\mathbb S^d}^P(\mathcal{L})$.
\end{proof}

The \emph{fixed 3-plane linkage} is a variant in which $a_1, \dots, a_{d-2} \in F$ (namely, they are fixed vertices). Then there exists $\mathcal{F}$ a subspace of $\mathbb{R}^{d+1}$ with $\dim \mathcal{F} = 3$ and
\[ \mathrm{Conf}_{\mathbb S^d}^P(\mathcal{L}) = \left\{ \psi \in (\mathbb{S}^2)^{P} ~ \middle\arrowvert ~ \forall i \in \{1, \dots, k\} ~~ \psi(v_k) \in \mathcal{F} \right\}. \]

\subsection{The $d$-dimensional symmetrizer}
Like in the $2$-dimensional case, the $d$-dimensional symmetrizer has two inputs $a$ and $b$, and one output $c$. It is a functional linkage for symmetry: for all $\phi \in \mathrm{Conf}_{\mathbb S^d} (\mathcal{L})$, $\phi(c)$ is symmetric to $\phi(b)$ with respect to $\phi(a)^\bot$. The idea is that the symmetry takes place in a $3$-plane containing $\phi(a)$, $\phi(b)$ and $\phi(c)$.

Let $\mathcal{L}_1$ be a classical symmetrizer and $\mathcal{L}_2$ be a $3$-plane linkage, with $k = \mathrm{card} (V_1)$. Let $W_1 = V_1$, $\beta$ a bijection between $V_1$ and $\{v_1, \dots, v_k\} ~ (\subseteq V_2)$, and $\mathcal{L} = \mathcal{L}_1 \cup_\beta \mathcal{L}_2$. Letting $a := \beta(a_1)$, $b := \beta(b_1)$, $c := \beta(c_1)$, we obtain as desired:
\[ \mathrm{Conf}_{\mathbb S^d}(\mathcal{L}) = \left\{ \phi \in (\mathbb{S}^d)^V ~ \middle\arrowvert ~ \phi(c) \text{ is symmetric to } \phi(b) \text{ with respect to } \phi(a)^\bot \right\}. \]

\subsection{The $d$-dimensional parallelizer}
Like in the $2$-dimensional case, the $d$-dimensional parallelizer forces two points to have the same scalar product with a third one. We restrict the vertices of a classical parallelizer to move on a $3$-plane containing its three inputs.

Let $\mathcal{L}_1$ be a classical parallelizer and $\mathcal{L}_2$ be a $3$-plane linkage, with $k = \mathrm{card} (V_1)$. Let $W_1 = V_1$, $\beta$ a bijection between $V_1$ and $\{v_1, \dots, v_k\} ~ (\subseteq V_2)$, and $\mathcal{L} = \mathcal{L}_1 \cup_\beta \mathcal{L}_2$. Let $h := \beta(h_1)$, $b := \beta(b_1)$, $c := \beta(c_1)$. Then we obtain as desired:
\[ \mathrm{Conf}_{\mathbb S^d}^P(\mathcal{L}) = \left\{ \psi \in (\mathbb{S}^d)^P ~ \middle\arrowvert ~ \delta(\psi(h),\psi(b)) = \delta(\psi(h), \psi(c)) \right\}. \]

\subsection{End of the proof of Theorem~\refkempesphere~for $d \geq 2$}
Here, we prove the algebraic universality in $\mathbb S^d$. The proof is similar to the case $d = 2$. There are only two differences.
\begin{enumerate}
\item The polynomial linkage $\mathcal{L}$ is attached to a fixed $3$-plane linkage.
\item We use $d$-dimensional symmetrizers and $d$-dimensional parallelizers.
\end{enumerate}

\bigskip

\noindent \textbf{Acknowledgements.} This paper corresponds to Part I of my PhD thesis: I would like to thank my advisor Abdelghani Zeghib for his help.

\bibliographystyle{alpha}
\bibliography{linkages}

\end{document}